\newtheorem{theorem}{Theorem}
\newtheorem{proposition}{Proposition}
\newtheorem{corollary}{Corollary}
\newtheorem{lemma}{Lemma}
\newtheorem{definition}{Definition}
\newtheorem{remark}{Remark}
\newcommand{\D}{\partial}
\newcommand{\eps}{\varepsilon}
\newcommand{\R}{\mathbb{R}}
\newcommand{\la}{\lambda}
\newenvironment{manualtheorem}[1]{%
  \manualtheoreminner
}{\endmanualtheoreminner}
\begin{document}

\title{Non-existence of patterns and gradient estimates}

\author{Samuel Nordmann\thanks{samnordmann@gmail.com ; School of Mathematical Sciences, Tel Aviv University.
}}



\maketitle
\begin{abstract}
We call pattern any non-constant solution of a semilinear elliptic equation with Neumann boundary conditions.
A classical theorem of Casten, Holland \cite{Casten1978a} and Matano \cite{Matano1979a} states that stable patterns do not exist in convex domains. 
In this article, we show that the assumptions of \emph{convexity of the domain} and \emph{stability of the pattern} in this theorem can be relaxed in several directions.
In particular, we propose a general criterion for the non-existence of patterns, dealing with possibly non-convex domains and unstable patterns.
Our results unfold the interplay between the geometry of the domain, the magnitude of the nonlinearity, and the stability of patterns.

In addition, we establish several gradient estimates for the patterns of~\eqref{ANG_Intro_EquationSemilineaire}. We prove a general nonlinear Cacciopoli inequality (or an inverse Poincaré inequality), stating that the $L^2$-norm of the gradient of a solution is controlled by the $L^2$-norm of $f(u)$, with a constant that only depends on the domain. 
This inequality holds for non-homogeneous equations.
We also give several flatness estimates.

Our approach relies on the introduction of what we call the \emph{Robin-curvature Laplacian}. This operator is intrinsic to the domain and contains much information on how the geometry of the domain affects the shape of the solutions.

Finally, we extend our results to unbounded domains. It allows us to improve the results of our previous paper~\cite{Nordmann2019a} and to extend some results on De Giorgi's conjecture to a larger class of domains.

\end{abstract}
\paragraph{Keywords:} Stability ; Cacciopoli Inequality ; De Giorgi's conjecture ; Liouville type results; Robin-curvature Laplacian ; Flatness Estimate ; Spectral Gap ; Semilinear elliptic equations ; Symmetry ; Neumann boundary conditions ; Indefinite Robin Boundary Conditions\\

\noindent {\bf AMS Class. No:} 35B35, 35B06, 35J15, 35J61, 35B53.
\paragraph{Acknowledgement.}
The author is deeply thankful to Professor Henri Berestycki for many very instructive discussions.\\
The research leading to these results has received funding from the European
Research Council under the European Union's Seventh Framework Programme
(FP/2007-2013) / ERC Grant Agreement n.321186 - ReaDi -ReactionDiffusion
Equations, Propagation and Modelling held by Henri Berestycki.

\section{Introduction}

We call \emph{pattern} any non-constant solution of the following equation
\begin{equation}\label{ANG_Intro_EquationSemilineaire}
    \left\{
    \begin{aligned}
        &-\Delta u(x)=f(u(x)) &&\forall x\in\Omega,\\ 
        &\D_\nu u(x)=0 &&\forall x\in\D\Omega,\\
    \end{aligned}
    \right.
\end{equation}
where the domain $\Omega\subset\R^n$ ($n\geq2$) is bounded and smooth (say, $C^2$), $\D_\nu$ is the outward normal derivative, and the nonlinearity $f$ is $C^{1}$. 
We sometimes consider the non-homogeneous case, that is, we sometimes allow $f=f(x,u)$ to depend on $x\in\Omega$.

Note that if $z\in\R$ is a root of $f$, then $z$ is a solution of the above equation (but it is not a pattern).


\paragraph*{}
We are interested in the existence/non-existence and gradient estimates of patterns.
Our starting point is a celebrated theorem proved independently by Casten, Holland~\cite{Casten1978a}, and by Matano~\cite{Matano1979a}. 
\begin{manualtheorem}{CHM}[\cite{Casten1978a,Matano1979a}]\label{ANG_th:Intro_CHM}
If the domain is convex, there exists no stable pattern to~\eqref{ANG_Intro_EquationSemilineaire}.
\end{manualtheorem}
Here, a solution is said to be \emph{stable} if the second variation of energy is nonnegative~(see \autoref{def:stability} below).

\autoref{ANG_th:Intro_CHM} states that, if the domain is convex, any stable solution is constant.
This result has many consequences on the classification of solutions, the asymptotics of the associated evolution problem, the construction of traveling fronts~\cite{Berestycki1992}, and has also many applications in chemistry, population dynamics, etc.
\autoref{ANG_th:Intro_CHM} has been extended in several directions: certain non-linear elliptic operators~\cite{Dipierro2018,Ciraolo2020}, Robin boundary conditions~\cite{Bandle2016,Dipierro2018}, manifolds~\cite{Jimbo1984,Bandle2012,Dipierro2018a}, unbounded convex domains~\cite{Nordmann2019a} and some systems~\cite{Jimbo1994,Yanagida2002}. See also~\cite{Dipierro2016}.
We give in Section~\ref{sec:Literature} a more detailed presentation of the literature on this topic.



\paragraph*{}
The present article aims to investigate whether the assumptions of \emph{convexity of the domain} and \emph{stability of the solution} can be relaxed in~\autoref{ANG_th:Intro_CHM}. To our knowledge, the literature does not contain any result in this direction. In particular, no sufficient condition is known for the non-existence of patterns in \emph{non-convex domains}.

Apparently, the assumptions of \autoref{ANG_th:Intro_CHM} are sharp in general.
On the one hand, the conclusion of \autoref{ANG_th:Intro_CHM} does not hold for unstable patterns, since $u(x)=\cos(x)$ is an unstable pattern of~\eqref{ANG_Intro_EquationSemilineaire} for $\Omega=x\in[0,\pi]$ and $f(u)=u$.
On the other hand, we cannot relax the assumption of convexity of the domain, since a result of Kohn and Sternberg~\cite{Kohn1989} allows constructing stable patterns in some star-shaped domains obtained as a small perturbation of a convex domain (such a counter-example can also be achieved in dimension $n\geq 3$ for a domain with positive mean curvature, see Section~\ref{sec:Literature} for more details).

Nevertheless, when the domain is almost convex, the above construction of a stable pattern needs the nonlinearity to have a large magnitude. This suggests that, in non-convex domains, the non-existence of patterns involves both the geometry of the domain and the magnitude of the nonlinearity.

\paragraph*{}
In this paper, we improve and extend~\autoref{ANG_th:Intro_CHM} in several directions. First, we show that the two assumptions of \emph{convexity of the domain} and \emph{stability of the patterns} can be combined in a single assumption on the spectrum of a certain operator (\autoref{ANG_Intro_th:GammaStability}). Then, we propose a general criterion for the non-existence of patterns (\autoref{th:CoroInstablePRELIMINARY}), which deals with possibly non-convex domains and possibly unstable patterns. 
To the best of our knowledge, this is the first result of this type.
This criterion unfolds the interplay between the geometry of the domain, the magnitude of the nonlinearity, and the stability of patterns.
We propose several applications, in particular, we prove that (under a geometric assumption) there exists no patterns if the domain is shrunk or if the nonlinearity has a small magnitude (\autoref{Coro_Shrinking}). As another consequence of our results, we prove that \autoref{ANG_th:Intro_CHM} is robust under smooth perturbations of the domain and the nonlinearity (\autoref{th:CoroPerturbation}). We also improve the conclusion of \autoref{ANG_th:Intro_CHM} by providing explicit sufficient conditions for the non-existence of unstable solutions in convex domains (\autoref{th:ApplicationConvex}).

In addition, we establish several gradient estimates for the patterns of~\eqref{ANG_Intro_EquationSemilineaire}. We prove a general nonlinear Cacciopoli inequality (or an inverse Poincaré inequality), stating that the $L^2$-norm of the gradient of a solution is controlled by the $L^2$-norm of $f(u)$, with a constant that only depends on the domain (\autoref{th:CoroStable}). 
This inequality holds for non-homogeneous equations (i.e., when $f=f(x,u)$ depends on $x\in\Omega$).
To the best of our knowledge, this result is completely new.
Also, we give several flatness estimates in term of the spectral gap of some geometric operator (\autoref{th:FlatnessEstimateRobinCurvatureCoro}) or the Morse index of the solution (\autoref{th:FlatnessEstimateRobinCurvature}).


Finally, we extend our main results to unbounded domains. This question involves additional difficulties and is closely linked to De Giorgi's conjecture. We recover and improve the results of our previous paper~\cite{Nordmann2019a} and extend some results on De Giorgi's conjecture to a larger class of domains.

\paragraph*{}
For simplicity, we only consider classical solutions of~\eqref{ANG_Intro_EquationSemilineaire}, however, our method relies on variational arguments and therefore most of our results hold for weak $H^1$ solutions.

We point out that the conclusion of \autoref{ANG_th:Intro_CHM} holds for any $C^1$ nonlinearity. Throughout the paper, we shall stick to this general context by making no further assumption on the nonlinearity.
However, let us mention that \autoref{ANG_th:Intro_CHM} (and subsequently our results) can be improved if the nonlinearity satisfies certain additional properties:
\begin{itemize}
\item  if $f$ is convex or concave, there exists no stable pattern in any (possibly non-convex) domain~\cite{Casten1978a}.
\item if $f'\leq0$ (resp. $f'>0$), any solution of~\eqref{ANG_Intro_EquationSemilineaire} is stable (resp. unstable), see \autoref{def:stability} below.
\item if $f\geq0$ or $f\leq 0$, there exists no (possibly unstable) patterns. This can be seen from integrating the equation~\eqref{ANG_Intro_EquationSemilineaire}, which implies $\int_\Omega f=0$, and so $f(u(x))=0$.
\item  if $\frac{f(u)}{u^{p^\star-1}}$ is non increasing, there exist no (possiby unstable) patterns in convex domains~\cite{Ciraolo2020}.
\end{itemize}

In our context, bistable nonlinearities (e.g. $f(u)=-(u-a)(u-b)(u-c)$, with $a<b<c$) are especially interesting since they favor the existence of stable patterns, in particular, the Allen-Cahn nonlinearity $f(u)=u-u^3$ which is balanced between its two stable roots (i.e. $\int_{-1}^{1} f=0$).

\paragraph*{}

In our approach, a key role is played by what we call the \emph{Robin-curvature Laplacian}.
\begin{definition}[Robin-curvature Laplacian]\label{Def:CurvatureGamma}
We denote by $\gamma(x)$ the lowest principal curvature of the domain at $x\in\D\Omega$ (i.e., $\gamma(x)$ is the lowest eigenvalue of the second fundamental form).  In dimension $n=2$, then $\gamma(\cdot)$ is nothing but the curvature of $\D\Omega$. 

We define the Robin-curvature Laplacian as the Laplace operator acting on $\psi\in C^2(\overline{\Omega})$ associated with the boundary conditions
\begin{equation}
\D_\nu\psi+\gamma \psi=0.
\end{equation}
We also define its lowest eigenvalue
\begin{equation}
\mu_{\gamma}:=\inf\limits_{\substack{\psi\in H^1\\ \Vert \psi\Vert_{{L}^2}=1}} \mathcal{G}_\gamma(\psi):=\inf\limits_{\substack{\psi\in H^1\\ \Vert \psi\Vert_{{L}^2}=1}}\int_\Omega \vert \nabla\psi\vert^2+\int_{\D\Omega}\gamma\psi^2,
\end{equation}
and more generally, for $a\geq0$,
\begin{equation}
\mu_{a\gamma}:=\inf\limits_{\substack{\psi\in H^1\\ \Vert \psi\Vert_{{L}^2}=1}} \mathcal{G}_{a\gamma}(\psi):=\inf\limits_{\substack{\psi\in H^1\\ \Vert \psi\Vert_{{L}^2}=1}}\int_\Omega \vert \nabla\psi\vert^2+a\int_{\D\Omega}\gamma\psi^2.
\end{equation}
\end{definition}
We emphasize that the Robin-curvature Laplacian is intrinsic to the domain and does not involve the nonlinearity. To the best of our knowledge, this operator has not been considered before in the literature. 
Note that $\gamma\geq0$ if and only if $\Omega$ is convex. Thus, if $\Omega$ is convex, the Robin-curvature Laplacian involves classical Robin boundary conditions with a nonnegative $\gamma$. If $\Omega$ is not convex, then $\gamma$ changes sign and the Robin boundary conditions are \emph{indefinite}. As a consequence of the work of Daners~\cite{Daners2013}, the Robin-curvature Laplacian satisfies most of the classical properties known for the \emph{definite Robin Laplacian}. In particular, its principal eigenvalue $\mu_\gamma$ is well defined and finite. Let us point out that the term $\int_{\D\Omega} \gamma\psi^2$ could be interpreted as the surface tension of a body $\Omega$ with rigidity on its surface.

Our results reveal that the Robin-curvature Laplacian contains much information on how the geometry of the domain affects the shape of the solutions of~\eqref{ANG_Intro_EquationSemilineaire}. An in-depth study of this operator could lead to new results.

\paragraph*{Outline.}
The results are presented in Section~\ref{sec:MainResults}. In Section~\ref{sec:Literature}, we discuss the existing literature and the implications of our results. The remaining of the paper is devoted to the proofs of our results. In Section~\ref{sec:Proofs_Non-existence}, we prove the results dealing with the non-existence of patterns (\autoref{ANG_Intro_th:GammaStability}, \autoref{th:CoroPerturbation} and \autoref{th:CoroInstablePRELIMINARY}). Section~\ref{sec:Cacciopoli} is devoted to the proof of the nonlinear Cacciopoli inequality (\autoref{th:CoroStable}). In Section~\ref{sec:flatnessEstimates}, we prove the flatness estimates (\autoref{th:FlatnessEstimateRobinCurvatureCoro} and \autoref{th:FlatnessEstimateRobinCurvature}). Finally, in Section~\ref{sec:UnboundedDomains} we extend our main results to unbounded domains (\autoref{th:Unbounded_Domains}, \autoref{th:CoroInstablePRELIMINARY_Unbounded} and \autoref{th:CoroStable_Unbounded}).

\section{Presentation of the results}\label{sec:MainResults}

\subsection{Non-existence of patterns}
The first step in our approach relies in the observation that the two assumptions of~\autoref{ANG_th:Intro_CHM} (that is, the stability of the solution and the convexity of the domain) can be combined in a single assumption on what we call the \emph{Robin-curvature-stability} of the solution.

Before presenting our first result, let us define precisely the \emph{stability} of a solution.
\begin{definition}[Stability]\label{def:stability}
A solution $u$ of~\eqref{ANG_Intro_EquationSemilineaire} is said to be {stable} if the second variation of energy at $u$ is nonnegative, namely, if
\begin{equation}\label{DefLambda}
\la_0:=\inf\limits_{\substack{\psi\in H^1\\ \Vert \psi\Vert_{{L}^2}=1}} \mathcal{F}_0(\psi):= \inf\limits_{\substack{\psi\in H^1\\ \Vert \psi\Vert_{{L}^2}=1}} \int_\Omega \vert \nabla \psi\vert^2-\int_\Omega f'(u)\psi^2\geq 0.
\end{equation}
\end{definition}
Note that $\lambda_0$ is the lowest eigenvalue of the linearized of operator of~\eqref{ANG_Intro_EquationSemilineaire} at $u$.
According to this definition, any local minimum or degenerate critical point of the energy is {stable}. Therefore, the set of stable solutions contains all the steady states obtained as a (strict) limit of an evolution problem.

Let us now define what we call the \emph{Robin-curvature-stability} of a solution.
\begin{definition}[Robin-curvature-stability]\label{Definition_Lambda_Gamma}
Let $u$ be a solution of~\eqref{ANG_Intro_EquationSemilineaire}. Set
\begin{equation}\label{ANG_Intro_DefLambdaGamma}
\la_\gamma:=\inf\limits_{\substack{\psi\in H^1(\Omega)\\                                                                                                                                                                                                                                                                                                                                                                                                          \Vert \psi\Vert_{{L}^2}=1}}\mathcal{F}_\gamma(\psi):=\inf\limits_{\substack{\psi\in H^1(\Omega)\\                                                                                                                                                                                                                                                                                                                                                                                                          \Vert \psi\Vert_{{L}^2}=1}} \int_\Omega \vert \nabla \psi\vert^2-\int_\Omega f'(u)\psi^2+\int_{\D\Omega}\gamma\psi^2,
\end{equation}
where $\gamma$ is the minimal curvature of the domain (\autoref{Def:CurvatureGamma}).
In analogy with \autoref{def:stability}, we say that $u$ is \emph{Robin-curvature-stable} if $\lambda_\gamma\geq0$.
\end{definition}
We point out that $\lambda_\gamma$ is the principal eigenvalue of the linearized operator at $u$, for which we have replaced the Neumann boundary conditions with the Robin-curvature boundary conditions $\D_\nu\psi+\gamma\psi=0$. Note that our notation is consistent with \autoref{def:stability} since $\lambda_\gamma$ coincides with $\lambda_0$ if we formally take $\gamma=0$ in the above definition. However, the Robin-curvature-stability do not refer to a usual notion of stability, because $u$ is not a critical point of the Robin-curvature energy $\mathcal{E}_\gamma(u)=\int_\Omega\frac{1}{2}\vert\nabla u \vert^2-F(u)+\int_{\D\Omega}\gamma u^2$ with $F'=f$.

We are ready to state a key observation.
\begin{theorem}\label{ANG_Intro_th:GammaStability}
There exists no Robin-curvature-stable patterns to~\eqref{ANG_Intro_EquationSemilineaire}.
\end{theorem}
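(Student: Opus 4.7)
The natural test function for $\mathcal{F}_\gamma$ is $\varphi := |\nabla u|$, which lies in $H^1(\Omega)$ since $u$ is $C^2$. The heart of the argument is Bochner's formula applied to a solution of \eqref{ANG_Intro_EquationSemilineaire}: starting from $\frac{1}{2}\Delta |\nabla u|^2 = |D^2 u|^2 + \nabla u \cdot \nabla \Delta u = |D^2 u|^2 - f'(u)|\nabla u|^2$ and integrating by parts, one obtains
\begin{equation}
\int_\Omega f'(u)\,|\nabla u|^2 \;=\; \int_\Omega |D^2 u|^2 \;+\; \int_{\partial\Omega} II(\nabla u,\nabla u),
\end{equation}
where the boundary term comes from the Neumann identity $\frac{1}{2}\partial_\nu |\nabla u|^2 = D^2 u(\nabla u,\nu) = -II(\nabla u,\nabla u)$, valid because differentiating $\partial_\nu u = 0$ tangentially shows $\nabla u$ is tangent to $\partial\Omega$.

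Plug $\varphi = |\nabla u|$ into $\mathcal{F}_\gamma$. Two inequalities combine to eliminate the interior and boundary terms. First, Kato's inequality $|\nabla \varphi|^2 \le |D^2 u|^2$ a.e.\ (a pointwise Cauchy-Schwarz on $\nabla \varphi_j = \sum_i u_i u_{ij}/|\nabla u|$) controls $\int_\Omega |\nabla\varphi|^2$ by $\int_\Omega |D^2 u|^2$. Second, since $\nabla u$ is tangent on $\partial\Omega$ and $\gamma$ is the lowest eigenvalue of the second fundamental form, $II(\nabla u,\nabla u) \ge \gamma |\nabla u|^2 = \gamma \varphi^2$. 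Substituting into the definition of $\mathcal{F}_\gamma$ and using the Bochner identity yields
\begin{equation}
\mathcal{F}_\gamma(\varphi) \;\le\; \int_\Omega |D^2 u|^2 - \int_\Omega |D^2 u|^2 - \int_{\partial\Omega} II(\nabla u,\nabla u) + \int_{\partial\Omega} \gamma \varphi^2 \;\le\; 0.
\end{equation}

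If $u$ is a pattern, then $\varphi \not\equiv 0$, so this forces $\lambda_\gamma \le 0$. To upgrade to the \emph{strict} inequality $\lambda_\gamma < 0$ needed to contradict Robin-curvature-stability, I would analyse the equality case. If $\lambda_\gamma = 0$, then $\varphi$ is a nonnegative minimiser of $\mathcal{F}_\gamma$, hence a principal eigenfunction of the operator $-\Delta - f'(u)$ with the Robin-curvature boundary condition $\partial_\nu\varphi + \gamma\varphi = 0$. Invoking the theory of (possibly indefinite) Robin Laplacians developed by Daners, which the paper already cites, the principal eigenfunction is strictly positive on $\overline\Omega$. But $\varphi = |\nabla u|$ must vanish at any extremum of $u$ on $\overline\Omega$, and such extrema exist since $u$ is nonconstant on the compact set $\overline\Omega$ (at an interior extremum $\nabla u = 0$ directly; at a boundary extremum $\nabla u = 0$ as well, since $\partial_\nu u = 0$ and the tangential derivatives of $u$ vanish there). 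This contradiction rules out $\lambda_\gamma = 0$ and completes the proof.

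\paragraph{Main obstacle.} The soft part is the Bochner computation and the tangency of $\nabla u$. The delicate step is the strict inequality: handling the case $\lambda_\gamma = 0$ relies on the strong maximum principle / Hopf lemma for the indefinite Robin-curvature Laplacian, which is precisely why the operator introduced in \autoref{Def:CurvatureGamma} (and the results of Daners invoked afterwards) plays a central role.
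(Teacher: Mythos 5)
Your proof is correct and is essentially a hybrid of the two proofs in the paper, so a brief comparison is in order. Your core computation — Bochner's formula for $|\nabla u|$, Kato's inequality $|\nabla|\nabla u||\le|D^2u|$, and the boundary bound $II(\nabla u,\nabla u)\ge\gamma|\nabla u|^2$, combined to yield $\mathcal{F}_\gamma(|\nabla u|)\le 0$ — is the same as the paper's \emph{alternative} proof in Section~\ref{sec:Proof_2} (\autoref{th:FlatnessEstimate_Poincare}), where the test function is also $v=|\nabla u|$ and the key pointwise inequality $|\nabla v|^2-\sum_i|\nabla v_i|^2\le 0$ is just Kato in disguise. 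Your closing argument, however, is taken from the paper's \emph{main} proof in Section~\ref{sec:Proof_1}: you observe that the equality case forces $|\nabla u|$ to be a minimiser of $\mathcal{F}_\gamma$, hence a constant multiple of the strictly positive principal eigenfunction (citing Daners' theory for the indefinite Robin problem), yet $|\nabla u|$ vanishes at the extrema of $u$ on the compact set $\overline\Omega$, giving $\nabla u\equiv 0$. This is cleaner than the closing step in Section~\ref{sec:Proof_2}, which passes through the Sternberg--Zumbrun geometric interpretation to first deduce that $u$ is one-dimensional. What the paper's main proof in Section~\ref{sec:Proof_1} buys over yours is that it avoids Kato and Bochner entirely: by testing with the individual components $v_i=\partial_{x_i}u$ rather than the scalar $|\nabla u|$, the key inequality $\sum_i\mathcal{F}_\gamma(v_i)\le 0$ drops out of a single integration by parts plus the boundary lemma, and the minimiser argument applies to each $v_i$ (which vanishes where $\nu=e_i$). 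Both routes are sound; yours is slightly heavier on differential identities but arrives at the same place. One minor point of caution: the pointwise Bochner identity $\tfrac12\Delta|\nabla u|^2=|D^2u|^2-f'(u)|\nabla u|^2$ requires $u\in C^3$ near the boundary, which is not guaranteed for $f\in C^1$; it is safer to work, as the paper does, with the weak form obtained by differentiating the equation and pairing with $v_i$ (or with $\chi v_i$), which only needs $u\in C^{2}$.
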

We point out that this conclusion holds for possibly unstable patterns and non-convex domains.

\autoref{ANG_th:Intro_CHM} is a special case of the above theorem.
Indeed, recalling that $\Omega$ is convex if, and only if, $\gamma\geq0$, we see immediately that, for $u$ a solution of~\eqref{ANG_Intro_EquationSemilineaire},
\begin{equation}\label{MonotonieLambdaGammaPositif}
\text{if $\Omega$ is convex, then $\lambda_\gamma\geq \lambda_0$}
\end{equation}
(in fact, the strict inequality holds in~\eqref{MonotonieLambdaGammaPositif}, see \autoref{th:MonotonieStricteLambdaGammaPositif});
hence, in a convex domain, stability implies Robin-curvature-stability.

The Robin-curvature-stability ($\lambda_\gamma\geq0$) happens to combine the two assumptions of~\autoref{ANG_th:Intro_CHM}, namely, the stability ($\lambda_0\geq0$) and the convexity of the domain ($\gamma\geq0$). 
As a first consequence of \autoref{ANG_Intro_th:GammaStability} we establish the robustness of~\autoref{ANG_th:Intro_CHM} under smooth perturbations of the domain and the nonlinearity.
\begin{proposition}\label{th:CoroPerturbation}
Let $\Omega$ be a domain and $f$ a $C^1$ nonlinearity. If $(\Omega_\eps,f_\eps)$ is a smooth perturbation of $(\Omega,f)$, there exists no stable pattern to~\eqref{ANG_Intro_EquationSemilineaire}.
\end{proposition}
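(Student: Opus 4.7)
The plan is to deduce Proposition~\ref{th:CoroPerturbation} from the stronger \autoref{ANG_Intro_th:GammaStability} by a compactness-and-continuity argument. Argue by contradiction: suppose there exist $\eps_k\to 0$ and nonconstant stable solutions $u_k$ of $-\Delta u=f_{\eps_k}(u)$ on $\Omega_{\eps_k}$ with Neumann conditions. Using the family of diffeomorphisms $\Phi_{\eps_k}:\Omega\to\Omega_{\eps_k}$ provided by the smooth perturbation (with $\Phi_{\eps_k}\to\mathrm{Id}$ in $C^2$), pull the $u_k$ back to functions $\tilde u_k$ on the fixed domain $\Omega$, which satisfy a sequence of semilinear elliptic equations with Neumann boundary conditions whose coefficients converge uniformly to those of $-\Delta u=f(u)$.

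Next, I pass to the limit via elliptic regularity. Assuming a uniform $L^\infty$ bound on the $\tilde u_k$ (see the obstacle below), Schauder estimates together with the $C^1$-regularity of $f$ yield $C^{2,\alpha}$-compactness, so up to extraction $\tilde u_k\to u_\infty$ where $u_\infty$ solves~\eqref{ANG_Intro_EquationSemilineaire} on $\Omega$ with Neumann conditions. Stability passes to the limit through the quadratic form $\mathcal{F}_0$, hence $u_\infty$ is stable. Since $\Omega$ is convex, \autoref{ANG_th:Intro_CHM} gives $u_\infty\equiv c$ with $f'(c)\leq 0$.

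At the constant $c$, a direct computation gives $\lambda_\gamma(u_\infty)=\mu_\gamma-f'(c)$. Because $\Omega$ is convex and bounded, $\gamma\geq 0$ on $\partial\Omega$ and $\gamma$ cannot be identically zero (else $\partial\Omega$ would be flat everywhere, impossible for a bounded domain); the principal eigenfunction of the Robin-curvature Laplacian is non-constant, and one concludes $\mu_\gamma>0$. Combined with $-f'(c)\geq 0$, this gives $\lambda_\gamma(u_\infty)\geq\mu_\gamma>0$, strictly. By continuous dependence of the principal eigenvalue of the Robin-curvature Laplacian with potential $-f_\eps'(u_\eps)$ on $C^2$-perturbations of $\Omega$ and uniform perturbations of the coefficients (using the theory of Daners cited in the introduction to handle the indefinite Robin terms), $\lambda_\gamma^{\eps_k}(u_k)\to\lambda_\gamma(u_\infty)>0$. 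Hence for $k$ large $\lambda_\gamma^{\eps_k}(u_k)\geq 0$, and \autoref{ANG_Intro_th:GammaStability} applied to $(\Omega_{\eps_k},f_{\eps_k},u_k)$ forces $u_k$ to be constant, contradicting that $u_k$ is a pattern.

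The main obstacle is the compactness step. Since $f$ is merely $C^1$, stability of $u_k$ does not by itself yield a uniform $L^\infty$ bound on the pulled-back $\tilde u_k$, and without such a bound the Schauder estimates do not close. The cleanest way forward is to build the uniform $L^\infty$ bound into the definition of "smooth perturbation" (e.g., requiring the $u_\eps$ to stay in a fixed compact range), or to exploit extra structure such as the behaviour of $f$ at infinity. Once uniform boundedness is secured, the remainder is a routine application of eigenvalue-perturbation theory together with \autoref{ANG_Intro_th:GammaStability}.
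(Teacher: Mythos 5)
Your overall strategy is the same as the paper's: argue by contradiction, extract a convergent subsequence of patterns, pass to the limit, and derive a contradiction from the strict positivity of the Robin-curvature spectral quantity at the limit. The difference is in how that strict positivity is obtained. The paper proves a separate lemma (\autoref{th:MonotonieStricteLambdaGammaPositif}) stating that $\lambda_\gamma>\lambda_0$ for \emph{any} solution in a bounded convex domain (by locating a boundary point with $\gamma>0$ and using the positivity of the principal eigenfunction), and then gets the contradiction directly from $\lambda_{\gamma,\eps}<0\leq\lambda_{0,\eps}$ passing to $\lambda_\gamma\leq 0\leq\lambda_0$. You instead first invoke \autoref{ANG_th:Intro_CHM} to identify the limit as a constant $c$ with $f'(c)\leq 0$, compute $\lambda_\gamma=\mu_\gamma-f'(c)$ explicitly at that constant, show $\mu_\gamma>0$, and then go back to the approximating sequence via continuity and \autoref{ANG_Intro_th:GammaStability}. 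Your route is correct and amounts to re-proving the paper's lemma in the special case of a constant solution; the paper's version is a bit more economical because it never needs to identify the limit. One small imprecision: the reason $\mu_\gamma>0$ is not that "the principal eigenfunction is non-constant" but that if $\mu_\gamma=0$ the minimizer would have $\nabla\varphi\equiv 0$ \emph{and} $\int_{\partial\Omega}\gamma\varphi^2=0$, forcing $\gamma\equiv 0$ on $\partial\Omega$ since $\varphi>0$, which is impossible for a bounded convex domain (the boundary point farthest from the origin has $\gamma>0$ by comparison with a ball).

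On the compactness caveat you raise: the paper indeed asserts uniform $C^{2,\alpha}$ bounds "from classical elliptic estimates" without comment, and you are right that Schauder estimates require a uniform $L^\infty$ bound as input; this is implicitly packaged into the paper's notion of "smooth perturbation" (which the paper leaves at the level of a reference to Henry et al., where the perturbation theory is carried out for solutions in a fixed bounded range). So this is not a gap specific to your argument — the paper makes the same implicit assumption — but you are correct that it should be made explicit either in the definition of the perturbation or via extra hypotheses on $f$ at infinity.
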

The perturbation is understood to be such that $(\Omega,f)\mapsto \lambda_\gamma$ is continuous. It is classical~\cite{Henry2005} (see also~\cite{Hale2005,Dancer1997,Daners1999}) that this is the case under a $C^1$ perturbation on the nonlinearity and a small $C^{2,\alpha}$ modification of the boundary of the domain (whereas it may fail under rougher perturbations~\cite{Daners}). 

As further explained in Section~\ref{sec:Literature}, a result of Kohn and Sternberg~\cite{Kohn1989} implies that there exist patterns in some domains that are perturbations of a convex domain (see \autoref{ANG_IntG_fig:SurfaceMinimale}). The above proposition implies that such a stable pattern does not exist unless the nonlinearity has a large $C^1$ norm.



\paragraph*{}
%
\autoref{ANG_Intro_th:GammaStability} involves the quantity $\lambda_\gamma$ which is rather implicit and depends on the solution itself. One may look for more explicit sufficient conditions for the non-existence of patterns.

The following result provides such a sufficient condition. It involves the stability of the solution, the geometry of the domain (through the Robin-curvature Laplacian) and the magnitude of the nonlinearity.
\begin{theorem}\label{th:CoroInstablePRELIMINARY}
Let $u$ be a pattern of~\eqref{ANG_Intro_EquationSemilineaire}. For any $a\geq 1$, we have
\begin{equation}\label{RelationStabilityGeometryMagnitude}
(a-1)\lambda_0< \sup f'-\mu_{a\gamma}.
\end{equation}
with $\mu_a$ from~\autoref{Def:CurvatureGamma}.
Consequently,
\begin{enumerate}
\item If $\mu_\gamma\geq \sup f'$, there exists no (possibly unstable) pattern to~\eqref{ANG_Intro_EquationSemilineaire}.
\item If $\mu_{a\gamma}\geq \sup f'$ for some $a\geq1$, there exists no stable pattern to~\eqref{ANG_Intro_EquationSemilineaire}.
\end{enumerate}
\end{theorem}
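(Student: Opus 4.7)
The plan is to prove~\eqref{RelationStabilityGeometryMagnitude} by testing two Rayleigh quotients — the one defining $\mu_{a\gamma}$ and the one defining $\lambda_0$ — against the special function $\psi=\vert\nabla u\vert$. The two consequences follow immediately by contrapositive: part (1) is the case $a=1$ of~\eqref{RelationStabilityGeometryMagnitude}, for which the left-hand side vanishes, while part (2) uses that stability gives $\lambda_0\geq 0$, so $(a-1)\lambda_0\geq 0$ combined with $\mu_{a\gamma}\geq\sup f'$ would violate the strict inequality.

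The first ingredient is a Bochner-type pointwise identity: differentiating $-\Delta u=f(u)$ and applying Bochner's formula gives, on $\{\nabla u\neq 0\}$,
\begin{equation*}
    \vert\nabla u\vert\,\Delta\vert\nabla u\vert \;=\; \vert D^2 u\vert^2 - \bigl\lvert\nabla\vert\nabla u\vert\bigr\rvert^2 - f'(u)\,\vert\nabla u\vert^2.
\end{equation*}
Integrating by parts over $\Omega$ and using the classical boundary identity $\D_\nu\vert\nabla u\vert = -\mathrm{II}(\nabla u,\nabla u)/\vert\nabla u\vert$ (obtained by differentiating $\D_\nu u=0$ in tangential directions, which is legitimate because the Neumann condition forces $\nabla u$ to be tangent to $\D\Omega$), together with the pointwise bound $\mathrm{II}(\nabla u,\nabla u)\geq\gamma\vert\nabla u\vert^2$, I obtain the core estimate
\begin{equation*}
    \int_\Omega f'(u)\,\vert\nabla u\vert^2 \;\geq\; \int_\Omega \vert D^2 u\vert^2 + \int_{\D\Omega}\gamma\,\vert\nabla u\vert^2. \qquad(\star)
\end{equation*}
Testing the Rayleigh quotient for $\lambda_0$ against $\psi=\vert\nabla u\vert$ and applying Kato's inequality $\bigl\lvert\nabla\vert\nabla u\vert\bigr\rvert^2\leq\vert D^2 u\vert^2$ together with $(\star)$ yields the auxiliary bound $\int_{\D\Omega}\gamma\vert\nabla u\vert^2 \leq -\lambda_0\int_\Omega\vert\nabla u\vert^2$. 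Testing next the Rayleigh quotient for $\mu_{a\gamma}$ against $\psi=\vert\nabla u\vert$ and chaining, in order, Kato, $(\star)$, the trivial bound $f'(u)\leq\sup f'$, and the auxiliary boundary estimate (this last step requiring $a\geq 1$), I arrive at the non-strict form of~\eqref{RelationStabilityGeometryMagnitude}.

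The main obstacle is upgrading this to the \emph{strict} inequality, which is what actually delivers the non-existence conclusions. I plan to exclude equality by tracing the simultaneous saturation of every step above. Equality in the $\mu_{a\gamma}$ test would force $\vert\nabla u\vert$ to be a principal eigenfunction of the Robin-curvature Laplacian and hence, by the theory of (indefinite) Robin Laplacians referenced after \autoref{Def:CurvatureGamma}, strictly positive on $\overline{\Omega}$; equality in Kato would force $D^2 u\,\nabla u$ to be parallel to $\nabla u$; equality in the curvature bound would force $\nabla u$ to lie in the lowest-eigenvalue subspace of $\mathrm{II}$ on $\D\Omega$; and equality in the step $\int f'(u)\vert\nabla u\vert^2=\sup f'\int\vert\nabla u\vert^2$ would force $f'\circ u$ to be constant. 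These rigidities, combined with $-\Delta u=f(u)$, force $u$ to be constant and contradict the pattern assumption. A secondary technical nuisance is the lack of smoothness of $\vert\nabla u\vert$ on $\{\nabla u=0\}$; this is handled by the standard regularization $\sqrt{\vert\nabla u\vert^2+\eps^2}$ followed by a limit $\eps\to 0$.
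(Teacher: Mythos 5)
Your proposal is correct, and it takes a genuinely different route from the paper's. The paper first proves a purely algebraic decomposition (its Lemma~5): writing $\mathcal{F}_\gamma(\psi)=b\,\mathcal{F}_0(\psi)+(1-b)\,\mathcal{G}_{\frac{1}{1-b}\gamma}(\psi)-(1-b)\int_\Omega f'(u)\psi^2$ for any test function $\psi$ and optimizing over $\psi$ yields $a\lambda_\gamma\geq(a-1)\lambda_0+\mu_{a\gamma}-\sup f'$ for $a\geq1$; the strict inequality then comes \emph{for free} from \autoref{ANG_Intro_th:GammaStability}, which gives $\lambda_\gamma<0$ for any nonconstant solution. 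By contrast, you bypass $\lambda_\gamma$ entirely: you plug the specific function $|\nabla u|$ into both Rayleigh quotients and chain Kato's inequality with the integrated Bochner estimate $(\star)$ — which, note, is exactly the paper's \autoref{Lemma_InegaliteSommeF} rewritten as $\int_\Omega|D^2u|^2-\int_\Omega f'(u)|\nabla u|^2+\int_{\partial\Omega}\gamma|\nabla u|^2\leq0$. Your route is closer in spirit to the paper's ``alternative proof'' in Section~4.2 (the Sternberg--Zumbrun geometric Poincar\'e inequality) combined with the $a$-parameter trick, and it is self-contained in that it does not invoke \autoref{ANG_Intro_th:GammaStability}. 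The trade-off is that you must pay for strictness by hand via an equality analysis, whereas the paper gets it at no extra cost.

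On that point, the final step of your strictness argument is too terse and should be made explicit. It is not necessary to track saturation of every inequality; equality in the very first step — $\mu_{a\gamma}\|\,|\nabla u|\,\|_{L^2}^2=\mathcal{G}_{a\gamma}(|\nabla u|)$ — already suffices. That equality forces $|\nabla u|$ to be a (necessarily nonzero, since $u$ is a pattern) multiple of the principal eigenfunction of the $a\gamma$-Robin-curvature Laplacian, hence $|\nabla u|>0$ on $\overline\Omega$. But $\partial_\nu u=0$ makes $\nabla u$ tangent to the compact surface $\partial\Omega$, and the restriction $u|_{\partial\Omega}$ must have a critical point, at which the tangential and normal derivatives both vanish, so $\nabla u=0$ there — a contradiction. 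Stating this boundary-vanishing argument (which is the same device the paper uses at the end of the proof of \autoref{ANG_Intro_th:GammaStability}) closes the gap; the vague appeal to ``these rigidities\dots force $u$ to be constant'' is otherwise not convincing as written.
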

This theorem provides sufficent conditions for the non-existence of patterns when the domain is not convex or when the patterns are not supposed to be stable. Therefore, this theorem covers many cases that are not covered by \autoref{ANG_th:Intro_CHM}.

Note that \autoref{ANG_th:Intro_CHM} is included in the above theorem as a particular case (except for the case of degenerate stability $\lambda_0=0$): if $\Omega$ is convex (i.e. $\gamma\geq0$) and $u$ is a stable non-degenerate pattern (i.e. $\lambda_0>0$), then $\mu_{a\gamma}>0$ and, at the limit $a\to+\infty$ in~\eqref{RelationStabilityGeometryMagnitude}, we find a contradiction, hence $u$ is constant.
On the contrary, if $\Omega$ is not convex (i.e. $\gamma\not\geq0$), then $\lim_{a\to +\infty}\frac{\mu_{a\gamma}}{a}=-\infty$ (\cite{Levitin2008}), and the inequality~\eqref{RelationStabilityGeometryMagnitude} becomes trivial at the limit. 

We point out that assertion $2.$ may not always improve assertion $1.$ In dimension $n=2$, we have that $\int_{\D\Omega}\gamma=2\pi>0$ from the Gauss-Bonnet theorem, and we can show that $a\mapsto \mu_a$ reaches (a unique) positive maximum at some $a_\star>0$ (see \cite[Theorem~2.1]{Umezu} with $g\equiv0$, $h\equiv-\gamma$). However, we cannot guarantee that $a_\star\geq1$ in general.

Besides providing criteria for the non-existence of patterns, inequality~\eqref{RelationStabilityGeometryMagnitude} gives an upper bound on the stability of patterns in terms of the magnitude of the nonlinearity and the geometry of the domain.

%
%
%
%
%
%
%
%
%

\paragraph*{}
As a consequence of~\autoref{th:CoroInstablePRELIMINARY}, we derive  the non-existence of patterns if the domain is shrinking or if the nonlinearity is vanishing, when $\mu_\gamma>0$.
\begin{corollary}\label{Coro_Shrinking}
Fix $(\Omega,f)$ such that $\mu_\gamma>0$, $\sup f'<+\infty$, and set $0<\eta\leq \sqrt{\frac{\mu_\gamma}{\sup f'}}$. Then, there exists no (possibly unstable) solutions to~\eqref{ANG_Intro_EquationSemilineaire} for $(\eta\Omega, f)$ or $(\Omega,\eta^2  f)$, or more generally for any $(\eta^{1-\alpha}\Omega,\eta^{2\alpha} f)$, with $\alpha\in(0,1)$.
\end{corollary}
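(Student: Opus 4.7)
The plan is to reduce this corollary to assertion $1$ of \autoref{th:CoroInstablePRELIMINARY}, which asserts the non-existence of patterns as soon as $\mu_\gamma\geq \sup f'$. The only real content is therefore to understand how the two quantities $\mu_\gamma$ and $\sup f'$ behave under the scalings $(\Omega,f)\mapsto (\eta^{1-\alpha}\Omega,\eta^{2\alpha}f)$, and to check that the pair stays on the right side of the inequality when $\eta\leq\sqrt{\mu_\gamma/\sup f'}$.

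First I would compute the scaling of the Robin-curvature principal eigenvalue. Given $\tilde\Omega=\eta\Omega$ and a test function $\tilde\psi\in H^1(\tilde\Omega)$, the change of variable $\psi(x):=\tilde\psi(\eta x)$ yields $\int_{\tilde\Omega}|\nabla\tilde\psi|^2=\eta^{n-2}\int_\Omega|\nabla\psi|^2$ and $\int_{\tilde\Omega}\tilde\psi^2=\eta^n\int_\Omega\psi^2$. The principal curvatures of $\tilde\Omega$ are those of $\Omega$ divided by $\eta$, so $\tilde\gamma(\eta x)=\gamma(x)/\eta$, giving $\int_{\partial\tilde\Omega}\tilde\gamma\,\tilde\psi^2=\eta^{n-2}\int_{\partial\Omega}\gamma\,\psi^2$. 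Both terms of $\mathcal{G}_\gamma$ therefore scale by the same factor $\eta^{n-2}$, while the $L^2$ normalization absorbs $\eta^n$, and one obtains
\begin{equation}
\mu_\gamma(\eta\Omega)=\eta^{-2}\,\mu_\gamma(\Omega).
\end{equation}
More generally, $\mu_\gamma(\eta^{1-\alpha}\Omega)=\eta^{-2(1-\alpha)}\mu_\gamma(\Omega)$. On the nonlinearity side, the trivial identity $\sup(\eta^{2\alpha}f)'=\eta^{2\alpha}\sup f'$ is all that is needed.

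Combining the two, the non-existence condition of \autoref{th:CoroInstablePRELIMINARY} applied to the rescaled pair $(\eta^{1-\alpha}\Omega,\eta^{2\alpha}f)$ reads
\begin{equation}
\eta^{-2(1-\alpha)}\mu_\gamma(\Omega)\;\geq\;\eta^{2\alpha}\sup f',
\end{equation}
which, after multiplying by $\eta^{2(1-\alpha)}$, is exactly $\mu_\gamma(\Omega)\geq \eta^2\sup f'$, i.e., $\eta\leq\sqrt{\mu_\gamma/\sup f'}$. The three special cases stated ($\alpha=0$, $\alpha=1$, and general $\alpha\in(0,1)$) are then all immediate consequences, and the conclusion follows from assertion $1$ of \autoref{th:CoroInstablePRELIMINARY}.

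There is essentially no obstacle: the only delicate point is to keep the scaling of the boundary integral and the bulk gradient integral synchronized (both pick up $\eta^{n-2}$, by virtue of the fact that $\gamma$ has the dimension of an inverse length). I would quickly double-check this by recalling that $\mu_\gamma$ is, formally, the principal eigenvalue of $-\Delta$ with boundary condition $\partial_\nu\psi+\gamma\psi=0$, an operator whose natural dimension is (length)$^{-2}$, confirming the $\eta^{-2}$ scaling.
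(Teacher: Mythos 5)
Your proof is correct and follows essentially the same route as the paper: establish the scaling law $\mu_\gamma(\eta\Omega)=\eta^{-2}\mu_\gamma(\Omega)$ (the paper states it, you verify it via change of variables) and then invoke assertion~1 of \autoref{th:CoroInstablePRELIMINARY} for the rescaled pair. The detailed change-of-variables computation you give, including the observation that the curvature and bulk gradient terms both pick up a factor $\eta^{n-2}$, is exactly the check the paper leaves implicit.
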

The proof of this result relies on the scaling property of $\mu_\gamma$. For $\eta>0$, denote $\gamma_\eta(\cdot):=\eta^{-1}\gamma(\eta^{-1}\cdot)$ the minimal curvature of $\eta\Omega$ from~\autoref{Def:CurvatureGamma}. Then, we have
$$\mu_{\gamma_\eta}(\eta\Omega)=\eta^{-2}\mu_\gamma(\Omega),$$
and so applying~\autoref{th:CoroInstablePRELIMINARY} proves the above corollary.

The above corollary can be put into perspective with a theorem of Dancer that implies the non-existence of stable patterns in \emph{dilated domains}, see \autoref{th:Dancer} in Section~\ref{sec:Literature}.
\paragraph*{}
To apply our results to even more concrete situations, we need lower bounds on $\mu_\gamma$ in terms of classical geometry quantities (in-radius, curvature, etc). If the domain is convex, such a bound can be inferred from the literature on the Laplacian associated with standard positive Robin boundary condition (e.g.~\cite{Kovarik2012,Savo2019,Umezu,Bareket1975}). For example, \cite[Corrolary~3]{Savo2019} yields
\begin{equation}\label{Mu_LowerBound}
\mu_{a\gamma}>\frac{\pi^2a\underline{\gamma}}{4R^2a\underline{\gamma}+\pi^2R},\qquad \forall a\geq 1,
\end{equation}
where $R$ is the in-radius of $\Omega$ and $\underline{\gamma}=\inf_{\D\Omega}\gamma$.
Owing to this lower bound and~\autoref{th:CoroInstablePRELIMINARY}, we can improve~\autoref{ANG_th:Intro_CHM} in the following way.
\begin{corollary}\label{th:ApplicationConvex}
Assume that $\Omega$ is a convex domain of in-radius $R$ and lowest curvature $\underline{\gamma}:=\inf_{\D\Omega}\gamma$.
\begin{enumerate}
\item If $\sup f'<\frac{\pi^2\underline{\gamma}}{4\underline{\gamma}R^2+\pi^2R}$, there exists no (possibly unstable) pattern.
\item If $\frac{\pi^2\underline{\gamma}}{4\underline{\gamma}R^2+\pi^2R}\leq \sup f'$, then, for $u$ a pattern of~\eqref{ANG_Intro_EquationSemilineaire},
\begin{equation}\label{EstimateExampleConvex}
\lambda_0< \sup f' -\frac{2\pi^2\underline\gamma}{8R^2\underline{\gamma}+\pi^2R}.
\end{equation}
\end{enumerate}
\end{corollary}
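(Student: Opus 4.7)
The plan is simply to combine the general inequality~\eqref{RelationStabilityGeometryMagnitude} of \autoref{th:CoroInstablePRELIMINARY} with the explicit lower bound~\eqref{Mu_LowerBound} taken from~\cite{Savo2019}. Since $\Omega$ is convex we have $\underline{\gamma}\geq 0$, so~\eqref{Mu_LowerBound} is meaningful. The only parameter at our disposal is $a\geq 1$, and I would pick $a=1$ for assertion~1 and $a=2$ for assertion~2.

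For assertion~1, specialize~\eqref{Mu_LowerBound} to $a=1$ to get $\mu_\gamma > \frac{\pi^2\underline{\gamma}}{4R^2\underline{\gamma}+\pi^2 R}$. Under the hypothesis $\sup f' < \frac{\pi^2\underline{\gamma}}{4R^2\underline{\gamma}+\pi^2 R}$, this yields $\sup f'<\mu_\gamma$, so assertion~1 of \autoref{th:CoroInstablePRELIMINARY} rules out the existence of any (possibly unstable) pattern.

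For assertion~2, the key (cosmetic) observation is that the target constant can be rewritten as
\begin{equation}
\frac{2\pi^2\underline{\gamma}}{8R^2\underline{\gamma}+\pi^2 R}=\frac{\pi^2\cdot 2\underline{\gamma}}{4R^2\cdot 2\underline{\gamma}+\pi^2 R},
\end{equation}
which is exactly the right-hand side of~\eqref{Mu_LowerBound} evaluated at $a=2$; this identifies the right choice of parameter. Let $u$ be a pattern. Applying~\eqref{RelationStabilityGeometryMagnitude} with $a=2$ gives $(2-1)\lambda_0<\sup f'-\mu_{2\gamma}$, i.e.\ $\lambda_0<\sup f'-\mu_{2\gamma}$, and combining with $\mu_{2\gamma}>\frac{2\pi^2\underline{\gamma}}{8R^2\underline{\gamma}+\pi^2 R}$ yields~\eqref{EstimateExampleConvex}.

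There is essentially no obstacle: once the correct value $a=2$ is read off from the form of~\eqref{EstimateExampleConvex}, the argument is a direct substitution and the inequalities chain in the right direction because $\mu_{a\gamma}$ enters~\eqref{RelationStabilityGeometryMagnitude} with a minus sign. One could try to optimize the bound on $\lambda_0$ over all $a\geq 1$ (the function $a\mapsto \sup f'-\frac{\pi^2 a\underline{\gamma}}{4R^2 a\underline{\gamma}+\pi^2 R}$ is decreasing, so larger $a$ would improve the estimate), but the clean closed form comes precisely from the mild choice $a=2$, so I would not push further.
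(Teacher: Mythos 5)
Your proof is correct and follows exactly the route the paper indicates: plug the Savo lower bound~\eqref{Mu_LowerBound} into~\eqref{RelationStabilityGeometryMagnitude}, with $a=1$ for the first assertion and $a=2$ for the second so that the factor $(a-1)$ becomes $1$. One small imprecision in your closing parenthetical: for general $a>1$ the resulting bound is $\lambda_0<\frac{1}{a-1}\left(\sup f'-\frac{\pi^2 a\underline{\gamma}}{4R^2 a\underline{\gamma}+\pi^2 R}\right)$, so it is this quotient (not merely the numerator you display) that should be optimized over $a$, and it is not simply decreasing in $a$ because of the $\frac{1}{a-1}$ prefactor.
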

To prove this corollary, simply plug~\eqref{Mu_LowerBound} into~\eqref{RelationStabilityGeometryMagnitude}, take $a=1$ for the first assertion and $a=2$ for the second assertion. One can also improve~\eqref{EstimateExampleConvex} by taking the best value of $a$ in~\eqref{RelationStabilityGeometryMagnitude}.

The above result indeed improves the conclusion of~\autoref{ANG_th:Intro_CHM} if $\sup f'$ is not too large: the first assertion states the non-existence of all (possibly unstable) patterns if $\sup f'$ is small enough, and the second assertion the non-existence of some unstable patterns if $\sup f'$ is intermediate (note that the right member of~\eqref{EstimateExampleConvex} is negative if $\sup f'$ is close to $\frac{\pi^2\underline{\gamma}}{4\underline{\gamma}R^2+\pi^2R}$). However, if $\sup f'$ is large, then the right member~\eqref{EstimateExampleConvex} is positive while \autoref{ANG_th:Intro_CHM} already implies $\lambda_0<0$.

\subsection{Nonlinear Cacciopoli inequality}

The following result is a general gradient estimate for the solutions of~\eqref{ANG_Intro_EquationSemilineaire} which can be seen as a \emph{nonlinear Caccioppoli inequality}, or an \emph{inverse Poincaré inquality}. 
For the sake of generality, the result is stated for solutions of heterogeneous semi-linear equations, namely, when the nonlinearity $f=f(x,u)$ also depends explicitely on $x\in\Omega$.
\begin{theorem}\label{th:CoroStable}
Let $u$ be a solution of
\begin{equation}\label{EquationSemilineaire_Hetero}
    \left\{
    \begin{aligned}
        &-\Delta u(x)=f(x,u(x)), &&\forall x\in\Omega,\\ 
        &\D_\nu u(x)=0, &&\forall x\in\D\Omega,\\
    \end{aligned}
    \right.
\end{equation}
where $f\in C^{1}\left(\Omega\times\R,\R\right)$, and recall $\mu_\gamma$ from~\autoref{Def:CurvatureGamma}. Then
\begin{equation}\label{CacciopolyInequality}
\mu_{\gamma}\int_\Omega\vert\nabla u\vert^2\leq \int_\Omega f(x,u(x))^2dx,
\end{equation}
with equality if and only if $u$ is constant.
\end{theorem}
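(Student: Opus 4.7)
The plan is to test the Rayleigh quotient defining $\mu_\gamma$ from \autoref{Def:CurvatureGamma} with the admissible function $\psi=|\nabla u|$, and then bound the resulting expression above by $\int_\Omega f(x,u)^2\,dx$ via a Reilly-type integration by parts. The variational characterization gives for free
\[
\mu_\gamma \int_\Omega |\nabla u|^2 \,\leq\, \int_\Omega \bigl|\nabla |\nabla u|\bigr|^2 + \int_{\D\Omega}\gamma\,|\nabla u|^2,
\]
so the whole task reduces to controlling this right-hand side.

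Two ingredients will do this. First, Kato's pointwise inequality $\bigl|\nabla|\nabla u|\bigr|^2 \leq |\nabla^2 u|^2$, valid a.e.\ and saturated exactly where each column of the Hessian is parallel to $\nabla u$. Second, I would integrate $\int_\Omega u_{ij}u_{ij}$ by parts twice: the interior piece collapses to $\int_\Omega (\Delta u)^2=\int_\Omega f(x,u)^2$ after using $\D_\nu u=0$, leaving the boundary remainder $\tfrac12\int_{\D\Omega}\D_\nu|\nabla u|^2$. A short geometric computation, obtained by differentiating the Neumann condition tangentially and using that $\nabla u$ is tangent to $\D\Omega$, yields the pointwise identity $\tfrac12\D_\nu|\nabla u|^2 = -\mathrm{II}(\nabla u,\nabla u)$, with $\mathrm{II}$ the second fundamental form of $\D\Omega$. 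Together these give the Reilly-type identity
\[
\int_\Omega |\nabla^2 u|^2 + \int_{\D\Omega} \mathrm{II}(\nabla u,\nabla u) \,=\, \int_\Omega f(x,u)^2\,dx.
\]
Since $\nabla u$ is tangent on $\D\Omega$ and $\gamma$ is the smallest eigenvalue of $\mathrm{II}$, we have $\mathrm{II}(\nabla u,\nabla u)\geq \gamma|\nabla u|^2$ pointwise; chaining the three estimates produces exactly \eqref{CacciopolyInequality}. Note that the non-homogeneity of $f$ enters only through the identity $(\Delta u)^2=f(x,u)^2$, so the argument is insensitive to whether $f$ depends on $x$.

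For the equality case, if equality held for a non-constant $u$, then $|\nabla u|$ would realize the infimum and hence be a principal eigenfunction of the Robin-curvature Laplacian. By the Daners theory cited after \autoref{Def:CurvatureGamma}, this principal eigenfunction has constant sign and, by the strong maximum principle, is strictly positive on $\overline{\Omega}$; this contradicts the fact that $u$ continuous on the compact set $\overline{\Omega}$ must attain its extrema at points where $\nabla u=0$ (interior extrema kill all derivatives, and boundary extrema kill the tangential gradient while $\D_\nu u=0$ by Neumann). I expect the main technical obstacle to lie precisely in this equality analysis: handling Kato's inequality carefully on the nodal set $\{\nabla u=0\}$, and justifying non-vanishing of the principal eigenfunction of the \emph{indefinite} Robin problem, which is not covered by the classical strong maximum principle but by Daners' theory on indefinite weights.
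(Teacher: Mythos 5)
Your proof is correct but follows a different bookkeeping from the paper's main proof. The paper works componentwise: setting $v_i=\D_{x_i}u$, it multiplies the linearized equation for $v_i$ by $v_i$, integrates by parts, sums over $i$, and controls the boundary term with \autoref{LemmeIntermediaireVariations}. This yields directly $\mu_\gamma\int_\Omega|\nabla u|^2\leq\sum_i\mathcal{G}_\gamma(v_i)=\Vert\nabla^2 u\Vert^2+\int_{\D\Omega}\gamma|\nabla u|^2\leq\int_\Omega f(x,u)^2$, with no need for Kato's inequality. Your route plugs $\psi=|\nabla u|$ into the Rayleigh quotient for $\mu_\gamma$ and then uses Kato plus a Reilly/Bochner integration by parts; this is essentially the ``geometric Poincar\'e'' method that the paper uses as an \emph{alternative} proof of \autoref{ANG_Intro_th:GammaStability} in Section~\ref{sec:Proof_2}, and the combination you describe is exactly what the paper records in \autoref{Cacciopoli_MorePrecise}. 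The advantage of your formulation is that, before applying Kato, you obtain the sharper intermediate bound $\mu_\gamma\Vert\nabla u\Vert^2\leq\Vert\nabla|\nabla u|\Vert^2+\int_\Omega f(x,u)^2-\Vert\nabla^2 u\Vert^2$, which quantifies the defect and encodes a flatness information; the paper's componentwise argument is slightly more elementary and avoids the nodal-set subtleties of Kato's inequality.

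On the equality case, your argument is also correct but different in flavor. You note that equality forces $|\nabla u|$ to be a positive principal eigenfunction of the (indefinite) Robin problem, which contradicts the fact that $u$ must attain its extremum on $\overline\Omega$ at a point where $\nabla u=0$ (either interior critical point, or boundary point where tangential gradient and $\D_\nu u$ both vanish). The paper instead observes that each $v_i$ is a constant multiple of the principal eigenfunction, hence of constant strict sign; then the Neumann condition forces $v_i$ to vanish somewhere on $\D\Omega$, so $v_i\equiv0$. Both are valid; the only thing worth being explicit about in your version is that if $u$ is constant, $\psi=|\nabla u|\equiv0$ is not admissible in the normalized Rayleigh quotient, and the inequality \eqref{CacciopolyInequality} is then the trivial equality $0=0$, so the dichotomy ``equality iff constant'' holds.
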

\noindent We give a slight refinement of this result in \autoref{Cacciopoli_MorePrecise} (Section~\ref{sec:Cacciopoli}).

We emphasize that the theorem holds for possibly unstable solutions, and is universal in the sense that $\mu_\gamma$ does not depend on $f$. However, it is relevant only if the domain is such that the Robin Laplacian is positive (i.e. $\mu_\gamma\geq0$). This class of domains contains the convex domains.

Multiplying~\eqref{ANG_Intro_EquationSemilineaire} by $u$ and using the divergence theorem, we obtain the identity $\int_\Omega \vert\nabla u\vert^2=\int_\Omega uf(x,u)$, which allows to rewrite~\eqref{CacciopolyInequality} as
\begin{equation}
\int_\Omega f(x,u)\big( f(x,u)-\mu_\gamma u\big)\geq 0.
\end{equation}
Note also that a simple integration of~\eqref{ANG_Intro_EquationSemilineaire} gives $\int_\Omega f(x,u)=0$, and thus $\int_\Omega f(x,u)^2$ is a $L^2$ measure of the variation of $f(x,u)$ around its means value.

As mentioned before, the inequality~\eqref{CacciopolyInequality} can be seen as an inverse Poincar\'e inequality. Incidentally, it yields the upper bound
\begin{equation}\label{UpperBoundMuPoincare}
\mu_\gamma< C_{P},
\end{equation}
where $C_P$ is the best constant for the second Poincar\'e inequality (or, equivalently, the inverse of the spectral gap of the Laplacian with Neumann boundary conditions). To prove this, simply take $f(x,u)=u$ and $u$ the eigenfunction associated with $C_P$ in~\eqref{CacciopolyInequality}. This inequality implies that $\mu_\gamma$ is typically very small or negative in domains featuring a narrow bottleneck, such as Matano's dumbbell domains, introduced in Section~\ref{sec:Literature} below.

\subsection{Flatness estimates}

The literature conveys the idea that stable patterns tend to be flat (i.e. one-dimensional). This indeed holds in domains of the form $\R\times\omega$ or $\R^2\times\omega$ with $\omega\subset\R^{n-2}$ convex and bounded~\cite{Nordmann2019a} (but false in $\R^n$ when $n\geq8$~\cite{Pacard2013}). Let us provide quantitative estimates on the flatness of patterns.

To begin with, let us refine \autoref{th:CoroStable} and infer a flatness estimate in terms of the spectral gap of the Robin-curvature Laplacian. The following result deals with the non-homogeneous problem~\eqref{EquationSemilineaire_Hetero}. While \autoref{th:CoroStable} involves the lowest eigenvalue $\mu_\gamma$ of the Robin-curvature Laplacian, the following flatness estimate involves the \emph{second lowest eigenvalue} $\mu_{\gamma,2}$, defined as
\begin{equation}\label{ANG_Intro_DefLambdaGamma2}
\mu_{\gamma,2}:=\sup\Big\{\inf\{\mathcal{G}_\gamma(\psi),\psi\in E,                                                                                                                                                                                                                                                                                                                                                                                                          \Vert \psi\Vert_{{L}^2}=1\}, E\subset H^1(\Omega), \dim E=2\big\},
\end{equation}
with $\mathcal{G}_\gamma$ from \autoref{Def:CurvatureGamma}.
We point out that, since $\mu_\gamma$ is associated with a unique (up to renormalization) eigenfunction $\varphi$~\cite{Daners2013}, an equivalent definition is given by
\begin{equation}
\mu_{\gamma,2}:=\inf\left\{ \mathcal{G}_\gamma(\psi), \psi\in H^1(\Omega), \Vert \psi\Vert_{{L}^2}=1, \int_\Omega\psi\varphi=0\right\}.
\end{equation}
The fact that $\mu_{\gamma,2}-\mu_\gamma>0$ is referred to in the literature as the \emph{spectral gap} (in our context, we may call it the \emph{Robin-curvature spectral gap}). Many estimates on the spectral gap for Neumann boundary conditions are available~\cite{Mufa1997,Chen1997,Andrews2011,Rohleder2014}, and some of them can be adapted to the Robin-curvature boundary conditions using the method of~\cite{Daners2013}.

\begin{proposition}\label{th:FlatnessEstimateRobinCurvatureCoro}
Let $u$ be a pattern of the non-homogeneous equation~\eqref{EquationSemilineaire_Hetero}. There exists a direction $e\in\mathbb{S}^{n-1}$ such that,
\begin{equation}
(\mu_{\gamma,2}-\mu_{\gamma}) \frac{\Vert \nabla u\cdot e\Vert^2}{\Vert \nabla u\Vert^2}\geq \mu_{\gamma,2}-\sup f',
\end{equation}
where $\Vert\cdot\Vert$ denotes the usual $L^2$-norm.
\end{proposition}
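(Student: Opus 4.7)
The strategy is to refine the Cacciopoli-type identity behind \autoref{th:CoroStable} by using the spectral gap $(\mu_{\gamma,2}-\mu_\gamma)$ of the Robin-curvature Laplacian in place of its first eigenvalue. Setting $v:=\tfrac12|\nabla u|^2$ and combining the Bochner identity $\Delta v=|\nabla^2 u|^2+\nabla u\cdot\nabla\Delta u$ with the boundary identity $\partial_\nu v=-\mathrm{II}(\nabla u,\nabla u)\leq -\gamma|\nabla u|^2$ (which follows by differentiating $\partial_\nu u=0$ tangentially, since $\nabla u$ is tangent to $\partial\Omega$), integration by parts plus the equation \eqref{EquationSemilineaire_Hetero} yield the component-wise estimate
\begin{equation}
\sum_{i=1}^n \mathcal{G}_\gamma(\partial_i u)\leq (\sup f')\,\|\nabla u\|^2,
\end{equation}
whose crude lower bound by $\mu_\gamma\|\nabla u\|^2$ would recover exactly the Cacciopoli inequality.

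Second, I would exploit the spectral decomposition of the Robin-curvature Laplacian. Let $\varphi\in H^1(\Omega)$ be the (simple, $L^2$-normalized) principal eigenfunction, whose existence and uniqueness follow from~\cite{Daners2013}. Since $\mathcal{G}_\gamma$ is bounded below by $\mu_{\gamma,2}$ on the orthogonal complement of $\varphi$, decomposing any $\psi\in H^1(\Omega)$ along $\varphi$ gives the variational identity
\begin{equation}
\mathcal{G}_\gamma(\psi)\geq \mu_{\gamma,2}\|\psi\|^2-(\mu_{\gamma,2}-\mu_\gamma)\!\left(\int_\Omega\psi\varphi\right)^{\!2}.
\end{equation}
Applying this to $\psi=\partial_i u$, summing over $i$, and introducing the vector $A:=\int_\Omega\varphi\,\nabla u\in\R^n$ (so that $|A|^2=\sum_i(\int_\Omega\varphi\,\partial_i u)^2$), the first displayed inequality transforms into
\begin{equation}
\mu_{\gamma,2}\|\nabla u\|^2-(\mu_{\gamma,2}-\mu_\gamma)|A|^2\leq (\sup f')\,\|\nabla u\|^2.
\end{equation}

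Third, I would extract the direction $e$. If $A=0$ then $\mu_{\gamma,2}\leq\sup f'$ and the conclusion holds trivially for any $e\in\mathbb{S}^{n-1}$; otherwise choose $e:=A/|A|$, for which
\begin{equation}
|A|=A\cdot e=\int_\Omega\varphi\,(\nabla u\cdot e)\leq \|\varphi\|_{L^2}\,\|\nabla u\cdot e\|=\|\nabla u\cdot e\|
\end{equation}
by Cauchy--Schwarz. Substituting $|A|^2\leq\|\nabla u\cdot e\|^2$ and dividing by $\|\nabla u\|^2>0$ (which is nonzero because $u$ is a pattern) produces the claim. The technical heart of the argument is the first step, in particular the clean geometric boundary identity: one must differentiate $\partial_\nu u=0$ along $\partial\Omega$ to identify $\partial_\nu\partial_i u$ with a second-fundamental-form action on $\nabla u$, and absorb the possibly sign-changing $\gamma$ into the Robin-curvature form $\mathcal{G}_\gamma$; this is exactly the feature that makes the Robin-curvature Laplacian the right operator here. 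For the non-homogeneous equation, the extra cross term $\int_\Omega\nabla_x f\cdot\nabla u$ is handled via the Neumann identity $\int_\Omega\nabla f\cdot\nabla u=\int_\Omega f(x,u)^2$, under a suitable interpretation of $\sup f'$ that bounds $f_u$.
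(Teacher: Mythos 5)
Your proof is correct and follows essentially the same route as the paper's: the intermediate bound $\sum_i\mathcal{G}_\gamma(\partial_i u)\leq(\sup f')\Vert\nabla u\Vert^2$ combined with spectral information on $\mu_\gamma,\mu_{\gamma,2}$. The one genuine variation is how the direction $e$ is produced: the paper first rotates to an orthonormal basis $(e_1,\dots,e_n)$ so that $\langle\partial_{e_i}u,\varphi\rangle=0$ for $i\geq2$ and then uses the crude bound $\mathcal{G}_\gamma(\partial_{e_1}u)\geq\mu_\gamma\Vert\partial_{e_1}u\Vert^2$, whereas you apply the two-term spectral lower bound $\mathcal{G}_\gamma(\psi)\geq\mu_{\gamma,2}\Vert\psi\Vert^2-(\mu_{\gamma,2}-\mu_\gamma)\langle\psi,\varphi\rangle^2$ to every component, sum, and then use Cauchy--Schwarz on $A:=\int_\Omega\varphi\,\nabla u$ to collapse to $e=A/|A|$. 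The two are equivalent (and indeed, in the paper's basis $A\parallel e_1$, so your Cauchy--Schwarz step recovers the paper's bound exactly); your version has the small bonus of yielding the sharper form with $\langle\nabla u\cdot e,\varphi\rangle^2$ before the final Cauchy--Schwarz, which the paper relegates to a remark, and of handling the degenerate $A=0$ case explicitly.

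One caveat you half-acknowledge but do not actually resolve: in the non-homogeneous case, the Bochner/integration-by-parts computation gives $\sum_i\mathcal{G}_\gamma(\partial_i u)\leq\int_\Omega\partial_u f\,|\nabla u|^2+\int_\Omega\nabla_x f\cdot\nabla u=\int_\Omega f(x,u)^2$, and the cross term $\int_\Omega\nabla_x f\cdot\nabla u$ has no sign, so $\sup\partial_u f\cdot\Vert\nabla u\Vert^2$ is \emph{not} in general an upper bound for this right-hand side. Hence, for $f=f(x,u)$ the clean version of the estimate replaces $\sup f'$ by $\int_\Omega f(x,u)^2/\Vert\nabla u\Vert^2$ (this is precisely what \autoref{rmk_Flatness_intF} does); your phrase ``under a suitable interpretation of $\sup f'$'' glosses over this and should be made precise.
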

This proposition implies that patterns tend to be flat when $\mu_{\gamma,2}-\mu_{\gamma}\gg1$.
An analogous $k$-dimensional estimate can be derived with the $k$-ieth lowest eigenvalue of the Robin-curvature Lapacian, see \autoref{Remark_Flatness_Robin} (Section~\ref{sec:flatnessEstimates}).
We also point out that the estimate in \autoref{th:FlatnessEstimateRobinCurvatureCoro} holds if we replace $\sup f'$ by $\frac{\int_\Omega f(u)^2}{\Vert \nabla u\Vert^2}$, see~\autoref{rmk_Flatness_intF}.

\paragraph*{}
The following result is another flatness estimate for patterns of Morse index~$1$. For $u$ a solution of~\eqref{ANG_Intro_EquationSemilineaire}, we define $\lambda_{\gamma,2}$ the \emph{second lowest eigenvalue} of $\mathcal{F}_\gamma$ (\autoref{Definition_Lambda_Gamma}) by
\begin{equation}
\lambda_{\gamma,2}:=\sup\Big\{\inf\{\mathcal{F}_\gamma(\psi),\psi\in E,                                                                                                                                                                                                                                                                                                                                                                                                          \Vert \psi\Vert_{{L}^2}=1\}, E\subset H^1(\Omega), \dim E=2\big\},
\end{equation}
Accordingly, we define $\la_{0,2}$ by replacing $\mathcal{F}_\gamma$ with $\mathcal{F}_0$ (\autoref{def:stability}) in the above expression. 
We usually say that a solution of~\eqref{ANG_Intro_EquationSemilineaire} is of \emph{Morse index $1$} if $\lambda_0<0\leq \lambda_{0,2}$, that is, if there is only one direction of perturbation for which $u$ is unstable. This class of solutions is of particular importance, for example when considering solutions obtained from a mountain pass procedure or a minimization under constraint.
Analgously, we say that a solutionis of \emph{Robin-curvature Morse index}$1$ if $\lambda_\gamma<0\leq \lambda_{\gamma,2}$.

The following estimate is relevant for pattern of Morse index or Robin-curvature Morse index $1$.
\begin{proposition}\label{th:FlatnessEstimateRobinCurvature}
Let $u$ be a pattern of~\eqref{ANG_Intro_EquationSemilineaire}. There exists a direction $e\in\mathbb{S}^{n-1}$ such that
\begin{equation}\label{FlatnessEstimateRobinCurvature}
1\geq \frac{\Vert \nabla u\cdot e\Vert^2}{\Vert \nabla u\Vert^2}\geq \frac{\lambda_{\gamma,2}}{\lambda_{\gamma,2}-\lambda_\gamma},
\end{equation}
where $\Vert\cdot\Vert$ denotes the usual $L^2$-norm.

In addition, there exists a direction $e\in\mathbb{S}^{n-1}$ such that, for any $a\geq 1$,
\begin{equation}\label{FlatnessEstimateRobinCurvatureCoro2}
(a-1)(\lambda_{0,2}-\lambda_0)\frac{\Vert \nabla u\cdot e\Vert^2}{\Vert \nabla u\Vert^2}\geq (a-1)\lambda_{0,2}+\mu_{a\gamma}-\sup f'.
\end{equation}
\end{proposition}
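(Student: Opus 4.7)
The plan is to mimic the classical Casten--Holland--Matano differentiation of the PDE while keeping quantitative track of the spectral decompositions of $\mathcal{F}_\gamma$ and $\mathcal{F}_0$, together with the algebraic link to $\mathcal{G}_{a\gamma}$.

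\textbf{Step 1 (key CHM-type inequality).} I first claim that $\sum_{i=1}^n \mathcal{F}_\gamma(\D_i u)\leq 0$. Indeed, differentiating $-\Delta u=f(u)$ gives $-\Delta(\D_i u)=f'(u)\D_i u$ in $\Omega$. Multiplying by $\D_i u$, integrating by parts, and summing over $i$ yields
\[
\sum_i\Big(\int_\Omega|\nabla\D_i u|^2-\int_\Omega f'(u)(\D_i u)^2\Big)=\sum_i\int_{\D\Omega}\D_i u\,\D_\nu\D_i u=\tfrac12\int_{\D\Omega}\D_\nu|\nabla u|^2.
\]
Since $\D_\nu u=0$ on $\D\Omega$ forces $\nabla u$ to be tangential there, the standard boundary identity $\tfrac12\D_\nu|\nabla u|^2=-\Pi(\nabla u,\nabla u)$ holds, with $\Pi$ the second fundamental form. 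As $\Pi(\nabla u,\nabla u)\geq\gamma|\nabla u|^2$ by the very definition of $\gamma$, adding $\int_{\D\Omega}\gamma|\nabla u|^2$ to both sides proves the claim.

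\textbf{Step 2 (first flatness estimate).} Let $\varphi\in H^1$ denote the (unique, $L^2$-normalized) principal eigenfunction of $\mathcal{F}_\gamma$. The Rayleigh--Ritz characterization gives, for every $\psi\in H^1$,
\[
\mathcal{F}_\gamma(\psi)\geq \lambda_{\gamma,2}\|\psi\|^2-(\lambda_{\gamma,2}-\lambda_\gamma)\,\langle\psi,\varphi\rangle^2.
\]
Applied to $\psi=\D_i u$, summed over $i$, and combined with Step 1,
\[
(\lambda_{\gamma,2}-\lambda_\gamma)\,|V|^2\;\geq\;\lambda_{\gamma,2}\|\nabla u\|^2,\qquad V:=\big(\langle\D_i u,\varphi\rangle\big)_{i=1}^n\in\R^n.
\]
Choosing $e:=V/|V|\in\mathbb{S}^{n-1}$ and applying Cauchy--Schwarz, $|V|=\langle\nabla u\cdot e,\varphi\rangle\leq\|\nabla u\cdot e\|$, which rearranges to the desired lower bound on $\|\nabla u\cdot e\|^2/\|\nabla u\|^2$; the upper bound by $1$ follows from the pointwise Cauchy--Schwarz $|\nabla u\cdot e|\leq|\nabla u|$.

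\textbf{Step 3 (second flatness estimate).} A direct expansion yields the algebraic identity
\[
(1-a)\,\mathcal{F}_0(\psi)+a\,\mathcal{F}_\gamma(\psi)\;=\;\mathcal{G}_{a\gamma}(\psi)-\int_\Omega f'(u)\psi^2\;\geq\;(\mu_{a\gamma}-\sup f')\|\psi\|^2.
\]
Applied to $\psi=\D_i u$, summed over $i$, and using Step 1 together with $a\geq 1$ (so that $a\sum_i\mathcal{F}_\gamma(\D_i u)\leq 0$), one obtains $(a-1)\sum_i\mathcal{F}_0(\D_i u)\leq(\sup f'-\mu_{a\gamma})\|\nabla u\|^2$. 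Invoking the Rayleigh--Ritz bound for $\mathcal{F}_0$ with its principal eigenfunction $\varphi_0$, selecting $e$ from $V_0:=(\langle\D_i u,\varphi_0\rangle)_i$ exactly as in Step 2, and dividing by $\|\nabla u\|^2>0$ delivers~\eqref{FlatnessEstimateRobinCurvatureCoro2}. The main technical subtlety is Step 1, where the boundary identity must be turned into a sign-definite curvature expression; this is precisely where the choice of the weight $\gamma$ (rather than any nonnegative function) becomes essential. The rest is spectral bookkeeping and an optimal choice of direction.
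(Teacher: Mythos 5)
Your proof is correct and takes essentially the same approach as the paper: your Step~1 is the paper's key lemma establishing $\sum_{i}\mathcal{F}_\gamma(\D_{x_i} u)\leq0$, the algebraic identity in your Step~3 coincides with the paper's after the substitution $a=1/(1-b)$, and your direction $e=V/\vert V\vert$ agrees (up to sign) with the paper's adapted basis vector $e_1$. The only cosmetic difference is that you apply the two-term Rayleigh--Ritz bound $\mathcal{F}_\gamma(\psi)\geq\lambda_{\gamma,2}\Vert\psi\Vert^2-(\lambda_{\gamma,2}-\lambda_\gamma)\langle\psi,\varphi\rangle^2$ directly to each $\D_{x_i} u$ and sum, rather than first constructing an orthonormal basis adapted to $\varphi$; this incidentally yields the strengthened estimate with $\langle\nabla u\cdot e,\varphi\rangle^2$ in place of $\Vert\nabla u\cdot e\Vert^2$, which the paper records separately in a remark.
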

This proposition is a refinement of \autoref{ANG_Intro_th:GammaStability}.
Indeed, if $u$ is \emph{Robin-curvature-stable} (i.e. $\lambda_\gamma\geq0$), then~\eqref{FlatnessEstimateRobinCurvature} implies that $\frac{\lambda_{\gamma,2}}{\lambda_{\gamma,2}-\lambda_\gamma}=1$, and that $\nabla u=\vert\nabla u\vert e$, hence $u$ is one-dimensional; it is then easy to deduce that $u$ is constant. 

Estimate~\eqref{FlatnessEstimateRobinCurvatureCoro2} implies that patterns tend to be flat when $\lambda_{0,2}-\lambda_0\gg1$. An analogous $k$-dimensional estimate can be derived for patterns of Morse index $k\leq n$, see \autoref{Remark_Flatness_Robin} (Section~\ref{sec:flatnessEstimates}).

\paragraph*{}
Several refinements of the above flatness estimates are proposed in a series of remarks in Section~\ref{sec:flatnessEstimates}.

We also give another flatness estimate in \autoref{th:FlatnessEstimate_Poincare} (Section~\ref{sec:Proof_2}) which relies on the geometric Poincaré inequality introduced by Sternberg and Zunbrum~\cite{Sternberg1998a,Sternberg1998}.

\subsection{Unbounded domains}

Let us now study whether our results extend to unbounded domains. We consider our main equation
\begin{equation}\label{EquationSemilineaire_Unbounded}
    \left\{
    \begin{aligned}
        &-\Delta u(x)=f(u(x)), &&\forall x\in\Omega,\\ 
        &\D_\nu u(x)=0, &&\forall x\in\D\Omega,\\
        &u\in  L^\infty\cap C^{2} \left(\overline{\Omega}\right),
    \end{aligned}
    \right.
\end{equation}
where the domain $\Omega\subset\R^n$ ($n\geq2$) is possibly unbounded and uniformly $C^2$, and, as before, $f$ is $C^{1}$.

The classification of solutions in unbounded domains is a very active topic that is related to many fundamental problems.
A vast literature is devoted to the classification of stable patterns in the particular case of the entire space $\Omega=\R^n$ and the Allen-Cahn nonlinearity $f(u)=u-u^3$: it is known that stable patterns are necessarily planar in dimensions $n=1,2$~\cite{Berestycki1997b} and that it is not true in dimensions $n\geq8$~\cite{Pacard2013}. The question remains open for the intermediate dimensions $3\leq n\leq 7$. These results are closely related to De Giorgi's conjecture. We refer the reader to~\cite{Wei2018} for a state of the art on this question.

In a previous paper~\cite{Nordmann2019a}, we studied whether~\autoref{ANG_th:Intro_CHM} extends to unbounded domains. 
We showed that, if the domain is convex and possibly unbounded, there exist no patterns to~\eqref{EquationSemilineaire_Unbounded} which are stable non-degenerate (i.e., with $\lambda_0>0$). 
We also gave a classification of possibly degenerate stable solutions (i.e., with $\lambda_0\geq0$) if the domain is convex and further satisfies the growth condition at infinity
\begin{equation}\label{GrowthCondition}
\vert \Omega\cap \{\vert x\vert\leq R\}\vert= O(R^2),\qquad \text{when }R\to+\infty.
\end{equation}
More precisely, under assumption~\eqref{GrowthCondition}, if the domain is convex and is not a straight cylinder (i.e., the domain is not of the form $\Omega=\R\times\omega$, with $\omega\subset\R^{n-1}$), then there exists no patterns to~\eqref{EquationSemilineaire_Unbounded}. If the domain is a convex straight cylinder, stable solutions are either constant or monotonic planar solutions connecting two stable roots $(z_1,z_2)$ of $f$ such that $\int_{z_1}^{z_2}f=0$.

In particular, this extends the one-dimensional symmetry of stable patterns in $\R^2$ to any convex domain satisfying~\eqref{GrowthCondition}.

We point out that none of the above holds for unbounded solutions: indeed, $u(x):=e^x$ is a solution of $-u''=-u$ in $\R$ for which $\lambda_0>0$. It justifies why we only consider bounded solutions in~\eqref{EquationSemilineaire_Unbounded}.

\paragraph*{}
The present article improves the results of~\cite{Nordmann2019a} and thus extends some results on De Giorgi's conjecture (namely the  one-dimensional symmetry of stable patterns in $\R^2$) to a larger class of domains.

In view of~\autoref{ANG_Intro_th:GammaStability}, it is reasonnable to expect that the results of~\cite{Nordmann2019a}, which hold for stable solutions in convex domains, remain true for \emph{Robin-curvature-stable solutions} in any (possibly non-convex) domain. This is what we state in~\autoref{th:Unbounded_Domains} below.

We notice that for $u$ a solution of~\eqref{EquationSemilineaire_Unbounded}, the definition of Robin-curvature-stability through the sign of $\lambda_\gamma$ in~\autoref{Definition_Lambda_Gamma} remains licit. By convention, we set $\gamma=0$ if $\Omega=\R^n$.
We also define $\lambda_\gamma$ on the truncated domains
\begin{equation}\label{DefLambda_R}
\la_\gamma^R:=\inf\limits_{\substack{\psi\in H^1(\Omega_R)\cap H^1_0(B_R)\\ \Vert \psi\Vert_{{L}^2}=1}} \mathcal{F}_\gamma(\psi),\qquad \forall R>0,
\end{equation}
where $\Omega_R:=\Omega\cap B_R$ and $B_R$ is the ball of radius $R$ centered at the origin.
Here, the infimum is taken over functions $\psi\in H^1(\Omega_R)$ whose support is included in $B_R$. The fact that we impose Dirichlet boundary conditions on $\Omega\cap\D B_R$ ensures that $R\mapsto \lambda_R$ is decreasing. 
It is proved in~\cite[Theorem~2.1]{Rossi2020} that
\begin{equation}
\lim_{R\to+\infty}\lambda_\gamma^R=\lambda_\gamma.
\end{equation}

The following result extends~\autoref{ANG_Intro_th:GammaStability} to unbounded domains under either a non-degeneracy assumption on the Robin-curvature-stability, or a control on the growth of the domain at infinity.
\begin{theorem}\label{th:Unbounded_Domains}
Let $u$ be a solution of~\eqref{EquationSemilineaire_Unbounded} and assume that $u$ is Robin-curvature-stable (i.e. $\lambda_\gamma\geq0$).
\begin{enumerate}
\item If the Robin-curvature-stability of $u$ is not too degenerate, i.e.,
\begin{equation}\label{Robin_Curvature_stability_not_too_degenerate}
\liminf\limits_{R\to+\infty} R^2\lambda_\gamma^R=+\infty
\end{equation}
(in particular, if $\lambda_\gamma>0$), then $u$ is constant.
\item If $\Omega$ satisfies the growth condition at infinity~\eqref{GrowthCondition} and is not a straight cylinder (i.e. $\Omega$ is not of the form $\R\times \omega$ with $\omega\subset\R^{n-1}$), then $u$ is constant.
\item If $\Omega=\R\times\omega$ satisfies the growth condition at infinity~\eqref{GrowthCondition}, then $u$ is either constant, or is a monotonic planar solution connecting two stable roots $(z_1,z_2)$ of $f$ such that $\int_{z_1}^{z_2}f=0$.
\end{enumerate}
\end{theorem}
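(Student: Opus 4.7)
The plan is to extend the proof of \autoref{ANG_Intro_th:GammaStability} to the unbounded setting via a cutoff argument on the exhaustion $\Omega_R = \Omega \cap B_R$. Testing the Robin-curvature functional $\mathcal{F}_\gamma^R$ against $\psi = \chi_R \vert\nabla u\vert$ for a smooth cutoff $\chi_R$ with $\chi_R \equiv 1$ on $B_{R/2}$ and $\vert\nabla \chi_R\vert \leq C/R$, the Bochner--Sternberg--Zumbrun identity combined with the refined Kato inequality $\vert D^2 u\vert^2 \geq \vert\nabla \vert\nabla u\vert\vert^2$ and the tangential boundary bound $II(\nabla u, \nabla u) \geq \gamma \vert\nabla u\vert^2$ (which uses the Neumann condition to place $\nabla u$ in the tangent space of $\D\Omega$) should yield
\begin{equation*}
\mathcal{F}_\gamma^R(\chi_R\vert\nabla u\vert) \;\leq\; \frac{C}{R^2}\int_{\Omega_R} \vert\nabla u\vert^2.
\end{equation*}
By the definition of $\lambda_\gamma^R$ in~\eqref{DefLambda_R}, the left-hand side is at least $\lambda_\gamma^R \int_{\Omega_{R/2}} \vert\nabla u\vert^2$, which is the master estimate driving the three assertions.

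For part 1, the assumption $R^2 \lambda_\gamma^R \to +\infty$ together with the bound $\int_{\Omega_R} \vert\nabla u\vert^2 \leq C\vert\Omega_R\vert$ (obtained by testing~\eqref{EquationSemilineaire_Unbounded} against $u$ and using $\Vert u\Vert_\infty < \infty$) directly forces $\int_{\Omega_{R/2}} \vert\nabla u\vert^2 \to 0$, so $u$ is constant. For parts 2 and 3, one uses only $\lambda_\gamma \geq 0$ together with the growth condition~\eqref{GrowthCondition}, under which the right-hand side of the master estimate remains bounded as $R \to \infty$, forcing equality in the limit in each of the ingredients above. Rigidity in the Kato inequality then constrains $u$ to have essentially one-dimensional symmetry with parallel level sets, while equality in $II(\nabla u, \nabla u) \geq \gamma \vert\nabla u\vert^2$ at every boundary point forces these level sets to meet $\D\Omega$ orthogonally along an eigendirection of $II$ with eigenvalue $\gamma$. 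Together, these rigidity statements force $\Omega$ to be a straight cylinder $\R \times \omega$ with $u$ depending only on the axial variable, which establishes part 2 by contraposition and reduces part 3 to the classification of bounded stable solutions of $u'' + f(u) = 0$ on $\R$ --- namely, the constants and the monotonic heteroclinic connections between stable roots $(z_1, z_2)$ with $\int_{z_1}^{z_2} f = 0$.

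The main obstacle is the degenerate case $\lambda_\gamma = 0$ in parts 2 and 3, where the cutoff argument only gives an $O(1)$ bound on the right-hand side and does not by itself yield $\nabla u \equiv 0$. One must instead carefully track the equality cases, which is delicate because, unlike in~\cite{Nordmann2019a} where $\gamma \geq 0$ was used directly, here the Robin-curvature boundary term $\int_{\D\Omega}\gamma\psi^2$ is indefinite and can only be controlled through the hypothesis $\lambda_\gamma^R \geq 0$. The logarithmic/quadratic cutoff refinement from~\cite{Nordmann2019a} should still apply, provided one verifies that the boundary rigidity $II(\nabla u, \nabla u) = \gamma \vert\nabla u\vert^2$ can be extracted from a limiting stability inequality despite the possible sign change of $\gamma$. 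Once this geometric rigidity is in hand, the cylindrical structure follows by propagating the parallel-level-set property along $\D\Omega$ using connectedness, and the one-dimensional classification concludes the proof.
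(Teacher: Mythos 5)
Your treatment of part~1 is close in spirit to the paper's, but the claim that $R^2\lambda_\gamma^R\to+\infty$ together with $\int_{\Omega_R}\vert\nabla u\vert^2\leq C\vert\Omega_R\vert$ ``directly forces'' $\int_{\Omega_{R/2}}\vert\nabla u\vert^2\to0$ is a non sequitur as stated: the master estimate only controls the ratio $\int_{\Omega_{R/2}}\vert\nabla u\vert^2\big/\int_{\Omega_R}\vert\nabla u\vert^2$, so you must iterate it along the dyadic scales $R, 2R, 4R, \dots$ and pit the geometric decay $(R^2\lambda_\gamma^R/4)^{-j}$ against the polynomial growth $\int_{\Omega_{2^jR}}\vert\nabla u\vert^2\leq K(2^jR)^n$ to reach a contradiction. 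This iteration is exactly what the paper does and is the missing half-page of your argument.

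For parts~2 and~3 there is a genuine gap, which you half-acknowledge. You propose to extract rigidity by passing to the limit $R\to\infty$ and ``forcing equality'' in the Kato/Sternberg--Zumbrun inequality and in the curvature bound $II(\nabla u,\nabla u)\geq\gamma\vert\nabla u\vert^2$. But when $\lambda_\gamma=0$ the cutoff master estimate reduces to $0\leq \tfrac{C}{R^2}\int_{\Omega_R}\vert\nabla u\vert^2$, which under~\eqref{GrowthCondition} is merely $O(1)$ as $R\to\infty$ and therefore yields no equality in the limit; you have no mechanism to conclude that the interior term $\vert\nabla^2u\vert^2-\vert\nabla\vert\nabla u\vert\vert^2$ and the boundary defect $II(\nabla u,\nabla u)-\gamma\vert\nabla u\vert^2$ both vanish identically. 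The paper instead introduces a Liouville-type lemma built on the \emph{generalized principal eigenfunction} $\varphi>0$ solving $-\Delta\varphi-f'(u)\varphi=\lambda_\gamma\varphi$ with $\D_\nu\varphi+\gamma\varphi=0$: one sets $\sigma=v/\varphi$, shows $\sigma\,\nabla\!\cdot(\varphi^2\nabla\sigma)\geq0$, and uses a logarithmic cutoff with the growth condition to conclude $\sigma$ is constant, hence $v_i=C_i\varphi$ for every $i$. This gives the sharp conclusion $\nabla u=\mathbf{C}\varphi$ directly --- much stronger than ``parallel level sets'' --- and then the Neumann condition forces $\mathbf{C}\cdot\nu(x_0)=0$ at every boundary point, so $\mathbf{C}=0$ unless $\Omega$ is a straight cylinder. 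Your proposal lacks this tool; without it, the degenerate case is not closed, and the step ``these rigidity statements force $\Omega$ to be a straight cylinder'' has the logic backwards (being a cylinder is the hypothesis distinguishing cases, not a conclusion derived from flatness of $u$). An additional subtlety the paper handles that your sketch skips: to apply the Liouville lemma to each $v_i$ one must first establish the \emph{individual} boundary inequality $v_i(\D_\nu v_i+\gamma v_i)\leq0$, which does not follow from the summed inequality alone; the paper resolves this by a short bootstrap showing each term is simultaneously $\geq0$ and summing to $\leq0$.
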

We recall that in a convex domain, owing to~\eqref{MonotonieLambdaGammaPositif}, stability implies Robin-curvature-stability. The above theorem therefore contains the results of~\cite{Nordmann2019a} as special cases.

Assumption~\eqref{Robin_Curvature_stability_not_too_degenerate} in the first assertion of~\autoref{th:Unbounded_Domains} encompasses the case of nondegenerate Robin-curvature-stability $\lambda_\gamma>0$. Therefore, this result improves~\cite[Theorem~1.4]{Nordmann2019a} which only deals with the case of nondegenerate stability ($\lambda_0>0$).

The second and third assertions of the above theorem give a complete classification of Robin-curvature-stable patterns in domains satisfying~\eqref{GrowthCondition}. This is a refinement of~\cite[Theorem~1.5]{Nordmann2019a} which deals with stable solutions in convex domains. We point out that assumption~\eqref{GrowthCondition} is satisfied by any subset of $\R^2$ and also by some subsets of $\R^n$, $n>2$, which size do not grow too fast at infinity.

If the domain is a straight cylinder, then $\gamma\leq0$, and so $\lambda_\gamma\leq\lambda_0$: in this case, Robin-curvature-stability implies stability. If the domain is a convex straight cylinder, then $\lambda_\gamma=\lambda_0$ and the third assertion in  \autoref{th:Unbounded_Domains} coincides with~\cite[Theorem~1.5~\textit{2.}]{Nordmann2019a}. 
Planar patterns might exist in such domains. For example, the Allen-Cahn equation in $\R$, $-u''=u(1-u^2)$
admits the explicit solution 
$u : x\mapsto \tanh\frac{x}{\sqrt{2}}$ which is stable degenerate (i.e. $\lambda_\gamma=\lambda_0=0$).

\paragraph*{}

The restriction~\eqref{Robin_Curvature_stability_not_too_degenerate} on the growth of the domain at infinity is believed to be sharp, see~\cite{Nordmann2019a} for more details. In~\cite{Nordmann2019a} we asked whether an assumption of \emph{strict convexity} of the domain allows to relax~\eqref{Robin_Curvature_stability_not_too_degenerate} for the non-existence of stable patterns. 
Unfortunately, \autoref{th:Unbounded_Domains} does not provide a positive answer to this question, because we may not have that $\lambda_\gamma>\lambda_0$ in general, even if the domain is strictly convex.

Let give more details.
For $\alpha\geq 0$, consider the principal eigenvalue of the $\alpha$-Robin Laplacian
\begin{equation}\label{def:mu_alpha}
\mu_\alpha:= \inf\limits_{\substack{\psi\in H^1(\Omega)\\                                                                                                                                                                                                                                                                                                                                                                                                          \Vert \psi\Vert_{{L}^2}=1}} \int_\Omega \vert \nabla \psi\vert^2+\alpha\int_{\D\Omega}\psi^2.
\end{equation}
If the domain is bounded, it is classical that $\alpha\mapsto\mu_\alpha$ is strictly increasing (see e.g. the last part of the proof of~\autoref{th:MonotonieStricteLambdaGammaPositif}). 
If the domain is unbounded, we still have that $\alpha\mapsto\mu_\alpha$ is nondecreasing, however, the strict monotonicity does not hold in general. Indeed, for $\Omega=(0,+\infty)$, we have $\mu_\alpha=0$ for all $\alpha\geq0$. To prove this, notice that the function $\psi(x)=\alpha x+1$ is positive on $[0,+\infty)$ and satisfies $\Delta \varphi=0$ and $-\varphi'(0)+\alpha\varphi(0)=0$; hence $\varphi$ is a positive supersolution of the Robin Laplacian in $\Omega$, which classically implies $\mu_\alpha\leq0$.

The fact that $\alpha\mapsto \mu_\alpha$ is not strictly monotonic suggests that, even if we assume that the domain is strictly convex (i.e. $\gamma>0$), we cannot guarantee that the strict inequality $\lambda_\gamma>\lambda_0$ holds in general, contrarily to what is stated in \autoref{th:MonotonieStricteLambdaGammaPositif} for bounded domains. Hence, even in a strictly convex domain ($\gamma>0$), a stable degenerate pattern ($\lambda_0=0$) can have a degenerate Robin-curvature-stability (i.e. $\lambda_\gamma=0$). 

\paragraph*{}
The following result states that~\autoref{th:CoroInstablePRELIMINARY} holds in unbounded domain.
\begin{corollary}\label{th:CoroInstablePRELIMINARY_Unbounded}
Let $u$ be a pattern of~\eqref{EquationSemilineaire_Unbounded}. For any $a\geq 1$, we have that
\begin{equation}\label{RelationStabilityGeometryMagnitude_Unbounded}
(a-1)\lambda_0\leq \sup f'-\mu_{a\gamma},
\end{equation}
with $\mu_a$ from~\autoref{Def:CurvatureGamma}.
Consequently,
\begin{enumerate}
\item If $\mu_\gamma> \sup f'$, there exists no (possibly unstable) pattern to~\eqref{EquationSemilineaire_Unbounded}.
\item If $\mu_{a\gamma}> \sup f'$ for some $a\geq1$, there exists no stable pattern to~\eqref{EquationSemilineaire_Unbounded}.
\end{enumerate}
\end{corollary}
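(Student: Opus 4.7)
The plan is to adapt the bounded-domain argument behind \autoref{th:CoroInstablePRELIMINARY} by localizing via a smooth cutoff, and then to pass to the limit along a carefully chosen sequence of radii. Fix $a\geq 1$ and a smooth radial cutoff $\chi_R$ with $\chi_R\equiv 1$ on $B_R$, $\mathrm{supp}\,\chi_R\subset B_{2R}$, and $|\nabla\chi_R|\leq C/R$. The test function will be $\psi_R:=\chi_R|\nabla u|$, which, strictly speaking, I would regularize as $\chi_R\sqrt{|\nabla u|^2+\eps}$ and then send $\eps\to 0$ in order to freely use Kato's inequality at critical points of $u$.

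The first step is to derive a localized version of the Bochner-based identity which is the engine of the Casten--Holland--Matano argument. Starting from $\tfrac12\Delta|\nabla u|^2=|\nabla^2 u|^2-f'(u)|\nabla u|^2$, testing against $\chi_R^2$, and integrating by parts, the boundary term simplifies using that $\nabla u$ is tangent to $\D\Omega$, which gives $\tfrac12\D_\nu|\nabla u|^2=-II(\nabla u,\nabla u)\leq -\gamma|\nabla u|^2$. Combined with Kato's inequality $|\nabla|\nabla u||^2\leq|\nabla^2 u|^2$, a direct expansion of $|\nabla\psi_R|^2=|\nabla\chi_R|^2|\nabla u|^2+\tfrac12\nabla\chi_R^2\cdot\nabla|\nabla u|^2+\chi_R^2|\nabla|\nabla u||^2$ yields
\[
\int_\Omega|\nabla\psi_R|^2-\int_\Omega f'(u)\psi_R^2+\int_{\D\Omega}\gamma\psi_R^2\;\leq\;\int_\Omega|\nabla u|^2|\nabla\chi_R|^2.
\]

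Next, testing the Rayleigh-quotient characterizations of $\lambda_0$ and $\mu_{a\gamma}$ with the same $\psi_R$ gives $\int|\nabla\psi_R|^2-f'(u)\psi_R^2\geq\lambda_0\int\psi_R^2$ and $\int|\nabla\psi_R|^2+a\int_{\D\Omega}\gamma\psi_R^2\geq\mu_{a\gamma}\int\psi_R^2$. Adding $(a-1)$ times the first to the second, and then using $a$ times the localized Bochner inequality above to bound $a\int|\nabla\psi_R|^2+a\int_{\D\Omega}\gamma\psi_R^2$ from above by $a\int f'(u)\psi_R^2+a\int|\nabla u|^2|\nabla\chi_R|^2$, the $f'(u)$ terms telescope and leave
\[
\bigl[(a-1)\lambda_0+\mu_{a\gamma}\bigr]\int_\Omega\psi_R^2 \;\leq\; \sup f' \int_\Omega\psi_R^2 + a\int_\Omega|\nabla u|^2|\nabla\chi_R|^2.
\]

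Finally, I would pass to the limit along a sequence $R_k\to+\infty$ chosen so that the error ratio $\frac{\int_{\Omega\cap(B_{2R_k}\setminus B_{R_k})}|\nabla u|^2}{R_k^2\int_{\Omega\cap B_{R_k}}|\nabla u|^2}$ tends to $0$. This is the main technical obstacle, and I would handle it by a standard Moser/dyadic selection: uniform $C^2$-regularity of $\D\Omega$ together with $u\in L^\infty$ yields, via Schauder estimates, a uniform bound on $|\nabla u|$, so $\int_{\Omega\cap B_R}|\nabla u|^2=O(R^n)$; if the error failed to tend to zero along every sequence, then the mass of $|\nabla u|^2$ in $\Omega\cap B_R$ would grow faster than any polynomial, a contradiction. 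Since $u$ is a pattern, $\int\psi_{R_k}^2>0$ for $R_k$ large, and dividing the displayed inequality by $\int\psi_{R_k}^2$ and letting $k\to\infty$ gives $(a-1)\lambda_0\leq \sup f'-\mu_{a\gamma}$, which is~\eqref{RelationStabilityGeometryMagnitude_Unbounded}. The two non-existence assertions then follow immediately: assertion~1 is the case $a=1$, which yields $\mu_\gamma\leq\sup f'$ and contradicts the strict hypothesis $\mu_\gamma>\sup f'$; assertion~2 combines $\lambda_0\geq 0$ with the displayed inequality to give $\mu_{a\gamma}\leq\sup f'$, contradicting $\mu_{a\gamma}>\sup f'$. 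Note that the inequality obtained is weak (unlike its bounded-domain counterpart) precisely because of the cutoff error, which is why the non-existence hypotheses are strict.
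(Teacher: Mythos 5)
Your proof is correct, but it takes a genuinely different route from the paper's. The paper disposes of this corollary in two lines: it invokes Lemma~\ref{th:Lemma_lambdaGammaInequalityBreakDown} (noting it survives in unbounded domains) to get $a\lambda_\gamma\geq (a-1)\lambda_0+\mu_{a\gamma}-\sup f'$, and then invokes the first assertion of Theorem~\ref{th:Unbounded_Domains} to conclude that a pattern must have $\lambda_\gamma\leq 0$, whence the claim. You instead build a self-contained argument that bypasses $\lambda_\gamma$ and Theorem~\ref{th:Unbounded_Domains} altogether: you localize the Bochner identity with $\psi_R=\chi_R|\nabla u|$, use Kato's inequality to collapse the component-wise sum $\sum_i\mathcal{F}_\gamma(v_i)$ into a statement about $|\nabla u|$ (this is the Sternberg--Zumbrun flavor of the paper's alternative proof in Section~5.2, rather than the $v_i$-by-$v_i$ computation used in Lemma~\ref{Lemma_1_Unbounded}), test the Rayleigh quotients for $\lambda_0$ and $\mu_{a\gamma}$ directly, and then kill the cutoff error along a dyadic sequence. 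Your dyadic selection — contradiction against $I(R)=O(R^n)$ if the error ratio were bounded below — is an equivalent implementation of the geometric iteration the paper runs in the proof of Theorem~\ref{th:Unbounded_Domains} (cf.~inequality~\eqref{Contradiction_Preuve_Unbounded}). What the paper's route buys is modularity: once Theorem~\ref{th:Unbounded_Domains} is in hand, the corollary is immediate. What your route buys is transparency about where the weak (rather than strict) inequality in~\eqref{RelationStabilityGeometryMagnitude_Unbounded} comes from: it is exactly the residual cutoff error $a\int|\nabla u|^2|\nabla\chi_R|^2/\int\psi_R^2$, which you can only make vanish in the limit, whereas the bounded case has no such error and delivers a strict inequality. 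The regularization of $|\nabla u|$ by $\sqrt{|\nabla u|^2+\eps}$ to justify Kato's inequality near critical points, and the use of the strong maximum principle to ensure $\int\psi_{R_k}^2>0$, are both handled appropriately.
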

This gives a criterion for the non-existence of patterns in unbounded domains, involving the stability of the solution, the geometry of the domain, and the magnitude of the non-linearity.

We emphasize that~\eqref{RelationStabilityGeometryMagnitude_Unbounded} features a large inequality, whereas~\eqref{RelationStabilityGeometryMagnitude}  features a strict inequality. Accordingly, the condition for the non-existence of pattern in the above theorem involves a large inequality, contrarily to~\autoref{th:CoroInstablePRELIMINARY}.
In fact, sufficient conditions for the non-existence of patterns are
\begin{equation}
\liminf\limits_{R\to+\infty}R^2\left(\mu_\gamma- \sup f'\right)>0,
\end{equation}
or
\begin{equation}
\mu_\gamma\geq \sup f'\quad\text{and}\quad\Omega\text{ satisfies }~\eqref{GrowthCondition}.
\end{equation}

Let us now adapt~\autoref{th:CoroStable} to unbounded domains. Note that  $\nabla u$ and $f(x,u(x))$ may not belong to $L^2$.
We thus have to consider a cut-off function:
for $R>0$ and $\delta>0$, set
\begin{equation}\label{DefCutOff}
\chi_R(x):=\chi\left(\frac{\vert x\vert}{R}\right), \quad \forall x\in\R^n,
\end{equation} 
with $\chi$ a smooth nonnegative function such that $${\chi(z)=
\left\{\begin{aligned}
&1 &&\text{if }0\leq z\leq 1\\
&0 &&\text{if } z\geq 2
\end{aligned}\right.},\qquad\qquad\vert \chi'\vert\leq 2.$$
Let us also define $\mu_\gamma^R$ the principal eigenvalue of the Robin-curvature Laplacien on the truncated domain $\Omega_R:=\Omega\cap B_R$ as 
\begin{equation}\label{DefMu_R}
\mu_\gamma^R:=\inf\limits_{\substack{\psi\in H^1(\Omega_R)\cap H^1_0(B_R)\\ \Vert \psi\Vert_{{L}^2}=1}} \mathcal{G}_\gamma(\psi),\qquad \forall R>0.
\end{equation}

\begin{theorem}\label{th:CoroStable_Unbounded}
Let $u$ be a solution of
\begin{equation}\label{EquationSemilineaire_Hetero_Unbounded}
    \left\{
    \begin{aligned}
        &-\Delta u(x)=f(x,u(x)), &&\forall x\in\Omega,\\ 
        &\D_\nu u(x)=0, &&\forall x\in\D\Omega,\\
        &u\in  L^\infty\cap C^{2} \left(\overline{\Omega}\right),
    \end{aligned}
    \right.
\end{equation}
where $f\in C^{1}\left(\Omega\times\R,\R\right)$ and $\Omega$ is not necessarily bounded. Then
\begin{equation}\label{CacciopolyInequality_Unbounded}
\mu_\gamma^R \int_\Omega\chi_R^2\vert\nabla u\vert^2\leq \int_\Omega\big(\vert \nabla \chi_R\vert \vert\nabla u\vert + \chi_Rf(x,u)\big)^2 dx.
\end{equation}
\end{theorem}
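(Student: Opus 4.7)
The strategy is a localized version of the proof of~\autoref{th:CoroStable}: the role played there by the partial derivatives $\partial_i u$ as test functions in the Rayleigh quotient defining $\mu_\gamma$ is played here by $\chi_R\partial_i u$, which is an admissible test in the truncated Rayleigh quotient defining $\mu_\gamma^R$ because the cutoff vanishes near the outer boundary. First I would apply the variational characterization of $\mu_\gamma^R$ with the test function $\chi_R\partial_i u$ for each $i=1,\dots,n$, and sum over $i$. Expanding $\nabla(\chi_R\partial_i u)=\chi_R\nabla\partial_i u+\partial_i u\,\nabla\chi_R$ and using the pointwise identities $\sum_i(\partial_i u)^2=|\nabla u|^2$, $\sum_i|\nabla\partial_i u|^2=|D^2u|^2$, and $\sum_i\partial_i u\,\nabla\partial_i u=\tfrac{1}{2}\nabla|\nabla u|^2$, one obtains the localized inequality
\begin{equation*}
\mu_\gamma^R\int_\Omega\chi_R^2|\nabla u|^2\leq\int_\Omega\chi_R^2|D^2u|^2+\int_\Omega\chi_R\nabla\chi_R\cdot\nabla|\nabla u|^2+\int_\Omega|\nabla\chi_R|^2|\nabla u|^2+\int_{\D\Omega}\gamma\chi_R^2|\nabla u|^2.
\end{equation*}

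Next, I would treat the Hessian term via a Bochner--Reilly identity carrying the cutoff. Integrating $\int\chi_R^2(\partial_i\partial_j u)^2$ by parts twice in the index $i$, using $\sum_j\partial_i\partial_j u\,\partial_j u=\tfrac{1}{2}\partial_i|\nabla u|^2$ on the term linear in $\nabla\chi_R$, the equation $\Delta u=-f(x,u)$ on the main term, and the boundary identity $\partial_j u\,\partial_\nu\partial_j u=-\mathrm{II}(\nabla u,\nabla u)$ valid on $\D\Omega$ (it follows from differentiating $\partial_\nu u=0$ tangentially, with $\mathrm{II}$ the second fundamental form), one obtains
\begin{equation*}
\int_\Omega\chi_R^2|D^2u|^2=\int_\Omega\chi_R^2 f^2-2\int_\Omega\chi_R f\,\nabla\chi_R\cdot\nabla u-\int_\Omega\chi_R\nabla\chi_R\cdot\nabla|\nabla u|^2-\int_{\D\Omega}\chi_R^2\,\mathrm{II}(\nabla u,\nabla u).
\end{equation*}
Substituting this into the previous inequality, the cross terms $\int\chi_R\nabla\chi_R\cdot\nabla|\nabla u|^2$ cancel exactly. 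Since $\nabla u$ is tangent to $\D\Omega$ (Neumann) and $\gamma$ is the minimum principal curvature, $\gamma|\nabla u|^2\leq\mathrm{II}(\nabla u,\nabla u)$ on the boundary, so the two boundary contributions combine into a nonpositive term which I discard. A pointwise Cauchy--Schwarz bound on the remaining cross term $-2\int\chi_R f\,\nabla\chi_R\cdot\nabla u$ then completes the square and yields the claim.

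The main obstacle is the bookkeeping of cutoff-induced terms: keeping track of which cross terms arise from expanding $|\nabla(\chi_R\partial_i u)|^2$ versus which arise in the cutoff Bochner--Reilly identity, so that the key cancellation of $\int\chi_R\nabla\chi_R\cdot\nabla|\nabla u|^2$ is transparent and the three surviving interior contributions $\int\chi_R^2 f^2$, $\int|\nabla\chi_R|^2|\nabla u|^2$, and the cross term assemble into the square $(|\nabla\chi_R||\nabla u|+\chi_R f)^2$. Modulo this accounting, the argument is structurally identical to the bounded case: the only genuinely global ingredients are the Rayleigh characterization of $\mu_\gamma^R$ in the truncated domain and the curvature bound $\gamma\leq\mathrm{II}$ on tangent vectors, both intrinsic to $\Omega$ and independent of $f$, which is why the inequality holds in the non-homogeneous setting with $f=f(x,u)$.
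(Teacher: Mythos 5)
Your argument is correct and is essentially the paper's own proof, just reorganized: the paper differentiates the equation, multiplies the linearized equation by $\chi_R^2\partial_i u$, sums over $i$ and then reads off the Rayleigh quotient for $\mu_\gamma^R$ at the end, while you state the Rayleigh inequality for $\chi_R\partial_i u$ up front and supply the needed expansion of $\int_\Omega\chi_R^2|D^2u|^2$ as a separate cutoff Bochner--Reilly identity. Expanding the paper's $\nabla(\chi_R^2 v_i)\cdot\nabla v_i$ term reproduces exactly your two displayed formulas, so the cancellation of $\int_\Omega\chi_R\nabla\chi_R\cdot\nabla|\nabla u|^2$, the boundary bound $\gamma|\nabla u|^2\le\mathrm{II}(\nabla u,\nabla u)$, and the final Cauchy--Schwarz completion of the square are identical to the paper's.
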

Compared to~\autoref{th:CoroStable}, the right hand-side of~\eqref{CacciopolyInequality_Unbounded} feature an error term $\vert \nabla \chi_R\vert \vert\nabla u\vert$ which is due to the truncation.
The following corollary states that this term can somehow be neglected when $R\to+\infty$.
\begin{corollary}\label{th:Coro_Estimate_Unbounded_Limsup}
Under the same conditions, if $u$ is not constant, then
\begin{equation}
\limsup\limits_{R\to+\infty} \frac{\int_\Omega \chi_R^2 f(x,u)^2 dx}{\int_\Omega\chi_R^2\vert\nabla u\vert^2} \geq \frac{\mu_\gamma}{2}.
\end{equation}
\end{corollary}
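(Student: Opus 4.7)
The plan is to start from the truncated Caccioppoli inequality~\eqref{CacciopolyInequality_Unbounded} of \autoref{th:CoroStable_Unbounded}, separate the main term from the truncation error with an elementary inequality, and argue by contradiction through a doubling iteration that the error is asymptotically negligible along some sequence $R_n\to+\infty$. Write
\[
I_R:=\int_\Omega \chi_R^2\vert\nabla u\vert^2\,dx,\qquad J_R:=\int_\Omega \chi_R^2\,f(x,u)^2\,dx,
\]
both of which are nondecreasing in $R$; since $u$ is not constant, one has $I_R>0$ for all $R$ large enough.

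Expanding the right-hand side of~\eqref{CacciopolyInequality_Unbounded} via $(a+b)^2\leq 2a^2+2b^2$, and using the pointwise bound $\vert\nabla\chi_R\vert\leq 2/R$ together with the fact that $\vert\nabla\chi_R\vert$ is supported in the annulus $\{R\leq \vert x\vert\leq 2R\}$, on which $\chi_{2R}\equiv 1$, I would obtain
\[
\mu_\gamma^R\,I_R\,\leq\,2J_R+\frac{8}{R^2}\,I_{2R},
\]
hence, after dividing by $I_R$,
\[
\frac{J_R}{I_R}\,\geq\,\frac{\mu_\gamma^R}{2}-\frac{4}{R^2}\cdot\frac{I_{2R}}{I_R}.
\]
Since $\mu_\gamma^R\to\mu_\gamma$ as $R\to+\infty$ (by the same argument as the one yielding $\lambda_\gamma^R\to\lambda_\gamma$ in~\cite{Rossi2020}, applied with $f'\equiv 0$), it then suffices to exhibit a sequence $R_n\to+\infty$ along which $I_{2R_n}/(R_n^2\,I_{R_n})\to 0$.

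To produce such a sequence, I would argue by contradiction. Assume that there exist $c>0$ and $R^\star>0$ with $I_{R^\star}>0$, such that $I_{2R}\geq c R^2\,I_R$ for every $R\geq R^\star$. Iterating along $R_k:=2^k R^\star$ would give
\[
I_{R_k}\,\geq\, I_{R^\star}\,\prod_{j=0}^{k-1} c R_j^2\,=\,(c R^{\star 2})^k\,2^{k(k-1)}\,I_{R^\star},
\]
whose logarithm grows quadratically in $k$, i.e.\ super-polynomially in $R_k$. On the other hand, standard elliptic regularity applied to~\eqref{EquationSemilineaire_Hetero_Unbounded} (using that $\Omega$ is uniformly $C^2$, that $f\in C^1$, and that $u\in L^\infty$) provides a uniform bound $\Vert\nabla u\Vert_{L^\infty(\Omega)}<+\infty$, whence $I_R\leq \Vert\nabla u\Vert_{L^\infty}^2\,\vert B_{2R}\vert\leq C R^n$. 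This polynomial upper bound is incompatible with the super-polynomial lower bound above for $k$ sufficiently large, yielding the contradiction.

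The hard part of the argument is precisely controlling the truncation error $\vert\nabla\chi_R\vert\vert\nabla u\vert$ appearing on the right-hand side of~\eqref{CacciopolyInequality_Unbounded}: the doubling iteration above is the key mechanism that turns the failure of this error to be asymptotically negligible (along at least one sequence) into a super-polynomial growth rate of $I_R$, which is then ruled out by the uniform gradient estimate coming from elliptic regularity.
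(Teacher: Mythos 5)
Your proposal is correct and follows essentially the same route as the paper: reduce to showing that the truncation error $\int_\Omega|\nabla\chi_R|^2|\nabla u|^2$ is negligible relative to $\int_\Omega\chi_R^2|\nabla u|^2$ along some sequence $R\to\infty$, and obtain a contradiction otherwise from a doubling inequality of the form $I_{2R}\geq cR^2 I_R$. The paper simply points back to the inequality~\eqref{Contradiction_Preuve_Unbounded} and the iteration carried out in the proof of \autoref{th:Unbounded_Domains}, whereas you redo that iteration explicitly and compare the resulting super-polynomial lower bound on $I_{R_k}$ with the polynomial upper bound $I_R\leq CR^n$ coming from the uniform gradient bound; you also make explicit the convergence $\mu_\gamma^R\to\mu_\gamma$, which the paper uses implicitly. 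These are presentational refinements rather than a genuinely different argument.
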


The flatness estimates stated in \autoref{th:FlatnessEstimateRobinCurvatureCoro} and \autoref{th:FlatnessEstimateRobinCurvature} formally hold in unbounded domains. However, there is no spectral gap in unbounded domains (not even for the standard Laplacian in $\R$) and therefore these results are not relevant.

\section{Related literature}\label{sec:Literature}
In this section, we present the main known results in the literature on the existence, non-existence, and qualitative properties of patterns. We also give more details on the implications of our results.

\paragraph*{Existence of stable patterns in dumbbell domains.}
In his pioneering article~\cite{Matano1979a}, Matano constructs a counterexample to Theorem~\ref{ANG_th:Intro_CHM} by proving that~\eqref{ANG_Intro_EquationSemilineaire} admits stable patterns for some $(\Omega, f)$.
The author considers a generic \emph{bistable} nonlinearity, and a so-called \emph{dumbbell domain}, which consists in two disjointed convex sets connected by a bottleneck of width ${\eps\ll 1}$, see \autoref{ANG_IntG_fig:Dumbbell}. He proves the existence of a stable pattern if $\eps$ is small enough (and provides a quantitative estimate on the smallness of $\eps$).
\begin{figure}[!h]
    \centering\includegraphics[width=0.5\textwidth]{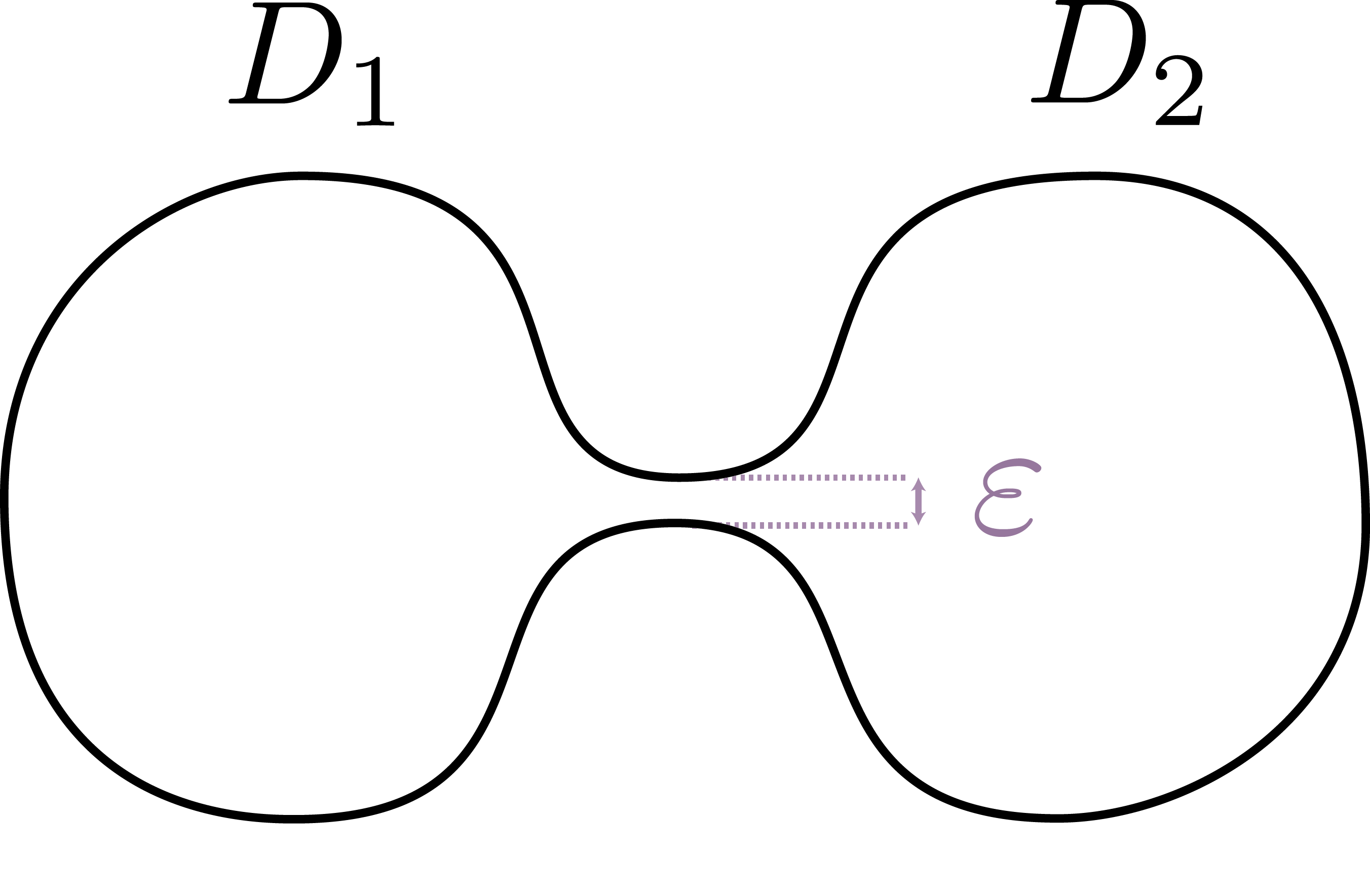}
    \caption{dumbbell domain\label{ANG_IntG_fig:Dumbbell}}
\end{figure}
This construction has been simplified and extended in many directions~\cite{Hale1984,Consul1999,Gokieli2009,Bandle2012,Jimbo1984}, and a general theoretical framework to deal with dumbbell domains is provided in~\cite{Arrieta2006,Arrieta2009,Arrieta2009a}.

The best constant in the second Poincar\'e inequality, denoted by $C_P$, somehow measures the connectivity of the domain ($C_P$ is also the inverse of the spectral gap of the Laplacian with Neumann boundary conditions).
Heuristically, in dumbbells domains, the presence of the bottleneck hinders the connectivity of the domain and so the second Poincar\'e inequality is typically very weak, i.e., $C_P\ll1$. Inequality~\eqref{UpperBoundMuPoincare} implies that $\mu_\gamma$ is also typically very small. We point out that \autoref{th:CoroInstablePRELIMINARY} contains a somehow converse statement since it guarantees the non-existence of patterns if $\mu_\gamma$ is large enough (compared to the magnitude of the nonlinearity).

We emphasize that the existence of stable patterns relies more on the presence of a bottleneck (which must be very narrow compared to $f$) than on the non-convexity of the domain. This can be seen in dimension $n=2$ by the following result, obtained after a simple rescaling from a result of Dancer.
\begin{theorem}[Theorem~5 in \cite{Dancer2004}]\label{th:Dancer}
Let $\Omega\subset\R^2$ be a smooth domain, and assume $0\in\Omega$.
Further assume that $f$ satisfies
\begin{equation}\label{Assumptionfbounded}
\left\{\begin{gathered}
 \exists (M,m)\in\R^2\text{ such that }
 \begin{cases}
 f>0 &on $(-\infty,m)$,\\
 f<0 &on $(M,+\infty)$.
 \end{cases}\\
 \text{the stable roots of $f$, denoted $(z_i)$ are simple and isolated},\\
 \forall z_i\neq z_j,\quad \int_{z_i}^{z_j}f\neq 0.
 \end{gathered}\right.
 \end{equation} 
Then, there exists no stable pattern to~\eqref{ANG_Intro_EquationSemilineaire} in the dilated domain $\eps^{-1}\Omega$, if $\eps\ll1$.
\end{theorem}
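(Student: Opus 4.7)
The plan is to argue by contradiction via a blow-down rescaling combined with a singular perturbation analysis that crucially exploits the two-dimensional setting. Suppose there exist $\eps_n\to 0^+$ and stable patterns $u_n$ of~\eqref{ANG_Intro_EquationSemilineaire} in $\eps_n^{-1}\Omega$. Define $v_n(x):=u_n(\eps_n^{-1}x)$ for $x\in\Omega$, so that $v_n$ is a non-constant stable solution of the singularly perturbed Neumann problem $-\eps_n^2\Delta v_n=f(v_n)$ in $\Omega$ with $\D_\nu v_n=0$ on $\D\Omega$. The sign conditions on $f$ at $\pm\infty$ together with the maximum principle yield a uniform $L^\infty$ bound on $v_n$, and a standard Modica-type argument provides a uniform bound on the rescaled energy $E_{\eps_n}(v):=\int_\Omega\tfrac{\eps_n}{2}|\nabla v|^2+\eps_n^{-1}F(v)$, where $F\geq0$ is a potential primitive of $-f$ whose zeros are exactly the stable roots $(z_i)$.

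Next I would extract a limit profile. By standard $BV$ compactness from Modica--Mortola theory, up to a subsequence $v_n\to v_\infty$ in $L^1(\Omega)$, with $v_\infty$ piecewise taking values in $\{z_i\}$ and with the level sets $\Omega_i:=\{v_\infty=z_i\}$ of finite perimeter. I then split into two cases. If $v_\infty\equiv z_i$ for a single stable root, then since $z_i$ is simple we have $f'(z_i)<0$; a Harnack/elliptic-regularity bootstrap upgrades $L^1$ to uniform convergence $v_n\to z_i$, and an implicit-function argument at the non-degenerate root forces $v_n\equiv z_i$ for $n$ large, contradicting that $u_n$ is a pattern.

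Otherwise $v_\infty$ has a genuine interface separating two phases $z_i\neq z_j$. Here the two-dimensional hypothesis is essential: a blow-up of $v_n$ at interior interface points (rescaling by $\eps_n$) produces an entire stable solution of $-\Delta w=f(w)$ in $\R^2$ taking values in $[m,M]$. By the two-dimensional symmetry result for stable solutions of semilinear equations in the plane (of Berestycki--Caffarelli--Nirenberg / Ghoussoub--Gui type), this blow-up is one-dimensional, and in order to realize a heteroclinic connection between $z_i$ and $z_j$ with finite energy it must satisfy the Maxwell (equal-area) condition $\int_{z_i}^{z_j}f=0$. This is precisely what hypothesis~\eqref{Assumptionfbounded} rules out, so no interior interface can exist; a boundary variant of the same blow-up (in a half-plane, using the Neumann condition and smoothness of $\D\Omega$) handles interfaces touching $\D\Omega$. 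This contradicts the existence of any interface and closes the argument.

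The main obstacle is the third step: rigorously turning the stability of $v_n$ into the Maxwell condition across interfaces. For energy minimizers this is immediate from $\Gamma$-convergence, but the hypotheses only give stable critical points, so the bridge must be built through blow-up and one-dimensional rigidity rather than through direct $\Gamma$-limit comparison. The delicate point is ensuring the blow-up centers can be chosen where the interface is non-degenerate, and that the limiting profile inherits stability (this follows by passing stability to the limit of the rescaled second-variation functionals). Once these technicalities are in place, the two-dimensional symmetry theorem and the $\int_{z_i}^{z_j}f\neq0$ assumption deliver the contradiction.
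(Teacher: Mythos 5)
This theorem is not proved in the paper: it is cited as ``Theorem~5 in \cite{Dancer2004},'' and the surrounding text says only that it is ``obtained after a simple rescaling from a result of Dancer.'' The paper's entire contribution to this statement is the change of variables $v(x):=u(\eps^{-1}x)$, which converts a non-constant stable solution of $-\Delta u=f(u)$ with Neumann conditions on the dilated domain $\eps^{-1}\Omega$ into a non-constant stable solution of the singularly perturbed problem $-\eps^2\Delta v=f(v)$ on $\Omega$. So there is no in-paper proof to compare your proposal against.

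As a blind reconstruction of Dancer's underlying theorem, your plan is in the right circle of ideas (blow-up at small diffusion, one-dimensional rigidity of entire stable solutions in $\R^2$, and the equal-area condition $\int_{z_i}^{z_j}f=0$ for bounded heteroclinics), which is indeed the mechanism at work. But several steps are asserted rather than argued, and one is incorrect as written. First, the uniform Modica--Mortola energy bound on which your BV compactness step rests is not automatic for stable, as opposed to energy-minimizing, critical points; establishing it is itself a nontrivial part of the argument. Second, in the single-phase case, the claim that ``a Harnack/elliptic-regularity bootstrap upgrades $L^1$ to uniform convergence'' is false: $L^1(\Omega)$ convergence of $v_n$ to a constant does not rule out spike layers of spatial scale $O(\eps_n)$, whose $L^1(\Omega)$ contribution is $O(\eps_n^2)$ and vanishes; such spikes are excluded by stability, not by regularity, and this must be proved before the implicit function theorem at a nondegenerate root can be invoked. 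Third, the interior and boundary blow-ups require a careful selection of centers and scales so that the limit is a genuinely non-constant stable entire (or half-plane Neumann) solution and so that stability passes to the limit; you acknowledge this gap yourself, but it is precisely there that most of the work lies. In short, the architecture matches the known route, but several of its load-bearing steps are left as gaps.
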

This theorem can be put into perspective with \autoref{Coro_Shrinking} which state the non-existence of patterns in shrinking domains.

\paragraph*{Link with minimal surfaces.}

Consider equation~\eqref{ANG_Intro_EquationSemilineaire} with the rescaled Allen-Cahn nonlinearity $f_\eps(u):=\frac{1}{\eps} (u-u^3)$.  This equation arises in the Van der Waal-Cahn-Hiliard model for phase transitions~\cite{Cahn1958}.
A series of pioneering papers~\cite{Modica1977,Modica1979,Caffarelli2006,Savin2009} establishes that when $\eps\to0$,  any level set of a sequence of stable solutions $u_\eps$ converges to a set $E$ which is a local minimizer of the perimeter functional in $\Omega$ (in other words, $\D E\cap \Omega$ is a minimal surface in $\Omega$). 

Conversely, Kohn and Sternberg~\cite{Kohn1989} show that, given such a set $E$ with a minimal perimeter in $\Omega$, there exists a sequence of stable solutions $u_\eps$ converges to $\mathds{1}_E-\mathds{1}_{\Omega\backslash E}$.
Thus, provided there exists such a (non-trivial) set $E$, it proves that there exists a stable pattern to~\eqref{ANG_Intro_EquationSemilineaire} for $f_\eps$, $\eps\ll1$. 
A typical example is the dumbbell domain, with $\D E$ located at the bottleneck of the domain (\autoref{fig:DumbbellMinimalSurface}); thus Kohn and Stenberg's pattern echoes Matano's. 
On the contrary, if the domain $\Omega$ is a convex domain, there exists no such set $E$; accordingly, from Theorem~\ref{ANG_th:Intro_CHM}, there exists no stable pattern as well. 

As mentioned in the introduction, the result of Kohn and Sternberg allows constructing stable patterns in star-shaped domains obtained as a small perturbation of convex domains, see \autoref{ANG_IntG_fig:SurfaceMinimale}. 
In dimension $n\geq 3$, such a domain can also be chosen with positive mean curvature.
In that sense, the convexity assumption in \autoref{ANG_th:Intro_CHM} is sharp.

Nevertheless, we see from~\autoref{th:CoroPerturbation} that, when the domain is close from being convex, this construction can only be achieved when $\eps\ll1$, i.e., when the nonlinearity has a large magnitude.

We also emphasize that, contrary to Matano's construction, Kohn and Sternberg's pattern relies on the fact that the Allen-Cahn nonlinearity is balanced (i.e. $\int_{-1}^{+1} f=0$). This can be seen in dimension $n=2$ by~\autoref{th:Dancer} which, after a simple rescaling, implies the non-existence of patterns for unbalanced nonlinearity of very large magnitude.


\begin{figure}[!h]
    \centering
\begin{subfigure}[b]{0.49\linewidth}
    \centering\includegraphics[width=\textwidth]{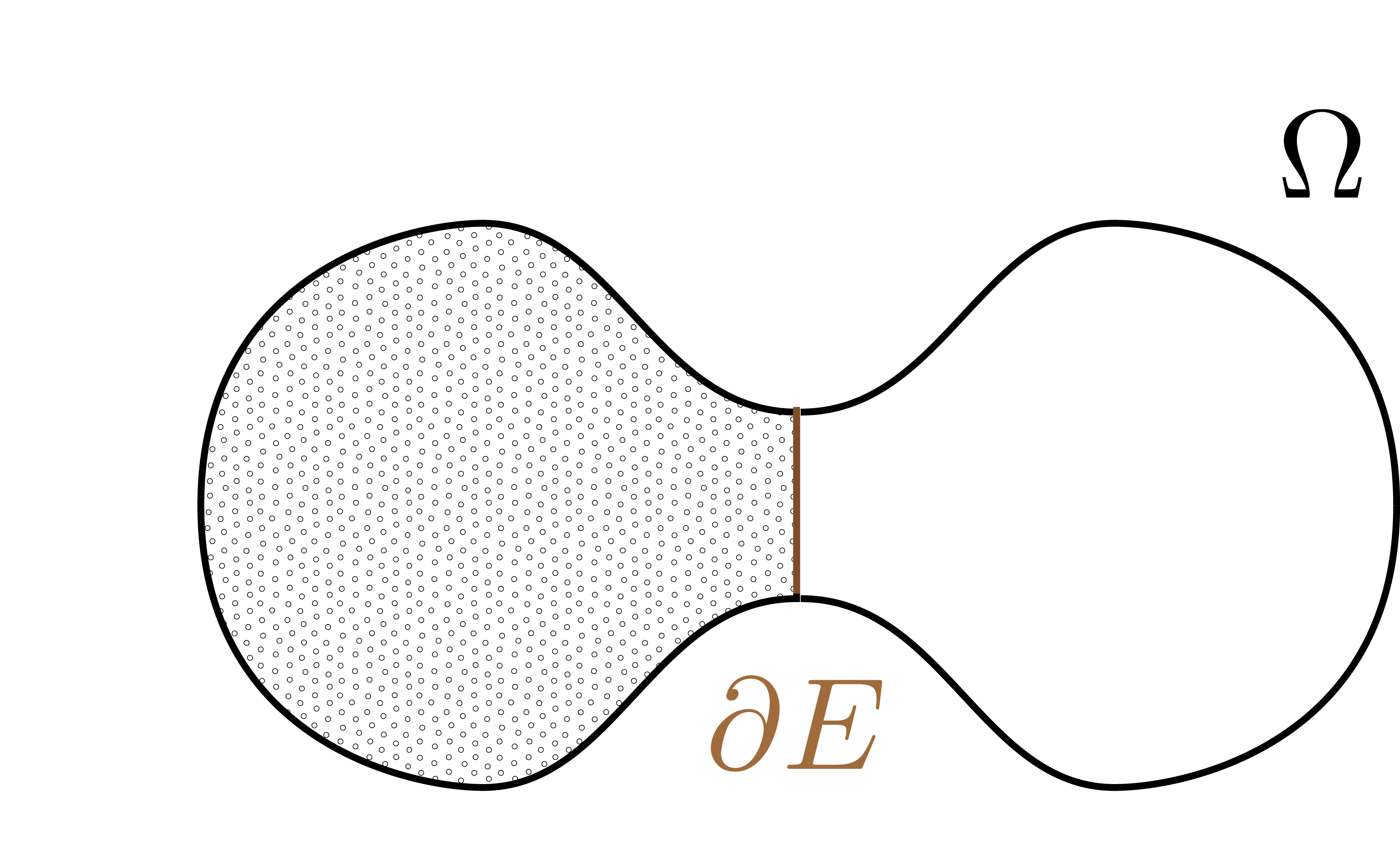}
    \caption{dumbbell domain}\label{fig:DumbbellMinimalSurface}
  \end{subfigure}
  \hfill
    \begin{subfigure}[b]{0.49\linewidth}
    \centering\includegraphics[width=\textwidth]{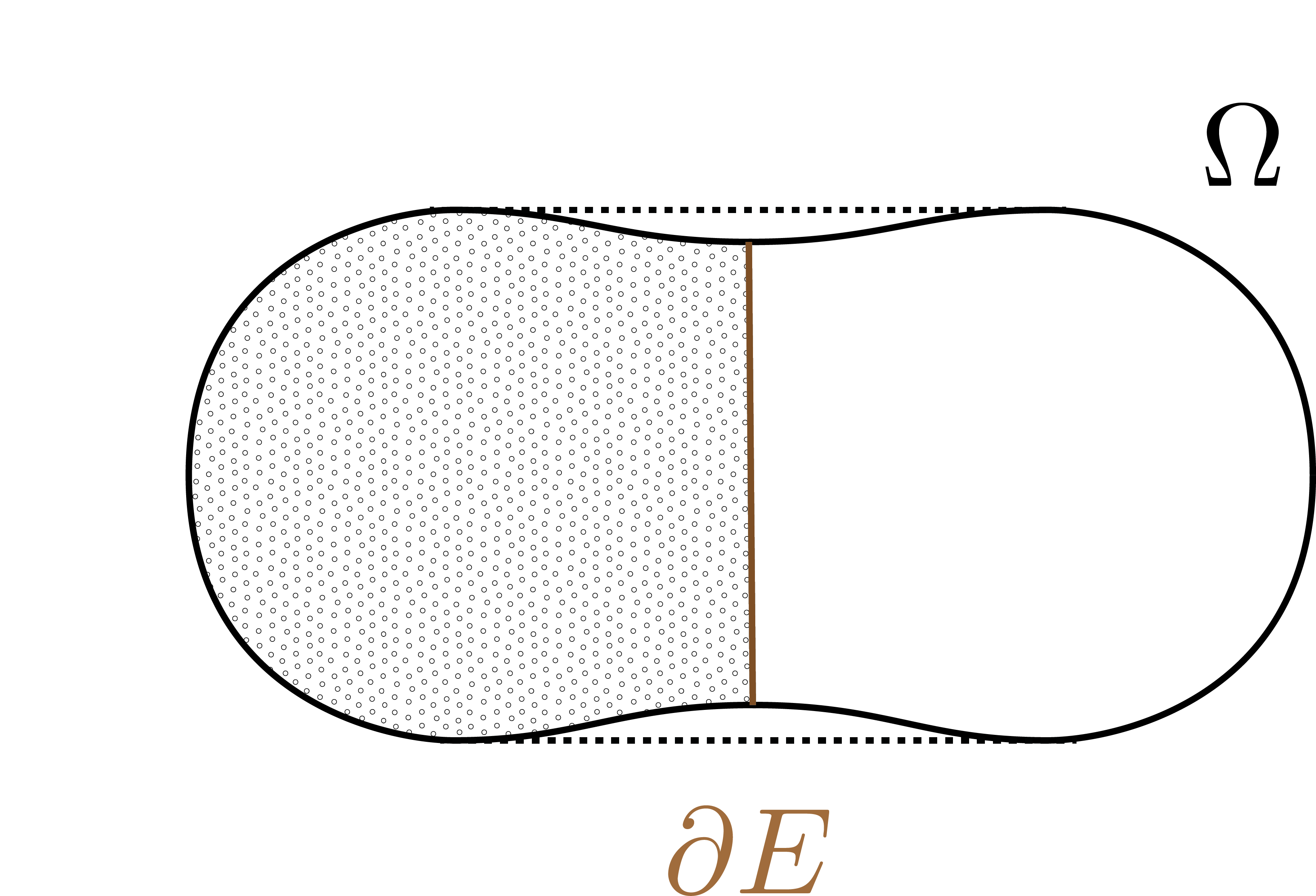}
    \caption{star-shaped domain obtained as a small perturbation of a convex domain}\label{ANG_IntG_fig:SurfaceMinimale}
  \end{subfigure}
  \newline
      \begin{subfigure}[b]{0.4\linewidth}
    \centering\includegraphics[width=\textwidth]{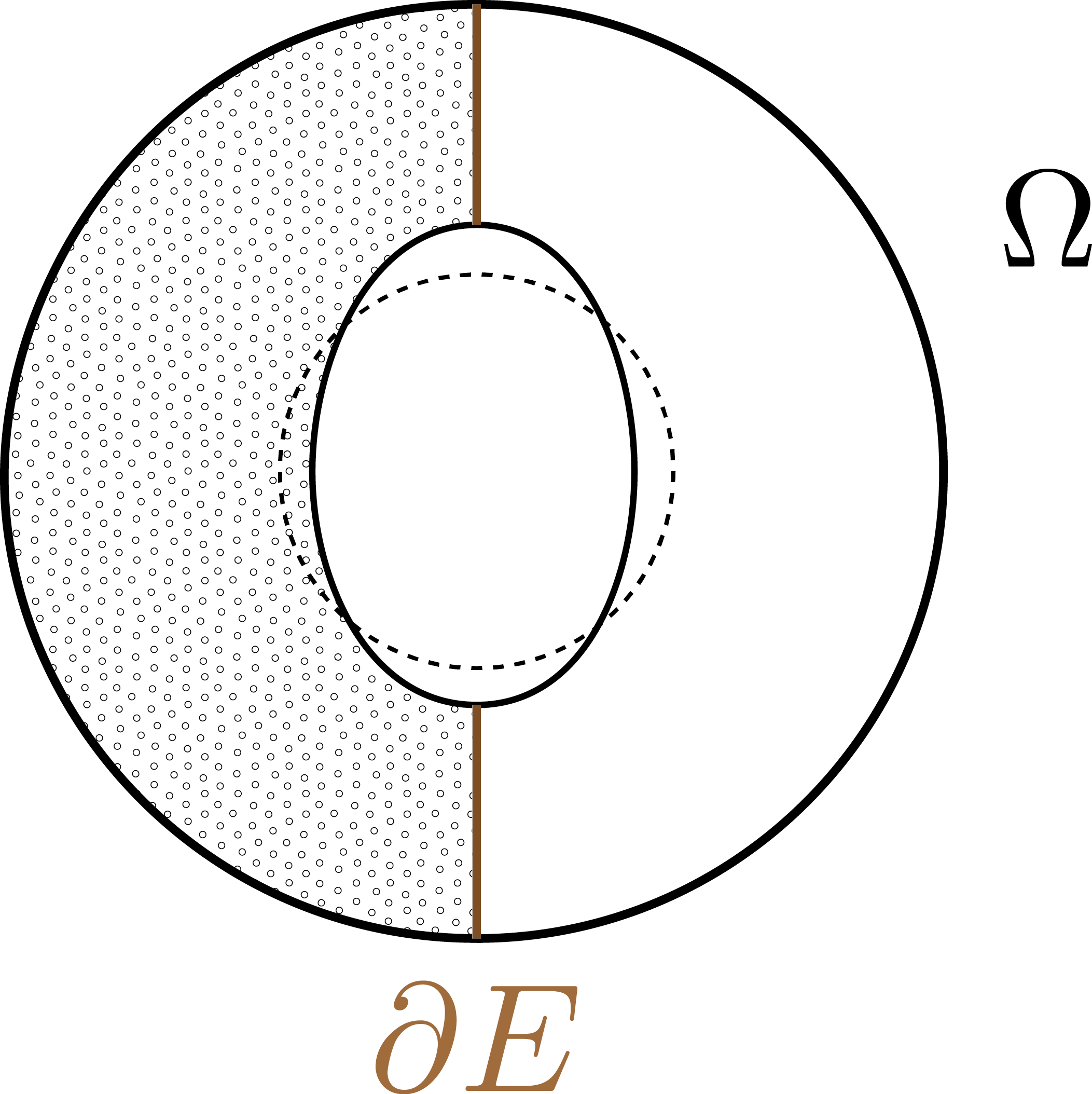}
    \caption{Small perturbation of an annulus}\label{ANG_IntG_fig:Torus}
  \end{subfigure}
  \hspace{1cm}
    \caption{Domains $\Omega$ admitting patterns for $f=\frac{1}{\eps^2}(u-u^3)$, $\eps\ll1$. The dotted area $E$ stands for a set with minimal perimeter in~$\Omega$.}
\end{figure}

\paragraph*{Angular symmetry.}
Matano~\cite{Matano1979a} also shows that, if the domain is a solid of revolution, stable patterns must be invariant with respect to rotations around the axis of revolution. If, in addition, the section is convex, there exists no stable pattern. In particular, it proves the non-existence of stable pattern for any $C^1$ nonlinearity in certain non-convex domains, such as torus or rings. 

However, it has to be noticed that the non-existence of stable pattern is not robust under small perturbation of the domain since patterns can be constructed with Kohn and Sternberg methods in domains as in \autoref{ANG_IntG_fig:Torus}. 

Matano's angular symmetry suggests that stable patterns inherit some symmetries of the domain. It is also true for invariance by translations: stable patterns in $\R\times\omega$ (where $\omega\subset\R^{n-1}$ is bounded) are invariant with respect to $x_1$~\cite{Nordmann2019a}.
Nevertheless, the symmetries by rotation and translation are the only ones to be expected, since Cartesian and angular differentiations are the only first-order differential operators commuting with the Laplacian.
It is, however, natural to consider this question for patterns on manifolds. This has been done by many authors~\cite{Jimbo1984,Rubinstein1994,Bandle2012,Punzo2013,Farina2013a,Farina2013b,Goncalves2010}.

\paragraph*{Propagation in reaction-diffusion equations. }

Many papers deal with the spreading properties of reaction-diffusion equations, which are the evolution equation associated with~\eqref{ANG_Intro_EquationSemilineaire}. In this context, the existence of stable patterns corresponds to the possibility of the blocage of an invading wave; on the other hand, the non-existence of stable patterns means that there is a complete invasion.

Let us briefly report on the article~\cite{Berestyckib} which studies the propagation in the presence of an obstacle.
Consider an exterior domain $\Omega=\R^n\setminus K$, where the \emph{obstacle} $K\subset\R^n$ is compact, and a bistable unbalanced nonlinearity, say $f(u)=u(1-u)(u-a)$, with $a<1/2$, so that $\int_0^1 f>0$. This last inequality (which is not satisfied by the Allen-Cahn nonlinearity) ensures that any traveling wave connecting $0$ to $1$ converges to $1$ in large times (roughly speaking ``$1$ is more stable than $0$''). The authors consider the special class of solution to~\eqref{ANG_Intro_EquationSemilineaire} which satisfies
\begin{equation}\label{Limit1}
\lim\limits_{\vert x\vert\to+\infty}u(x)=1.
\end{equation}
This class of solution is obtained as the limit in large times of a \emph{generalized traveling wave}.

On the one hand, if the obstacle features a bottleneck (as in a dumbbell domain), there exist stable patterns satisfying~\eqref{Limit1}. On the other hand, if the obstacle is star-shaped or axially convex, there exists no (possibly unstable) pattern satisfying~\eqref{Limit1}. We point out, however, that in such domains (see~\autoref{ANG_IntG_fig:Obstacle_Convex2}) there might exist stable patterns which do not satisfy~\eqref{Limit1} but satisfy $\lim_{\vert x\vert\to+\infty}u(x)=0$.
See also~\cite{Berestycki2015e} for similar conclusions if the domain is a cylinder with varying cross section, and~\cite{Brasseur2019,Brasseur2018} for the case of non-local diffusion.

\begin{figure}[!h]
    \centering\includegraphics[width=0.5\textwidth]{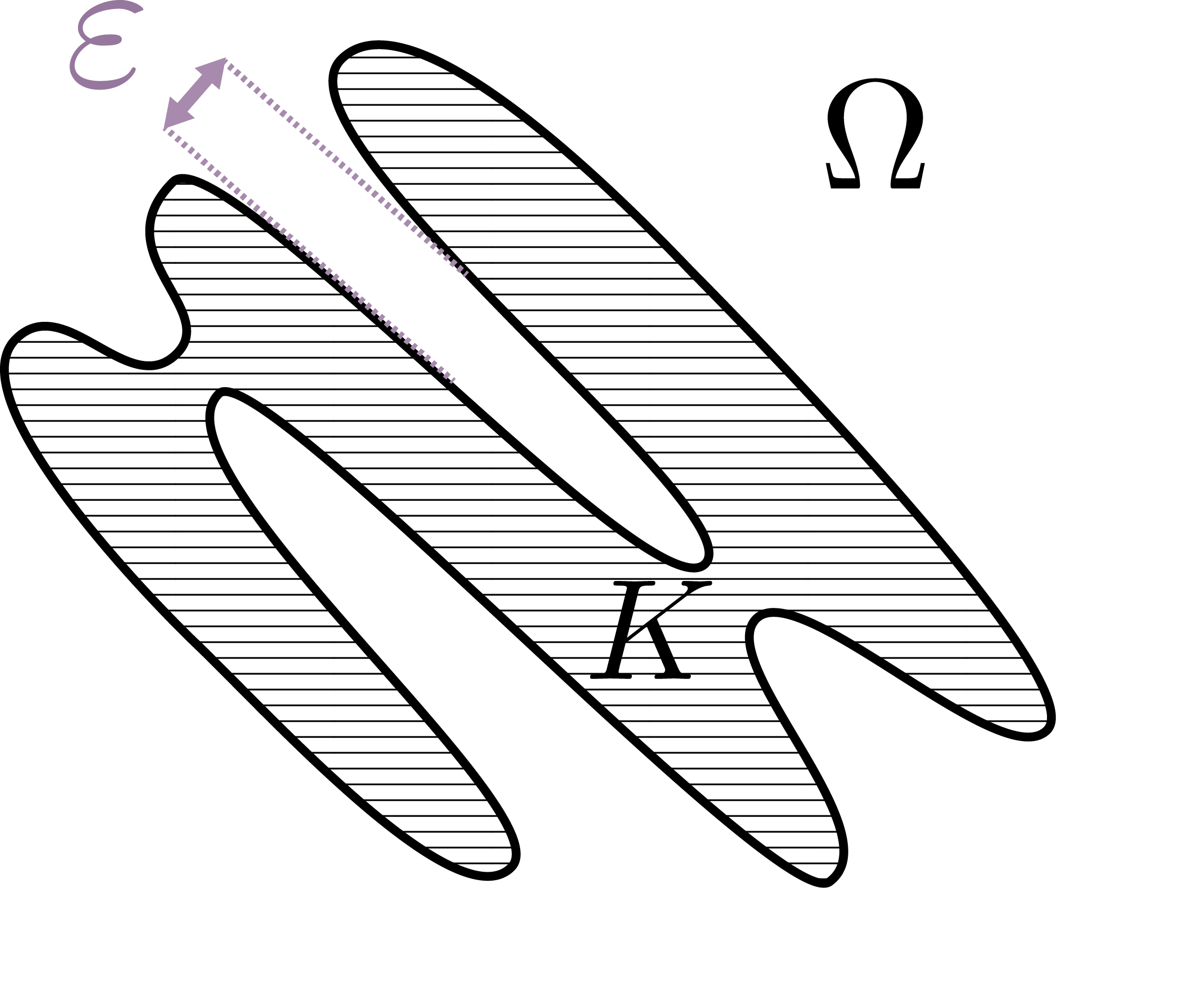}
  \caption{Example of a domain $\Omega=\R^n\setminus K$, exterior of an axially convex obstacle, which does not admit any pattern satisfying~\eqref{Limit1}, but admits stable patterns (for some $f$). \label{ANG_IntG_fig:Obstacle_Convex2}}
\end{figure}

\paragraph*{Further references.}

Many authors have also been interested in understanding how the existence of stable patterns could emerge from the non-homogeneity of coefficients, for example, under non-homogeneous diffusivity~\cite{Yanagida1982,DoNascimento2014a,Sonego2017} or non-homogeneous reaction~\cite{Alikakos988,Brown1990,Sonego2016}. 

Finally, we mention that the regularity of weak stable solutions has been widely studied,  see~\cite{Dupaigne2011,Cabreb}, and references therein. See also~\cite{Chanillo1998a} for properties on the level sets of stable patterns in the case of Dirichlet boundary conditions.

\section{Non-existence of patterns}\label{sec:Proofs_Non-existence}
\subsection{Proof of \autoref{ANG_Intro_th:GammaStability}}\label{sec:Proof_1}

We give a proof which is in the spirit of~\cite{Casten1978a,Matano1979a}; we propose an alternative proof in Section~\ref{sec:Proof_2}.
We begin with a classical geometrical lemma which was first stated in \cite{Casten1978a,Matano1979a} (in a slightly less general form).
\begin{lemma}[\cite{Casten1978a,Matano1979a}] \label{LemmeIntermediaireVariations}
Let $\Omega\subset\R^n$ be a smooth domain and recall $\gamma(\cdot)$ from~\autoref{Def:CurvatureGamma}. Let $u$ be a $C^2$ function such that
\begin{equation}\label{NeumannVariations}
\D_\nu u=0\quad \text{on }\D\Omega.
\end{equation}
Then,
\begin{equation}
\frac{1}{2}\D_\nu\vert\nabla u\vert^2=-\nabla u\cdot\nabla\nu\cdot\nabla u\leq - \gamma \vert\nabla u\vert^2\quad\text{on }\D\Omega,
\end{equation}
with equality if $n=2$.
\end{lemma}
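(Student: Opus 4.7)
The plan is to compute the normal derivative of $|\nabla u|^2$ directly using the Neumann condition, then identify the resulting quadratic form with the second fundamental form of $\partial\Omega$.

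First, I would note that the Neumann condition $\partial_\nu u=0$ on $\partial\Omega$ forces $\nabla u$ to be tangent to $\partial\Omega$. Extending the outward normal $\nu$ smoothly to a neighborhood of $\partial\Omega$ (say, as the gradient of a signed distance function), and writing indices, direct differentiation gives
\begin{equation}
\tfrac{1}{2}\partial_\nu |\nabla u|^2 \;=\; \sum_{i,j}\nu_i\,u_{ij}\,u_j \qquad \text{on }\partial\Omega.
\end{equation}
On the other hand, because $\nabla u$ is tangent to $\partial\Omega$, I may differentiate the scalar identity $\sum_i \nu_i u_i \equiv 0$ (valid on $\partial\Omega$) in the direction $\nabla u$, obtaining
\begin{equation}
0 \;=\; \sum_{i,j} u_j\,(\partial_j \nu_i)\,u_i \;+\; \sum_{i,j} u_j\,\nu_i\,u_{ij}.
\end{equation}
Combining these two displays yields the announced identity
\begin{equation}
\tfrac{1}{2}\partial_\nu |\nabla u|^2 \;=\; -\sum_{i,j} u_i\,(\partial_j \nu_i)\,u_j \;=\; -\,\nabla u\cdot \nabla\nu\cdot \nabla u.
\end{equation}

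Next, I would identify $\nabla u\cdot\nabla\nu\cdot\nabla u$ with the second fundamental form. The restriction of $\nabla\nu$ to the tangent bundle of $\partial\Omega$ is the Weingarten map, whose associated quadratic form is the second fundamental form; with the sign convention adopted in \autoref{Def:CurvatureGamma} its eigenvalues are the principal curvatures, so that $\gamma$ is the smallest eigenvalue (and $\gamma\ge 0$ exactly when $\Omega$ is convex). Since $\nabla u$ is tangent to $\partial\Omega$, the variational characterization of the lowest eigenvalue of a symmetric form gives
\begin{equation}
\nabla u\cdot \nabla\nu\cdot \nabla u \;\ge\; \gamma\,|\nabla u|^2 \qquad \text{on }\partial\Omega,
\end{equation}
which, combined with the previous identity, yields the desired inequality. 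When $n=2$, the tangent space to $\partial\Omega$ is one-dimensional, so the quadratic form reduces to $\gamma\,|\nabla u|^2$ and the inequality becomes an equality.

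The only point requiring genuine care is the sign convention: one must verify that with the outward normal and the convention chosen in \autoref{Def:CurvatureGamma}, the Weingarten map $X\mapsto \nabla_X \nu$ indeed has nonnegative eigenvalues on convex domains (sanity check: for the unit ball, $\nu(x)=x$, so $\nabla\nu=\mathrm{Id}$ on tangent vectors, giving $\gamma=1$ and equality in the lemma). Once this is in place the proof reduces to the two-line computation above, and no delicate analysis is needed.
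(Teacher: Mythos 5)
Your proposal is correct and follows essentially the same route as the paper's proof: differentiating the Neumann identity $\nu\cdot\nabla u=0$ tangentially along $\nabla u$ to express $\tfrac12\partial_\nu|\nabla u|^2$ as $-\nabla u\cdot\nabla\nu\cdot\nabla u$, then applying the Rayleigh-quotient characterization of $\gamma$ as the smallest eigenvalue of the shape operator restricted to the tangent space. The extra remarks (extension of $\nu$ via a signed distance, the unit-ball sanity check) are sound but not substantively different from the argument in the paper.
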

\begin{proof}
On the one hand, differentiating~\eqref{NeumannVariations} with respect to the vector field $\nabla u$ (note that $\nabla u$ is tangential to $\D\Omega$) leads to
\begin{equation}\label{Lemma_Geom_Proof_Intermediate}
0=\nabla\D_\nu u\cdot\nabla u=\nabla u\cdot\nabla^2 u\cdot\nu+\nabla u\cdot\nabla\nu\cdot\nabla u.
\end{equation}
On the other hand, we have 
\begin{equation}
\D_\nu\vert\nabla u\vert^2=\nabla\left(\vert\nabla u\vert^2\right)\cdot\nu=2\nabla u\cdot\nabla^2 u\cdot\nu.
\end{equation}
Using~\eqref{Lemma_Geom_Proof_Intermediate} in the above expression, we deduce
\begin{equation}\label{LemmeIntermediaireEgaliteIntermediaire}
\D_\nu\vert\nabla u\vert^2=-2\nabla u\cdot\nabla\nu\cdot\nabla u.
\end{equation}

We recall that ${\gamma}(x)$ is defined as the lowest eigenvalue of $\nu(x)$ restricted to the tangent space $\nu(x)^\perp$, i.e., by the Rayleigh quotient formula,
\begin{equation}
{\gamma}(x)=\inf\limits_{X\in\nu(x)^\perp\setminus\{0\}}\frac{X\cdot\nabla\nu(x)\cdot X}{\vert X\vert^2}, \qquad\forall x\in\D\Omega.
\end{equation}
Therefore, $\nabla u\cdot\nabla\nu\cdot\nabla u\geq \gamma\vert\nabla u\vert^2$, with equality if $n=2$. From this and~\eqref{LemmeIntermediaireEgaliteIntermediaire}, we conclude the proof.
\end{proof}

Now, we give the following intermediate result.
\begin{lemma}\label{Lemma_InegaliteSommeF}
Let $u$ be a solution of~\eqref{ANG_Intro_EquationSemilineaire}, and set $v_i:=\D_{x_i} u$, for all $i\in\{1,\dots,n\}$. We have that
\begin{equation}\label{InegaliteSommeF}
\sum\limits_{i=1}^n \mathcal{F}_\gamma(v_i)\leq 0,
\end{equation}
with $\mathcal{F}_\gamma$ from~\autoref{Definition_Lambda_Gamma}.
\end{lemma}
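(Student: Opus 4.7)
The plan is to test the quadratic form $\mathcal{F}_\gamma$ on each partial derivative $v_i = \partial_{x_i} u$ and sum, reducing everything to a boundary identity that is controlled by the geometric inequality in \autoref{LemmeIntermediaireVariations}.

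First I would differentiate the PDE: since $-\Delta u = f(u)$ in $\Omega$, each $v_i$ satisfies the linearized equation $-\Delta v_i = f'(u) v_i$. Multiplying by $v_i$ and integrating by parts on $\Omega$ gives
\begin{equation}
\int_\Omega |\nabla v_i|^2 - \int_\Omega f'(u) v_i^2 = \int_{\partial\Omega} v_i\, \partial_\nu v_i,
\end{equation}
so that
\begin{equation}
\mathcal{F}_\gamma(v_i) = \int_{\partial\Omega} v_i\, \partial_\nu v_i + \int_{\partial\Omega} \gamma v_i^2.
\end{equation}

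Next I would sum over $i$. The key observation is that $\sum_i v_i \partial_\nu v_i = \sum_{i,j} \nu_j \partial_i u \, \partial_j \partial_i u = \tfrac{1}{2} \partial_\nu |\nabla u|^2$, and of course $\sum_i v_i^2 = |\nabla u|^2$. Therefore
\begin{equation}
\sum_{i=1}^n \mathcal{F}_\gamma(v_i) = \int_{\partial\Omega} \Bigl( \tfrac{1}{2} \partial_\nu |\nabla u|^2 + \gamma |\nabla u|^2 \Bigr).
\end{equation}

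The final step is to invoke \autoref{LemmeIntermediaireVariations}, which uses only the Neumann condition $\partial_\nu u = 0$ to conclude $\tfrac{1}{2}\partial_\nu |\nabla u|^2 \leq -\gamma |\nabla u|^2$ pointwise on $\partial\Omega$. Integrating this inequality over $\partial\Omega$ gives exactly $\sum_i \mathcal{F}_\gamma(v_i) \leq 0$.

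There is no real obstacle here: the proof is essentially an integration-by-parts bookkeeping on top of the geometric lemma, which has already been established. The only subtle point is the precise cancellation between the surface term $\tfrac{1}{2}\partial_\nu|\nabla u|^2$ produced by the linearized equation and the Robin-curvature boundary contribution $\gamma|\nabla u|^2$ built into $\mathcal{F}_\gamma$; this cancellation is exactly what motivated the definition of the Robin-curvature functional, and it explains why the convexity hypothesis of the Casten--Holland--Matano argument is absorbed into $\gamma$ here.
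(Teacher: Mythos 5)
Your argument is correct and coincides with the paper's own proof step for step: differentiate the equation, multiply by $v_i$, integrate by parts, sum over $i$, and bound the resulting boundary term using \autoref{LemmeIntermediaireVariations}. Nothing is missing and no alternative route is taken.
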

\begin{proof}
Differentiating \eqref{ANG_Intro_EquationSemilineaire} with respect to $x_i$, we find that $v_i:=\D_{x_i}u$ satisfies the linearized equation
\begin{equation}\label{linearization}
    -\Delta v_i - f'(u)v_i=0\quad\text{in }\Omega.
\end{equation}
Multiplying by $v_i$ and using the divergence theorem, we find
\begin{equation*}
\int_\Omega \vert \nabla v_i\vert ^2 - \int_\Omega f'(u)v_i^2=\int_{\D\Omega}v_i\D_\nu v_i,
\end{equation*}
and so, writing $v_i\D_\nu v_i= \frac{1}{2}\D_\nu v_i^2$, we obtain
\begin{equation*}
\mathcal{F}_\gamma(v_i)=\int_{\D\Omega}\frac{1}{2}\D_\nu v_i^2+\int_{\D\Omega}\gamma v_i^2.
\end{equation*}
Summing the above equation for $i\in\{1,\dots,n\}$ and using that $\vert \nabla u\vert^2=v_1^2+\dots+v_n^2$ gives
\begin{equation}
\sum\limits_{i=1}^n \mathcal{F}_\gamma(v_i)=\int_{\D\Omega}\frac{1}{2}\D_\nu \vert\nabla u\vert^2+\int_{\D\Omega}\gamma \vert\nabla u\vert^2.
\end{equation}
The right-hand side of the above identity is nonpositive from \autoref{LemmeIntermediaireVariations}, which achieves the proof.
\end{proof}

With these lemmas in hands, we are ready to prove~\autoref{ANG_Intro_th:GammaStability}.
Let $u$ be a solution of \eqref{ANG_Intro_EquationSemilineaire} and assume $\lambda_\gamma\geq0$. Set $v_i:=\D_{x_i} u$, for all $i\in\{1,\dots,n\}$.
Since $\lambda_\gamma\geq0$, we have that $\mathcal{F}_\gamma(\cdot)\geq0$, hence, using~\autoref{Lemma_InegaliteSommeF},
\begin{equation}
0\leq \mathcal{F}_\gamma(v_i)\leq\sum\limits_{k=1}^n \mathcal{F}_\gamma(v_k)=\int_{\D\Omega}\frac{1}{2}\D_\nu \vert\nabla u\vert^2+\int_{\D\Omega}\gamma \vert\nabla u\vert^2\leq 0.
\end{equation}
We infer that $\mathcal{F}_\gamma(v_i)=0$ and that $v_i$ minimizes $\mathcal{F}_\gamma$.

It is then classical that $v_i$ must have a constant strict sign on $\overline\Omega$ (more precisely, $v_i$ must be a constant multiple of the eigenfunction associated with $\lambda_\gamma$~\cite{Daners2013}).
Then, from $\D_\nu u=0$ on the closed surface $\D\Omega$, we deduce that $v_i$ vanishes on some point of the boundary. Since $v_i$ has a constant strict sign, we deduce that $v_i\equiv0$, which completes the proof.

\subsection{Alternative proof of \autoref{ANG_Intro_th:GammaStability}}\label{sec:Proof_2}
The proof of \autoref{ANG_Intro_th:GammaStability} presented in Section~\ref{sec:Proof_1} relies on \autoref{Lemma_InegaliteSommeF} which involves the term $\sum_{i=1}^n \mathcal{F}_\gamma(v_i)$. However, it is not easy to give an intuitive interpretation of this quantity, and although the method is simple, it seems ad hoc. 
Let us give an alternative proof where the above quantity appears naturally. The idea is inspired by the famous geometric Poincaré inequality introduced by Sternberg and Zunbrum~\cite{Sternberg1998a,Sternberg1998} and consists of using $v=\vert \nabla u\vert$ as a test function in $\mathcal{F}_\gamma$.

In fact, we are going to prove a more precise result than \autoref{ANG_Intro_th:GammaStability}.
\begin{proposition}\label{th:FlatnessEstimate_Poincare}
Let $u$ be a nonconstant solution of~\eqref{ANG_Intro_EquationSemilineaire}. Then,
\begin{equation}\label{SommeFGeometricPoincare_Statement}
\lambda_\gamma \leq  \frac{\big\Vert\nabla \vert \nabla u\vert \big\Vert^2-\Vert \nabla^2 u\Vert^2}{ \Vert \nabla u\Vert^2}< 0,
\end{equation}
where $\Vert\cdot\Vert$ denotes the usual $L^2$-norm and $\nabla^2 u$ is the Hessian matrix of $u$.
\end{proposition}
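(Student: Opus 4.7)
My plan is to test the Rayleigh quotient defining $\lambda_\gamma$ with $\psi=|\nabla u|$. Since $u\in C^2(\overline{\Omega})$, the function $|\nabla u|$ is Lipschitz on $\overline{\Omega}$, hence in $H^1(\Omega)$, so it is admissible. The variational characterization in \autoref{Definition_Lambda_Gamma} gives
\[
\lambda_\gamma\,\Vert\nabla u\Vert^2\leq \mathcal{F}_\gamma(|\nabla u|),
\]
and the task reduces to showing $\mathcal{F}_\gamma(|\nabla u|)\leq \Vert\nabla|\nabla u|\Vert^2-\Vert\nabla^2 u\Vert^2$ (for the first inequality) and that this upper bound is strictly negative (for the second).

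To bound $\mathcal{F}_\gamma(|\nabla u|)$, I would rewrite the potential term via the Bochner identity
\[
\tfrac{1}{2}\Delta|\nabla u|^2=\Vert\nabla^2 u\Vert^2+\nabla u\cdot\nabla\Delta u.
\]
Differentiating the PDE gives $\nabla\Delta u=-f'(u)\nabla u$, so integrating and using the divergence theorem yields
\[
\int_\Omega f'(u)|\nabla u|^2=\int_\Omega\Vert\nabla^2 u\Vert^2-\int_{\D\Omega}\tfrac{1}{2}\D_\nu|\nabla u|^2.
\]
Substituting into the definition of $\mathcal{F}_\gamma(|\nabla u|)$ produces
\[
\mathcal{F}_\gamma(|\nabla u|)=\Vert\nabla|\nabla u|\Vert^2-\Vert\nabla^2 u\Vert^2+\int_{\D\Omega}\Bigl(\tfrac{1}{2}\D_\nu|\nabla u|^2+\gamma|\nabla u|^2\Bigr),
\]
and the boundary bracket is nonpositive by \autoref{LemmeIntermediaireVariations}. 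Combined with the Rayleigh inequality above, this yields the first (non-strict) inequality.

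For the strict negativity of the numerator $\Vert\nabla|\nabla u|\Vert^2-\Vert\nabla^2 u\Vert^2$, the pointwise Cauchy--Schwarz/Kato estimate $|\nabla|\nabla u||^2\leq\Vert\nabla^2 u\Vert^2$, obtained by squaring $\D_j|\nabla u|=|\nabla u|^{-1}\sum_i\D_i u\,\D_{ij}u$ and applying Cauchy--Schwarz, already delivers the non-strict bound. If equality held in the integrated version, then on the open set $\{|\nabla u|>0\}$ one would necessarily have $\nabla^2 u=\beta(x)\,\nabla u\otimes\nabla u$, while $\nabla^2 u$ vanishes almost everywhere on the complement. A direct calculation then shows $\nabla\bigl(\nabla u/|\nabla u|\bigr)\equiv 0$ on that set, so the level sets of $u$ are parallel hyperplanes in each connected component and $u$ is locally of the form $g(x\cdot e)$. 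The Neumann condition $g'(x\cdot e)(e\cdot\nu(x))=0$ on the closed hypersurface $\D\Omega$, combined with the fact that $e\cdot\nu$ must take both signs on any closed bounded boundary, would force $g$ to be constant on the range of $x\cdot e$ over $\overline{\Omega}$, contradicting the nonconstancy of $u$.

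I expect the main obstacle to be this rigidity step: the first inequality is essentially a combination of Bochner's identity with the pre-existing \autoref{LemmeIntermediaireVariations}, whereas the strict inequality requires a Kato-type rigidity argument ruling out the degenerate planar-level-set configuration.
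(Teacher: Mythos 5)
Your proof is correct and follows essentially the same route as the paper: both test the Rayleigh quotient with $\psi=|\nabla u|$, use the differentiated equation (your Bochner identity for $|\nabla u|^2$ is the same computation as the paper's elliptic inequality $-\Delta|\nabla u|-f'(u)|\nabla u|\leq 0$), and invoke \autoref{LemmeIntermediaireVariations} to control the boundary term. For the strict negativity, the paper appeals to the Sternberg--Zunbrum identity, which says precisely that $\|\nabla|\nabla u|\|^2-\|\nabla^2 u\|^2$ equals minus an integral of curvature terms vanishing iff $u$ is one-dimensional; your pointwise Kato equality-case analysis ($\nabla^2 u=\beta\,\nabla u\otimes\nabla u$ implying $\nabla(\nabla u/|\nabla u|)=0$) is an equivalent algebraic formulation. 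One small caveat: your final step, ruling out the planar profile $u=g(x\cdot e)$ on a bounded domain, is stated a bit loosely (``$e\cdot\nu$ takes both signs'' by itself does not force $g'\equiv 0$); a cleaner route is to observe that near the points where $x\cdot e$ attains its extrema over $\overline\Omega$, the quantity $e\cdot\nu$ is bounded away from $0$ on the corresponding cap of $\partial\Omega$, so $g'$ vanishes on a nondegenerate subinterval, and Cauchy--Lipschitz uniqueness for $-g''=f(g)$ then propagates $g'\equiv 0$. This matches the paper's own ``it is then easy to deduce that $u$ is constant,'' so the gap is the same in both.
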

\begin{proof}
Set $v_i=\D_{x_i} u$ for all $i=1,\dots, n$ and $v=\vert \nabla u\vert =\sqrt{v_1^2+\dots+v_n^2}$.
A straightforward computation gives (on the set $\{v\neq0\}$)
\begin{equation}
-\Delta v - f'(u) v= \frac{1}{v}\left(\vert\nabla v\vert^2-\sum_{i=0}^n\vert \nabla v_i\vert^2\right)\leq0.
\end{equation}
Multiplying by $v$ and using the divergence theorem, we find
\begin{equation}
-\int_{\D\Omega} v\D_\nu v+ \int_\Omega \vert\nabla v\vert^2- \int_\Omega  f'(u)v^2=\int_\Omega \bigg[\vert\nabla v\vert^2-\sum_{i=0}^n\vert \nabla v_i\vert^2\bigg].
\end{equation}
Writing $v\D_\nu v=\frac{1}{2}\D_\nu v^2$, and using \autoref{LemmeIntermediaireVariations}, we obtain
\begin{equation}\label{SommeFGeometricPoincare}
\mathcal{F}_\gamma(v)\leq \int_\Omega \bigg[ \vert\nabla v\vert^2-\sum_{i=0}^n\vert \nabla v_i\vert^2\bigg].
\end{equation}
From the definition of $\lambda_\gamma$ (\autoref{Definition_Lambda_Gamma}), we have that $\mathcal{F}_\gamma(v)\geq \lambda_\gamma\Vert v\Vert^2$.
We conclude the proof by using this inequality and that $\int_\Omega\sum_{i=0}^n\vert \nabla v_i\vert^2=\Vert\nabla^2 u\Vert$ in~\eqref{SommeFGeometricPoincare}.
\end{proof}

We emphasize that~\eqref{SommeFGeometricPoincare} is simply a rewriting of~\eqref{InegaliteSommeF} (we recall that $v^2= v_1^2+\dots+ v_n^2$). The advantage of this formulation is that we can give a a geometric interpretation of the right member: as stated in~\cite[Lemma 2.1]{Sternberg1998},
\begin{equation}
\vert\nabla v\vert^2-\sum_{i=0}^n\vert \nabla v_i\vert^2=
\left\{\begin{aligned}
&\vert \nabla u\vert^2\sum_{i=1}^{n-1}\kappa_i^2+\big\vert\nabla_T\vert \nabla u\vert\big\vert^2 &&\text{on }\{\nabla u\neq 0\},\\
&0 &&\text{a.e. on }\{\nabla u=0\},
\end{aligned}\right.
\end{equation}
where $\kappa_i$ are the principal curvature of the level set of $u$ and $\nabla_T$ denotes the differentiation which is tangent to this level set. 

In particular, this term is nonpositive and is identically zero if and only if $u$ is one-dimensional. Thus, inequality~\eqref{SommeFGeometricPoincare_Statement} can be seen as a flatness estimate.

If $\lambda_\gamma\geq0$, we deduce from~\eqref{SommeFGeometricPoincare} and the above geometric interpretation that $u$ is one-dimmensional. Then, we easily conlcude that $u$ is constant: it completes the proof of \autoref{ANG_Intro_th:GammaStability}.

For $u$ a pattern of~\eqref{ANG_Intro_EquationSemilineaire}, inequality~\eqref{SommeFGeometricPoincare_Statement} provides a negative upper bound on $\lambda_\gamma$.
\autoref{th:FlatnessEstimate_Poincare} is therefore more precise than~\autoref{ANG_Intro_th:GammaStability} which only implies that $\lambda_\gamma<0$.

\subsection{Proof of~\autoref{th:CoroPerturbation}}

We begin with an intermediate result stating that the strict inequality holds in~\eqref{MonotonieLambdaGammaPositif}.
\begin{lemma}\label{th:MonotonieStricteLambdaGammaPositif}
Assume that $\Omega$ is bounded and convex, and let $u$ be a solution of~\eqref{ANG_Intro_EquationSemilineaire}. Then,
\begin{equation}\label{MonotonieStricteLambdaGammaPositif}
\lambda_\gamma> \lambda_0
\end{equation}
with $\lambda_0$ $\lambda_\gamma$ from \autoref{def:stability} and \autoref{Definition_Lambda_Gamma}.
\end{lemma}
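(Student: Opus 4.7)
The plan is to prove $\lambda_\gamma > \lambda_0$ by contradiction, building on the variational characterizations of the two eigenvalues. The weak inequality $\lambda_\gamma \geq \lambda_0$ is immediate from convexity of $\Omega$: since $\gamma\geq 0$ on $\partial\Omega$, one has $\mathcal F_\gamma(\psi)\geq \mathcal F_0(\psi)$ pointwise on the common admissible set $\{\psi\in H^1(\Omega):\|\psi\|_{L^2}=1\}$, so the claim on the infima follows. The whole content of the lemma is therefore the strict inequality.

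To this end, assume for contradiction that $\lambda_\gamma=\lambda_0$ and let $\psi_\gamma\in H^1(\Omega)$, normalized by $\|\psi_\gamma\|_{L^2}=1$, be a principal eigenfunction attaining $\lambda_\gamma$. Its existence, together with the fact that it may be chosen strictly positive on $\overline\Omega$, follows from the theory of the (here definite, since $\gamma\geq 0$) Robin Laplacian; this is Krein--Rutman together with Hopf's lemma, and is recorded in~\cite{Daners2013}. Testing the variational principle defining $\lambda_0$ against $\psi_\gamma$ gives
\begin{equation*}
\lambda_0 \;\leq\; \mathcal{F}_0(\psi_\gamma) \;=\; \mathcal{F}_\gamma(\psi_\gamma) - \int_{\partial\Omega}\gamma\,\psi_\gamma^2 \;=\; \lambda_\gamma - \int_{\partial\Omega}\gamma\,\psi_\gamma^2,
\end{equation*}
so the assumed equality forces $\int_{\partial\Omega}\gamma\,\psi_\gamma^2 \leq 0$. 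Combined with $\gamma\geq 0$ on $\partial\Omega$ and $\psi_\gamma>0$ on $\overline\Omega$, this yields $\gamma\equiv 0$ on all of $\partial\Omega$.

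The \emph{main obstacle} is to rule out this last possibility, which is the purely geometric statement that the lowest principal curvature of the boundary of a bounded $C^2$ convex domain cannot vanish identically. My plan is an extremal-point comparison: choose $x_0\in\partial\Omega$ maximizing $x\mapsto |x|$ over $\overline\Omega$. Then $\overline\Omega\subset\overline{B(0,|x_0|)}$ and the two boundaries are internally tangent at $x_0$, so a standard comparison of signed distance functions (equivalently, of the respective second fundamental forms at the tangency point) gives $\gamma(x_0)\geq 1/|x_0|>0$, contradicting $\gamma\equiv 0$ and closing the proof. In dimension $n=2$ one may alternatively invoke the Gauss--Bonnet identity $\int_{\partial\Omega}\gamma=2\pi$ quoted just after \autoref{th:CoroInstablePRELIMINARY} to reach the same contradiction directly.
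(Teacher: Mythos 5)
Your proof is correct and uses essentially the same ingredients as the paper: the strictly positive principal eigenfunction of the Robin-curvature Laplacian, and the extremal-point comparison showing $\gamma>0$ at a farthest boundary point. The paper writes it as a direct chain $\lambda_\gamma=\mathcal{F}_\gamma(\varphi)=\mathcal{F}_0(\varphi)+\int_{\partial\Omega}\gamma\varphi^2>\mathcal{F}_0(\varphi)\geq\lambda_0$ rather than a contradiction, but this is a purely cosmetic difference.
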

\begin{proof}
First, let us prove that there exists a point $x_0\in\D\Omega$ at which $\gamma(x_0)>0$. Since $\Omega$ is bounded, the norm mapping $x\in\Omega\mapsto \vert x\vert$ reaches its maximum $R>0$ at some point $x_0\in\D\Omega$. Denoting by $B_{R}$ the ball of radius $R$ centered at the origin, we have that $\Omega\subset B_{R}$ and that $\Omega$ and $B_R$ are tangent at $x_0$. From a standard comparison principle, we deduce that $\gamma(x_0)\geq\frac{1}{R_0}>0$.

Then, consider $\varphi$ the (unique) positive eigenfunction associated with $\lambda_\gamma$ (given, for example, by~\cite{Daners2013}). It is classical that $\varphi>0$ in $\overline{\Omega}$. From this and the fact that $\gamma(x_0)>0$, we infer that $\int_{\D\Omega}\gamma\varphi^2>0$. We deduce
\begin{equation}
\lambda_\gamma=\mathcal{F}_\gamma(\varphi)= \mathcal{F}(\varphi)+\int_{\D\Omega}\gamma\varphi^2>\mathcal{F}_0(\varphi)\geq \lambda_0,
\end{equation}
which concludes the proof.
\end{proof}

Now, let us prove~\autoref{th:CoroPerturbation}.
Assume that $\Omega$ is convex and let $(\Omega_\eps,f_\eps)$, $\eps>0$ be a family of smooth perturbations which converges to $(\Omega,f)$ as $\eps\to0$. By contradiction, assume that there exists $u_\eps$ a sequence of stable patterns of~\eqref{ANG_Intro_EquationSemilineaire} with $(\Omega_\eps,f_\eps)$. We denote $\lambda_{0,\eps}$ and $\lambda_{\gamma,\eps}$ the quantities from \eqref{DefLambda} and \eqref{ANG_Intro_DefLambdaGamma} corresponding to $u_\eps$.
Since $u_\eps$ is a stable pattern, we have $\lambda_{0,\eps}\geq0$ (by the definition of stability) and $\lambda_{\gamma,\eps}<0$ (from \autoref{ANG_Intro_th:GammaStability}), which implies
\begin{equation}\label{IntermediateInequalityEigen}
\lambda_{\gamma,\eps}<\lambda_{0,\eps}.
\end{equation}

From classical elliptic estimates, $u_\eps$ is bounded in $C^{2,\alpha}(\overline\Omega_\eps),$ uniformly in $\eps>0$ (see e.g.~\cite[Theorem 6.30]{GilbarDavid2015}). Up to extraction of a subsequence (still denoted $\eps$), $u_\eps$ converges when $\eps\to0$ to some $u$ which is a solution of~\eqref{ANG_Intro_EquationSemilineaire}. Identically, $\lambda_{0,\eps}$ and $\lambda_{\gamma,\eps}$ converge to some $\lambda_0$, $\lambda_\gamma$ respectively.

Since $\lambda_{0,\eps}$ and $\lambda_{\gamma,\eps}$ behave continuously as $\eps\to0$, inequality~\eqref{IntermediateInequalityEigen} implies $\lambda_\gamma\leq0\leq \lambda_0$: we get a contradiction with~\autoref{th:MonotonieStricteLambdaGammaPositif}.

%
%

\subsection{Proof of \autoref{th:CoroInstablePRELIMINARY}}\label{sec:CriteriaNonExistence}

The proof of \autoref{th:CoroInstablePRELIMINARY} follows directly from the following lemma which breaks down the quantity $\lambda_\gamma$ into three parts: the stability of the solution, the geometry of the domain (through $\mu_\gamma$ from \autoref{Def:CurvatureGamma}), and the magnitude of the nonlinearity.
\begin{lemma}\label{th:Lemma_lambdaGammaInequalityBreakDown}
Let $u$ be a solution of~\eqref{ANG_Intro_EquationSemilineaire} and let $a\geq 1$. Then,
\begin{equation}
a \lambda_\gamma \geq (a-1)\lambda_0+\mu_{a\gamma}- \sup f'.
\end{equation}
\end{lemma}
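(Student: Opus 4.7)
The plan is to derive the inequality from a direct algebraic identity among the three quadratic forms $\mathcal{F}_\gamma$, $\mathcal{F}_0$, and $\mathcal{G}_{a\gamma}$ tested on a well-chosen function, and then to apply the Rayleigh characterizations of $\lambda_0$, $\lambda_\gamma$, and $\mu_{a\gamma}$. The guiding observation is that the three forms share the same Dirichlet term $\int_\Omega|\nabla\psi|^2$, while the boundary curvature term $\int_{\partial\Omega}\gamma\psi^2$ appears only in $\mathcal{F}_\gamma$ and $\mathcal{G}_{a\gamma}$ and the potential term $-\int_\Omega f'(u)\psi^2$ appears only in $\mathcal{F}_\gamma$ and $\mathcal{F}_0$, so a specific linear combination of these functionals cancels all geometric contributions cleanly and leaves only a remainder involving $f'(u)$.

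Concretely, I would pick $\psi$ to be a normalized principal eigenfunction of the Robin-curvature linearized operator, so that $\mathcal{F}_\gamma(\psi)=\lambda_\gamma$ and $\|\psi\|_{L^2(\Omega)}=1$ (any admissible competitor would actually do, but choosing the minimizer makes the left-hand side exactly $a\lambda_\gamma$). A term-by-term expansion then yields the identity
$$a\,\mathcal{F}_\gamma(\psi)\;=\;(a-1)\,\mathcal{F}_0(\psi)\;+\;\mathcal{G}_{a\gamma}(\psi)\;-\;\int_\Omega f'(u)\,\psi^2,$$
which one verifies by matching the coefficients of $\int_\Omega|\nabla\psi|^2$ (namely $a$ on each side), of $\int_{\partial\Omega}\gamma\,\psi^2$ (namely $a$ on each side), and of $\int_\Omega f'(u)\,\psi^2$ (namely $-a$ on each side, since $-(a-1)-1=-a$). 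Since $\|\psi\|_{L^2}=1$ and $a-1\ge 0$, the variational characterizations of $\lambda_0$ and $\mu_{a\gamma}$ give $(a-1)\mathcal{F}_0(\psi)\ge (a-1)\lambda_0$ and $\mathcal{G}_{a\gamma}(\psi)\ge \mu_{a\gamma}$, while pointwise domination gives $\int_\Omega f'(u)\,\psi^2\le \sup f'$. Plugging these three inequalities into the identity produces exactly $a\lambda_\gamma\ge (a-1)\lambda_0+\mu_{a\gamma}-\sup f'$.

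The only subtlety I anticipate is the hypothesis $a\ge 1$, which is precisely what makes $a-1$ nonnegative and hence preserves the direction of the inequality when passing from $\mathcal{F}_0(\psi)$ to its infimum $\lambda_0$; the argument would otherwise go the wrong way. Beyond this sign bookkeeping, the proof is a one-line test-function computation. No geometric input (such as \autoref{LemmeIntermediaireVariations}) is needed here, because the boundary curvature has already been absorbed into the definitions of $\mathcal{F}_\gamma$ and $\mathcal{G}_{a\gamma}$; the geometry enters only implicitly through $\mu_{a\gamma}$.
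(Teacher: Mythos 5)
Your proposal is correct and matches the paper's proof essentially line for line: the paper writes $|\nabla\psi|^2 = b|\nabla\psi|^2 + (1-b)|\nabla\psi|^2$ with $b\in(0,1)$ and then substitutes $a=\frac{1}{1-b}$, which is exactly your identity $a\,\mathcal{F}_\gamma(\psi)=(a-1)\mathcal{F}_0(\psi)+\mathcal{G}_{a\gamma}(\psi)-\int_\Omega f'(u)\psi^2$, followed by the same three Rayleigh and pointwise bounds. The only cosmetic difference is that the paper proves the lower bound for an arbitrary normalized $\psi$ and takes the infimum, rather than fixing a minimizer; as you note this makes no difference (and avoids invoking attainment of the infimum, though that is available here anyway).
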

If $u$ is not constant, then \autoref{ANG_Intro_th:GammaStability} implies that $\lambda_\gamma<0$. Then, the proof of \autoref{th:CoroInstablePRELIMINARY} is directly deduced from the above lemma.
\begin{proof}
Fix $b \in(0,1)$ and $\psi\in H^1(\Omega)$. 
By writing $\vert\nabla \psi\vert^2= b \vert\nabla\psi\vert^2+(1- b )\vert\nabla \psi\vert^2$, a straightforward computation gives
\begin{align}
&\mathcal{F}_\gamma(\psi):=\int_\Omega\vert\nabla \psi\vert^2 - \int_\Omega f'(u)+\int_{\D\Omega}\gamma\psi^2\\
&= b \left( \int_\Omega\vert\nabla \psi\vert^2- \int_\Omega f'(u) \psi^2\right)+(1- b )\left(\int_\Omega\vert\nabla \psi\vert^2+\int_{\D\Omega}\frac{\gamma }{1-b}\psi^2-\int_\Omega f'(u)\psi^2\right)\\
&= b\mathcal{F}_0(\psi) + (1-b)\mathcal{G}_{\frac{1}{1-b}\gamma}(\psi)- (1-b)\int_\Omega f'(u)\psi^2,\label{Intermediaire_PourAutrePreuve}
\end{align}
with $\mathcal{F}_\gamma$ from \autoref{Definition_Lambda_Gamma}, $\mathcal{F}_0$ from \autoref{def:stability} and $\mathcal{G}_{\frac{1}{1-b}\gamma}$ from~\autoref{Def:CurvatureGamma}. 
By definition, $\mathcal{F}_0(\psi)\geq \lambda_0\int_\Omega \psi^2$ and $\mathcal{G}_{\frac{1}{1-b}\gamma}(\psi)\geq \mu_{\frac{1}{1-b}\gamma}\int_\Omega \psi^2$. 
Dividing by $(1-b)$ and setting $a=\frac{1}{1-b}$, we infer
\begin{equation}
a \mathcal{F}_\gamma(\psi)\geq \Big((a-1)\lambda_0+\mu_{a\gamma}- \sup f'\Big)\int_\Omega\psi^2.
\end{equation}
We conclude the proof by taking the infimum over all functions $\psi\in H^1$ such that $\int_\Omega\psi^2=1$.
\end{proof}


\section{Nonlinear Cacciopoli inequality}\label{sec:Cacciopoli}

\begin{proof}[Proof of \autoref{th:CoroStable}]
Let $u$ be a solution of~\eqref{EquationSemilineaire_Hetero}, and set $v_i:=\D_{x_i} u$, for all $i\in\{1,\dots,n\}$. 
Differentiating \eqref{EquationSemilineaire_Hetero} with respect to $x_i$, we find that $v_i$ satisfies
\begin{equation}\label{linearization_Hetero}
    -\Delta v_i - \D_u f(x,u)v_i- \D_{x_i} f(x,u)=0\quad\text{in }\Omega.
\end{equation}
Multiplying by $v_i$ and using the divergence theorem, we find
\begin{equation*}
-\int_{\D\Omega}v_i\D_\nu v_i+\int_\Omega \vert \nabla v_i\vert ^2= \int_\Omega \D_u f(x,u)v_i^2+\int_\Omega \D_{x_i} f(x,u)v_i.
\end{equation*}
Writing $v_i\D_\nu v_i= \frac{1}{2}\D_\nu v_i^2$, summing the above equation for $i\in\{1,\dots,n\}$ and using that $\vert \nabla u\vert^2=v_1^2+\dots+v_n^2$, we obtain
\begin{equation}\label{Inegalite_Intermediare_Preuve_Hetero}
-\frac{1}{2}\int_{\D\Omega}\D_\nu\left[\vert\nabla u\vert^2\right] +\sum\limits_{i=1}^n\int_\Omega \vert \nabla v_i\vert ^2 =\int_\Omega \D_u f(x,u)\vert\nabla u\vert^2+\int_\Omega \nabla_x f(x,u)\cdot \nabla u.
\end{equation}

Let us estimate separately the left and right hand-side of the above expression. To estimate the left hand-side, we use \autoref{LemmeIntermediaireVariations} to deduce that $\frac{1}{2}\D_\nu\left[\vert\nabla u\vert^2\right]\leq \gamma \vert\nabla u\vert^2=\gamma\sum_{i=1}^n v_i^2$. Then, recalling the definitions of $\mu_\gamma$ and $\mathcal{G}_\gamma$ from \autoref{Def:CurvatureGamma}, we have
\begin{equation}
-\frac{1}{2}\int_{\D\Omega}\D_\nu\left[\vert\nabla u\vert^2\right] +\sum\limits_{i=1}^n\int_\Omega \vert \nabla v_i\vert ^2\geq \sum_{i=1}^n \mathcal{G}_\gamma(v_i)\geq \mu_\gamma \sum_{i=1}^n \int_\Omega v_i^2= \mu_\gamma \int_\Omega\vert\nabla u\vert^2.
\end{equation}

Let us show that the above inequality is strict whenever $u$ is not constant. If there is equality, we have that $\mathcal{G}_\gamma(v_i)= \mu_\gamma \int_\Omega v_i^2$ for all $i\in\{1,\dots,\}$. Then, $v_i$ minimizes $\mathcal{G}_\gamma$ and therefore is a constant multiple of the principal eigenfunction associated to $\mu_\gamma$. In particular, $v_i$ has a constant strict sign on $\overline{\Omega}$. However, since $\D_\nu u=0$ on $\D\Omega$, we deduce that $v_i$ vanishes on some point of $\D\Omega$, hence $v_i\equiv0$, and so $u$ is constant. We have thus shown that
\begin{equation}\label{Inegalite_Intermediare_Preuve_Hetero_2}
-\frac{1}{2}\int_{\D\Omega}\D_\nu\left[\vert\nabla u\vert^2\right] +\sum\limits_{i=1}^n\int_\Omega \vert \nabla v_i\vert ^2
\geq \mu_\gamma \int_\Omega\vert\nabla u\vert^2,
\end{equation}
with equality if and only if $u$ is constant.

Now, we compute the right-hand side of~\eqref{Inegalite_Intermediare_Preuve_Hetero} using the divergence theorem and that $u$ is a solution of~\eqref{EquationSemilineaire_Hetero}:
\begin{align}
\int_\Omega \D_u f(x,u)\vert\nabla u\vert^2+\int_\Omega \nabla_x f(x,u)\cdot \nabla u
&=\int_\Omega \nabla\big[f(x,u(x))\big]\cdot \nabla u
\\
&=\int_{\D\Omega}f(x,u)\D_\nu udx -\int_\Omega f(x,u) \Delta udx
\\
&= \int_\Omega f(x,u)^2dx.
\end{align}
We complete the proof by plugging the above inequality and~\eqref{Inegalite_Intermediare_Preuve_Hetero_2} into~\eqref{Inegalite_Intermediare_Preuve_Hetero}.
\end{proof}

\begin{remark}\label{Cacciopoli_MorePrecise}
With the same method as in Section~\ref{sec:Proof_2}, we can use the geometric Poincaré inequality to prove the slightly more precise estimate: for $u$ a nonconstant solution of~\eqref{EquationSemilineaire_Hetero}, we have
\begin{equation}
\mu_\gamma \Vert \nabla u\Vert^2-\int_\Omega f(x,u)^2dx\leq \big\Vert\nabla \vert \nabla u\vert \big\Vert^2-\Vert \nabla^2 u\Vert^2<0,
\end{equation}
where $\Vert \cdot\Vert$ denotes the usual $L^2$ norm and $\nabla^2 u$ is the Hessian matrix of $u$.
\end{remark}

\section{Flatness estimates}\label{sec:flatnessEstimates}

\subsection{Proof of \autoref{th:FlatnessEstimateRobinCurvature}}
\textit{Proof of the first statement.}
Let $\varphi$ be the (unique up to renormalization) eigenfunction associated with $\lambda_\gamma$~\cite{Daners2013}. Denoting $\langle\cdot,\cdot\rangle$ the usual $L^2(\Omega)$ scalar product, the mapping $\xi\in\R^n\mapsto \langle \nabla u\cdot \xi,\varphi\rangle$ is a continuous linear form which vanishes on a hyperplane $H$. We consider $(e_2,\dots,e_n)$ an orthonormal basis of $H$, that we supplement with a unit vector $e_1\in H^\perp$ (so that $(e_1,e_2,\dots,e_n)$ is an orthonormal basis of $\R^n$).
Then, we set $v_i:= \nabla u \cdot e_i$.

By construction, $v_i$ is orthogonal to $\varphi$ for all $i\in\{2,\dots,n\}$, hence
\begin{equation}\label{Flatness_Inequality_3}
\mathcal{F}_{\gamma}(v_i)\geq
\left\{\begin{aligned}
&\lambda_{\gamma,2}\Vert v_i\Vert^2 &&\text{if }i\in\{2,\dots,n\},\\
&\lambda_{\gamma}\Vert v_1\Vert^2 &&\text{if }i=1.
\end{aligned}\right.
\end{equation}
From \autoref{Lemma_InegaliteSommeF} and that $\Vert \nabla u\Vert^2= \sum_{i=1}^n\Vert v_i\Vert^2$, we have
\begin{equation}
0\geq  -(\lambda_{\gamma,2}-\lambda_\gamma )\Vert v_1\Vert^2+ \lambda_{\gamma,2} \Vert \nabla u\Vert^2.
\end{equation}
We easily conclude from the above inequality.

\paragraph*{}

\textit{Proof of the second statement.}
As in the previous step, we can choose an orthonormal basis such that, for all $i\in\{2,\dots,n\}$, $v_i$ is orthogonal to the eigenfunction associated with $\lambda_{0}$.
We thus have, for all $a\geq 1$,
\begin{equation}\label{Flatness_Inequality_2}
\mathcal{F}_{0}(v_i)\geq
\left\{\begin{aligned}
&\lambda_{0,2}\Vert v_i\Vert^2 &&\text{if }i\in\{2,\dots,n\},\\
&\lambda_{0}\Vert v_1\Vert^2 &&\text{if }i=1,
\end{aligned}\right.,
\qquad\text{and}\qquad 
\mathcal{G}_{a\gamma}(v_i)\geq 
\mu_{a\gamma}\Vert v_i\Vert^2.
\end{equation}
From~\eqref{Intermediaire_PourAutrePreuve}, for all $b\in(0,1)$, we have 
\begin{equation}
0\geq b\sum\limits_{i=1}^n\mathcal{F}_0(v_i) + (1-b)\sum\limits_{i=1}^n\mathcal{G}_{\frac{1}{1-b}\gamma}(v_i)- (1-b)\int_\Omega f'(u)\vert\nabla u\vert^2.
\end{equation}
Using~\eqref{Flatness_Inequality_2}, we obtain
\begin{equation}
0\geq b\left(\lambda_0\Vert v_1\Vert^2+\lambda_{0,2}\sum\limits_{i=2}^n\Vert v_i\Vert^2\right)+ (1-b)\mu_{\frac{1}{1-b}\gamma}\sum\limits_{i=1}^n\Vert v_i\Vert^2- (1-b)\sup f'\Vert\nabla u\Vert^2.
\end{equation}
Since $\sum\limits_{i=1}^n\Vert v_i\Vert^2=\Vert\nabla u\Vert^2$, dividing by $(1-b)\Vert\nabla u\Vert^2$ and setting $a=\frac{1}{1-b}$ gives
\begin{equation}
0\geq (a-1)\left(\lambda_0\Vert v_1\Vert^2+\lambda_{0,2}\left(\Vert \nabla u\Vert^2-\Vert v_1\Vert^2\right)\right)+ \mu_{a\gamma}\Vert \nabla u\Vert^2- \sup f'\Vert\nabla u\Vert^2.
\end{equation}
Dividing by $\Vert\nabla u\Vert^2$ and rearranging the term gives the result.

\begin{remark}\label{Remark_Flatness_Robin}
An analogous $k$-dimensional estimate can be derived for pattern of Robin-curvature Morse index $k\in\{1,\dots, n\}$. More precisely, let $u$ be a pattern and define, for all positive integer $k$,
\begin{equation}\label{ANG_Intro_DefLambdaGamma_k}
\la_{\gamma,k}:=\sup\Big\{\inf\{\mathcal{F}_\gamma(\psi),\psi\in E,                                                                                                                                                                                                                                                                                                                                                                                                          \Vert \psi\Vert_{{L}^2}=1\}, E\subset H^1(\Omega), \dim E=k\big\}.
\end{equation}
Note that with these notations, we have $\lambda_\gamma=\lambda_{\gamma,1}$.
Then, there exists an orthonormal basis $(e_1,\dots,e_n)$ of $\R^n$ such that
\begin{equation}
\sum\limits_{i=1}^n\lambda_{\gamma,i}\Vert \nabla u\cdot e_i\Vert^2\leq 0.
\end{equation}
In particular, if $u$ is of \emph{Robin-curvature Morse index} $ k\in\{1,\dots,n\}$ (i.e. $\lambda_{\gamma,1}\leq\dots\leq \lambda_{\gamma,k-1}<0\leq \lambda_{\gamma,k}$), then
\begin{equation}
\frac{\sum\limits_{i=1}^k\Vert \nabla u\cdot e_i\Vert^2}{\Vert \nabla u\Vert^2}\leq \frac{\lambda_{\gamma,k}}{\lambda_{\gamma,k}-\lambda_{\gamma,1}},
\end{equation}
and $u$ tends to be $k$-dimensional if $\lambda_{\gamma,k}\gg1$.

Similarily, we can derive an estimate if $u$ is of Morse index $k\in\{1,\dots, n\}$. Setting, for all positive integer $k$,
\begin{equation}
\la_{0,k}:=\sup\Big\{\inf\{\mathcal{F}_0(\psi),\psi\in E,                                                                                                                                                                                                                                                                                                                                                                                                          \Vert \psi\Vert_{{L}^2}=1\}, E\subset H^1(\Omega), \dim E=k\big\},
\end{equation}
there exists an orthonormal basis $(e_1,\dots,e_n)$ such that 
\begin{equation}
(a-1)(\lambda_{0,k}-\lambda_0)\frac{\sum\limits_{i=1}^k\Vert \nabla u\cdot e_i\Vert^2}{\Vert \nabla u\Vert^2}\geq (a-1)\lambda_{0,k}+\mu_{a\gamma}-\sup f',\qquad \forall a\geq 1
\end{equation}

\paragraph*{}
The same remark holds for \autoref{th:FlatnessEstimateRobinCurvatureCoro}, i.e., we can derive a $k$-dimensional estimate involving the $k$-ieth eigenvalue of the Robin-curvature Laplacian. More precisely, define
\begin{equation}\label{ANG_Intro_DefMuGamma_k}
\mu_{\gamma,k}:=\sup\Big\{\inf\{\mathcal{G}_\gamma(\psi),\psi\in E,                                                                                                                                                                                                                                                                                                                                                                                                          \Vert \psi\Vert_{{L}^2}=1\}, E\subset H^1(\Omega), \dim E=k\big\},\quad \forall k\in\mathbb{N}.
\end{equation}
Then, there exists an orthonormal basis $(e_1,\dots,e_n)$ of $\R^n$ such that
$
\mathcal{G}(v_i)\geq \mu_{\gamma,i}\Vert v_i\Vert ^2,
$
and so
\begin{equation}
0\geq \sum\limits_{i=1}^n\mu_{\gamma,i}\Vert \nabla u\cdot e_i\Vert^2-\sup f' \Vert \nabla u\Vert^2.
\end{equation}
In particular, for $u$ a solution of~\eqref{ANG_Intro_EquationSemilineaire} we have that
\begin{equation}
\left(\mu_{\gamma,k}-\mu_{\gamma,1}\right)\frac{\sum\limits_{i=1}^k\Vert \nabla u\cdot e_i\Vert^2}{\Vert \nabla u\Vert^2}\leq \mu_{\gamma,k}-\sup f',
\end{equation}
and so $u$ tends to be $k$-dimensional if $\mu_{\gamma,k}\gg1$.
\end{remark}

\begin{remark}\label{rmk_Flatness_intF}
As already mentionned in the proof of \autoref{th:CoroStable}, from an integration by part, we have that $\int_\Omega f'(u)\vert \nabla u\vert^2=\int_\Omega f(u)^2$.
Hence, the second estimate in \autoref{th:FlatnessEstimateRobinCurvature} holds if we replace $\sup f'$ by $\frac{\int_\Omega f(u)^2}{\Vert \nabla u\Vert^2}$. Namely,
There exists a direction $e\in\mathbb{S}^{n-1}$ such that,
\begin{equation}
(\mu_{\gamma,2}-\mu_{\gamma}) \frac{\Vert \nabla u\cdot e\Vert^2}{\Vert \nabla u\Vert^2}\geq \mu_{\gamma,2}-\frac{\int_\Omega f(u)^2}{\Vert \nabla u\Vert^2},
\end{equation}

The same remark holds for the estimate in \autoref{th:FlatnessEstimateRobinCurvatureCoro}, i.e., there exists a direction $e\in\mathbb{S}^{n-1}$ such that, for all $a\geq1$,
\begin{equation}
(a-1)(\lambda_{0,2}-\lambda_0)\frac{\Vert \nabla u\cdot e\Vert^2}{\Vert \nabla u\Vert^2}\geq (a-1)\lambda_{0,2}+\mu_{a\gamma}-\frac{\int_\Omega f(u)^2}{\Vert \nabla u\Vert^2}.
\end{equation}
\end{remark}

\begin{remark}
In fact, a stronger estimate than~\eqref{FlatnessEstimateRobinCurvature} holds, namely, we can replace $\Vert \nabla u\cdot e\Vert^2$ by $\langle \nabla u\cdot e,\varphi\rangle$, with the normalization $\Vert \varphi\Vert =1$.
Indeed, with the same notations, from the decomposition $L^2(\Omega)=\mathrm{Vect}(\varphi)\oplus\mathrm{Vect}(\varphi)^\perp$, we  write $v_1=a_1\varphi+\psi_1$, with $a_1=\langle v_1,\varphi\rangle$ and $\langle\psi_1,\varphi\rangle=0$. It is then remarkable that $\varphi$ and $\psi_1$ are orthogonal for the quadratic form $\mathcal{F}_\gamma$, namely
\begin{equation}
\mathcal{F}_\gamma(v_1)=a_1^2\mathcal{F}_\gamma(\varphi)+\mathcal{F}_\gamma(\psi_1)\geq a_1^2\lambda_\gamma+\lambda_{\gamma,2}\Vert \psi_1\Vert^2=a_1^2\lambda_\gamma+\lambda_{\gamma,2}\left(\Vert v_1\Vert^2-a_1^2\right).
\end{equation}
Using this inequality in the above proof, we end up with
\begin{equation}
\frac{\int_\Omega \varphi \nabla u\cdot e}{\Vert \nabla u\Vert^2}\geq {\frac{\lambda_2^\gamma}{\lambda_2^\gamma-\lambda_1^\gamma}}.
\end{equation}
If $\lambda_{\gamma,2}\gg1$, the above estimate implies that $u$ must be one-dimensionnal and that $\nabla u\cdot e$ must be a constant multiple of $\varphi$.

The same remark holds for the second estimate in \autoref{th:FlatnessEstimateRobinCurvature} and for \autoref{th:FlatnessEstimateRobinCurvatureCoro}.
\end{remark}

\subsection{Proof of \autoref{th:FlatnessEstimateRobinCurvatureCoro}}

As in the proof of \autoref{th:FlatnessEstimateRobinCurvatureCoro}, we can choose an orthonormal basis such that, for all $i\in\{2,\dots,n\}$, $v_i$ is orthogonal to  the eigenfunction associated with $\mu_{\gamma}$.
Hence,
\begin{equation}\label{Flatness_Inequality_1}
\mathcal{G}_{\gamma}(v_i)\geq
\left\{\begin{aligned}
&\mu_{\gamma,2}\Vert v_i\Vert^2 &&\text{if }i\in\{2,\dots,n\},\\
&\mu_{\gamma}\Vert v_1\Vert^2 &&\text{if }i=1.
\end{aligned}\right.
\end{equation}
From~\eqref{Intermediaire_PourAutrePreuve} with $b=0$ and $\psi=v_i$, we have
\begin{equation}
\mathcal{F}_\gamma(v_i)
= \mathcal{G}_{\gamma}(v_i)- \int_\Omega f'(u)v_i^2.
\end{equation}
Summing the above equality for $i=1,\dots,n$ and using \autoref{Lemma_InegaliteSommeF}, we infer
\begin{equation}
0\geq \sum_{i=1}^n\mathcal{G}_{\gamma}(v_i) -  \sum_{i=1}^n\int_\Omega f'(u)v_i^2.
\end{equation}
Using~\eqref{Flatness_Inequality_1} and that $\vert \nabla u\vert^2=v_1^2+\dots+v_n^2$, we obtain
\begin{equation}
0\geq\mu_{\gamma} \Vert v_1\Vert^2 +\mu_{\gamma,2}\left(\Vert \nabla u\Vert^2-\Vert v_1\Vert^2\right)- \sup f'\Vert\nabla u\Vert^2 .
\end{equation}
We easily conclude from the above inequality.

\begin{remark}\label{Remark_Flatness_General_a}
Using the same method as in the proof of \autoref{th:FlatnessEstimateRobinCurvature}, we can show that, for any $a\geq 1$, there exists a direction $e\in\mathbb{S}^{n-1}$ such that
\begin{equation}
(\mu_{a\gamma,2}-\mu_{a\gamma}) \frac{\Vert \nabla u\cdot e\Vert^2}{\Vert \nabla u\Vert^2}\geq (a-1)\lambda_0+\mu_{a\gamma,2}-\sup f'.
\end{equation}
\end{remark}

\section{Unbounded domains}\label{sec:UnboundedDomains}

\subsection{Proof of \autoref{th:Unbounded_Domains}. First statement}

First, note that~\autoref{LemmeIntermediaireVariations} holds in unbounded domains.
However, it is not the case for \autoref{Lemma_InegaliteSommeF} because $v_i$ may not be integrable and thus the computation of $\mathcal{F}(v_i)$ is not licit.
Let us consider the cut-off function defined in~\eqref{DefCutOff}.
We begin with a lemma which adapts \autoref{Lemma_InegaliteSommeF} to unbounded domains.
\begin{lemma}\label{Lemma_1_Unbounded}
Let $u$ be a solution of~\eqref{EquationSemilineaire_Unbounded}, and set $v_i:=\D_{x_i} u$, for all $i\in\{1,\dots,n\}$. We have that
\begin{equation}\label{InegaliteSommeF_unbounded}
\lambda_\gamma^R \int_\Omega\chi_R^2 \vert \nabla u\vert ^2\leq \sum\limits_{i=1}^n \mathcal{F}_\gamma(\chi_R v_i)\leq \int_\Omega\vert\nabla\chi_R\vert^2\vert\nabla u\vert^2,\qquad \forall R>0,
\end{equation}
with $\mathcal{F}_\gamma$ from~\autoref{Definition_Lambda_Gamma}.
\end{lemma}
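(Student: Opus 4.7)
The proof rests on two ingredients: the variational characterization of $\lambda_\gamma^R$ applied to the compactly supported test function $\chi_R v_i$ (this gives the left inequality), and an explicit computation of $\mathcal{F}_\gamma(\chi_R v_i)$ using the linearized equation $-\Delta v_i = f'(u) v_i$ (this gives the right inequality). The only genuinely new feature compared to the bounded-domain Lemma~\ref{Lemma_InegaliteSommeF} is a residual term coming from the cut-off, which accounts for the possible failure of $v_i$ to be globally integrable.

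For the left inequality, I would start by noting that $\chi_R v_i$ is compactly supported, lies in $H^1(\Omega)$, and vanishes outside a bounded ball, hence is admissible in the variational problem defining $\lambda_\gamma^R$ (after an obvious rescaling of the cut-off so that supports match the ball used in \eqref{DefLambda_R}). By the Rayleigh quotient characterization,
\begin{equation}
\mathcal{F}_\gamma(\chi_R v_i) \;\ge\; \lambda_\gamma^R \int_\Omega \chi_R^2 v_i^2.
\end{equation}
Summing over $i=1,\dots,n$ and using $\sum_i v_i^2 = |\nabla u|^2$ yields the left-hand inequality.

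For the right inequality, the idea is to multiply the linearized equation $-\Delta v_i = f'(u) v_i$ by $\chi_R^2 v_i$ and integrate by parts on $\Omega$ (this is licit because $u\in C^2(\overline{\Omega})$ and $\chi_R$ has compact support), which I expect to produce
\begin{equation}
\int_\Omega \chi_R^2 |\nabla v_i|^2 + 2\int_\Omega \chi_R v_i\,\nabla\chi_R\cdot\nabla v_i - \int_{\D\Omega}\chi_R^2 v_i\,\D_\nu v_i = \int_\Omega f'(u)\chi_R^2 v_i^2.
\end{equation}
Expanding $|\nabla(\chi_R v_i)|^2 = \chi_R^2|\nabla v_i|^2 + 2\chi_R v_i\,\nabla\chi_R\cdot\nabla v_i + v_i^2|\nabla\chi_R|^2$ in the definition of $\mathcal{F}_\gamma(\chi_R v_i)$ and substituting the above identity, the mixed cross-term cancels and leaves
\begin{equation}
\mathcal{F}_\gamma(\chi_R v_i) = \int_\Omega v_i^2|\nabla\chi_R|^2 + \int_{\D\Omega}\chi_R^2 v_i\,\D_\nu v_i + \int_{\D\Omega}\chi_R^2 \gamma v_i^2.
\end{equation}

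Summing over $i$, using $\sum_i v_i\D_\nu v_i = \tfrac{1}{2}\D_\nu|\nabla u|^2$ and $\sum_i v_i^2 = |\nabla u|^2$, the two boundary contributions combine into $\int_{\D\Omega}\chi_R^2\bigl(\tfrac{1}{2}\D_\nu|\nabla u|^2 + \gamma|\nabla u|^2\bigr)$, which is nonpositive by Lemma~\ref{LemmeIntermediaireVariations}. This gives the right-hand inequality $\sum_i \mathcal{F}_\gamma(\chi_R v_i)\le \int_\Omega |\nabla\chi_R|^2|\nabla u|^2$. I do not anticipate any serious obstacle: the structural computation is exactly the one from Lemma~\ref{Lemma_InegaliteSommeF}, and the only care needed is to track the cut-off through the integration by parts, which produces precisely the error term $\int|\nabla\chi_R|^2|\nabla u|^2$, and to verify that $\chi_R v_i$ is an admissible competitor for the truncated functional defining $\lambda_\gamma^R$.
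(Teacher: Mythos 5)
Your proof is correct and follows essentially the same route as the paper: the left inequality from the Rayleigh-quotient characterization of $\lambda_\gamma^R$ applied to $\chi_R v_i$ and the identity $\sum_i v_i^2 = |\nabla u|^2$; the right inequality by multiplying the linearized equation by $\chi_R^2 v_i$, integrating by parts, expanding $|\nabla(\chi_R v_i)|^2$, summing over $i$, and invoking Lemma~\ref{LemmeIntermediaireVariations} to discard the boundary term. (One small remark: you flag, via the parenthetical about rescaling, that $\chi_R v_i$ is supported in $B_{2R}$ rather than $B_R$, so that literally the Rayleigh quotient yields $\lambda_\gamma^{2R}$ rather than $\lambda_\gamma^R$; the paper's proof glosses over this, and it is harmless for the downstream use since only the $R\to\infty$ behavior of $R^2\lambda_\gamma^R$ matters, but it is worth being explicit about which convention is adopted.)
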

\begin{proof}
The first inequality is directly deduced from the definition of $\lambda_\gamma^R$ in~\eqref{DefLambda_R} and the fact that $\vert\nabla u\vert^2=v_1^2+\dots+v_n^2$. Let us focus on the second inequality.
Differentiating \eqref{EquationSemilineaire_Unbounded} with respect to $x_i$, we find that $v_i$ satisfies the linearized equation
\begin{equation}\label{linearization_unbounded}
    -\Delta v_i - f'(u)v_i=0\quad\text{in }\Omega.
\end{equation}
Multiplying by $\chi_R^2v_i$, integrating on $\Omega$ and using the divergence theorem, we find
\begin{equation}
 -\int_{\D\Omega}\chi_R^2v_i\D_\nu v_i+\int_\Omega \nabla\left[\chi_R^2 v_i\right]\cdot\nabla v_i-\int_\Omega f'(u)\chi_R^2v^2_i=0.
\end{equation}
Using $\left\vert\nabla\left[\chi_R v_i\right]\right\vert^2 = \nabla\left[\chi_R^2 v_i\right]\cdot\nabla v_i+\vert\nabla\chi_R\vert^2 v_i^2$ and $v_i\D_\nu v_i= \frac{1}{2}\D_\nu v_i^2$, we derive
\begin{equation}
\int_\Omega \left\vert\nabla\left[\chi_R v_i\right]\right\vert^2-\int_\Omega f'(u)\chi_R^2v^2_i= \int_{\D\Omega}\frac{1}{2}\D_\nu v_i^2\chi_R^2+\int_\Omega\vert\nabla\chi_R\vert^2v_i^2,
\end{equation}
and so
\begin{equation}
\mathcal{F}_\gamma(\chi_R v_i)=  \int_{\D\Omega}\left(\frac{1}{2}\D_\nu v_i^2\chi_R^2+\gamma v_i^2\right)+\int_\Omega\vert\nabla\chi_R\vert^2v_i^2
\end{equation}
Summing the above expression for $i\in\{1,\dots,n\}$ and using that $\vert \nabla u\vert^2=v_1^2+\dots+v_n^2$ gives
\begin{equation}
\sum\limits_{i=1}^n \mathcal{F}_\gamma(v_i)=\int_{\D\Omega}\left(\frac{1}{2}\D_\nu \vert\nabla u\vert^2+\gamma \vert\nabla u\vert^2\right)+\int_\Omega\vert\nabla\chi_R\vert^2\vert\nabla u\vert^2.
\end{equation}
The first term in the right-hand side of the above identity is nonpositive from \autoref{LemmeIntermediaireVariations}, which achieves the proof.
\end{proof}

Let us now prove the first assertion of~\autoref{th:Unbounded_Domains}.
Assume that~\eqref{Robin_Curvature_stability_not_too_degenerate} holds, and assume by contradiction that $u$ is not constant.
On the one hand, from the definition of $\lambda_\gamma^R$ in \eqref{DefLambda_R}, we have
\begin{equation}
\sum\limits_{i=1}^n \mathcal{F}_\gamma(\chi_R v_i)\geq \lambda_\gamma^R \int_\Omega\chi_R^2\vert\nabla u\vert ^2\geq \lambda_\gamma^R \int_{\Omega_R}\vert\nabla u\vert ^2.
\end{equation}
On the other hand, we have that 
\begin{equation}
\int_{\Omega_R}\vert\nabla\chi_R\vert^2\vert\nabla u\vert^2\leq \frac{4}{R^2}\int_{\Omega_{2R}}\vert \nabla u\vert ^2.
\end{equation}
Using the two above inequalities in~\eqref{InegaliteSommeF_unbounded}, we deduce
\begin{equation}\label{Contradiction_Preuve_Unbounded}
\frac{1}{4}R^2\lambda^R_\gamma  \leq \frac{\int_{\Omega_{2R}} \vert \nabla u\vert^2}{\int_{\Omega_R} \vert \nabla u\vert^2}.
\end{equation}
Notice that, since $u$ is not constant, the strong maximum principle implies that $u$ cannot be constant on any open set, and therefore $\int_{\Omega_R} \vert \nabla u\vert^2>0$.
Iterating the above inequality, we find, for $j=1,2,\dots$,
\begin{equation}
\left(\frac{R^2\lambda^R_\gamma}{4}\right)^j  \leq \frac{\int_{\Omega_{2^jR}} \vert \nabla u\vert^2}{\int_{\Omega_R} \vert \nabla u\vert^2}.
\end{equation}
Since $\nabla u$ is bounded and not identically zero, there exists a constant $K$ such that
\begin{equation}
\frac{\int_{\Omega_{2^jR}} \vert \nabla u\vert^2}{\int_{\Omega_R} \vert \nabla u\vert^2}\leq K \left(2^jR\right)^n.
\end{equation}
Therefore,
\begin{equation}
\left(\frac{R^2\lambda^R_\gamma}{2^{n+2}}\right)^j \leq KR^n.
\end{equation}
From~\eqref{Robin_Curvature_stability_not_too_degenerate}, we can choose $R>0$ such that $R^2\lambda^R_\gamma>2^{n+2}$; then, we reach a contradiction letting $j\to+\infty$.

\subsection{Proof of \autoref{th:Unbounded_Domains}. Second and third statements.}

Even if the domain is unbounded, $\lambda_\gamma$ is associated with an eigenfunction $\varphi$ which is positive on $\overline\Omega$ and satisfies
\begin{equation}\label{EquationPrincipalEigenfunction}
\left\{
\begin{aligned}
&-\Delta\varphi-f'(u)\varphi=\lambda_\gamma\varphi &&\text{in }\Omega,\\
&\D_\nu\varphi+\gamma\varphi =0 &&\text{on }\D\Omega.
\end{aligned}
\right.
\end{equation}
The existence of $\varphi$ is proved in~\cite{Rossi2020}. The function $\varphi$ is often refered to as a \emph{generalized} principal eigenfunction, because $\varphi$ might not belong to $H^1(\Omega)$.

If the domain is bounded, it is classical that $\lambda_\gamma$ is a simple eigenvalue of~\eqref{EquationPrincipalEigenfunction}. This property also holds in unbounded domains satisfying~\eqref{GrowthCondition}, as a consequence of the following lemma which is a refinement of~\cite[Theorem~1.7]{Berestycki1997b}.
\begin{lemma}\label{schrodinger2} 
Assume that $\Omega\subset\R^n$ satisfies~\eqref{GrowthCondition} and let $u$ be a Robin-curvature-stable solution of~\eqref{EquationSemilineaire_Unbounded}.
If $v$ is smooth, bounded and satisfies
\begin{equation}\label{LinearEquation}
v\left(-\Delta v-f'(u)v\right)\leq 0,\qquad\text{in }\Omega,
\end{equation}
and
\begin{equation}\label{LinearEquation_Bounary}
v\left(\D_\nu v+\gamma v\right)\leq 0,\qquad \text{on }\D\Omega,
\end{equation}
then $v\equiv C\varphi$ for some constant $C$, where $\varphi$ is a principal eigenfunction associated with $\lambda_\gamma$.
\end{lemma}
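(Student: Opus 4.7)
The plan is to adapt the classical Berestycki-Caffarelli-Nirenberg strategy~\cite{Berestycki1997b} to the Robin-curvature setting. The key device is the substitution $w := v/\varphi$, which is well-defined and smooth since $\varphi > 0$ on $\overline{\Omega}$. A direct computation using~\eqref{EquationPrincipalEigenfunction} produces the conjugation identity
\begin{equation*}
-\mathrm{div}(\varphi^2 \nabla w) + \lambda_\gamma \varphi^2 w = \varphi\bigl(-\Delta v - f'(u) v\bigr) \quad \text{in } \Omega,
\end{equation*}
together with the boundary identity $\varphi^2 \D_\nu w = \varphi(\D_\nu v + \gamma v)$ on $\D\Omega$, obtained by substituting $\D_\nu \varphi = -\gamma \varphi$. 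In this form, the hypotheses~\eqref{LinearEquation}-\eqref{LinearEquation_Bounary} on $v$ translate cleanly into sign information on the data of the conjugated problem.

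Next, I would test this identity against $\chi_R^2 w$ and integrate by parts over $\Omega$, with $\chi_R$ from~\eqref{DefCutOff}. Using $w\varphi = v$, the interior right-hand side becomes $\int_\Omega \chi_R^2 v(-\Delta v - f'(u)v) \leq 0$ by~\eqref{LinearEquation}, while the boundary integral equals $\int_{\D\Omega} \chi_R^2 v(\D_\nu v + \gamma v) \leq 0$ by~\eqref{LinearEquation_Bounary}. Discarding the nonnegative term $\lambda_\gamma \int_\Omega \chi_R^2 w^2 \varphi^2 \geq 0$, which is exactly where the Robin-curvature-stability hypothesis enters, one is left with
\begin{equation*}
\int_\Omega \chi_R^2 \varphi^2 |\nabla w|^2 \leq -2\int_\Omega \chi_R w \varphi^2 \nabla w \cdot \nabla \chi_R.
\end{equation*}
Cauchy-Schwarz and Young's inequality, combined with the bound $w^2 \varphi^2 = v^2 \leq \Vert v \Vert_\infty^2$, yield the weighted Caccioppoli-type estimate
\begin{equation*}
\int_\Omega \chi_R^2 \varphi^2 |\nabla w|^2 \leq 4\Vert v\Vert_\infty^2 \int_\Omega |\nabla \chi_R|^2.
\end{equation*}

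The remaining step is to force the right-hand side to vanish as $R \to \infty$, and this is the main technical obstacle. The linear cut-off~\eqref{DefCutOff} only gives an $O(1)$ bound from~\eqref{GrowthCondition}, which is not enough to conclude. The remedy, inherited from~\cite{Berestycki1997b}, is to replace $\chi_R$ by a logarithmic cut-off $\widetilde{\chi}_R$ equal to $1$ on $B_R$, vanishing outside $B_{R^2}$, and linear in $\log|x|$ on the annulus $B_{R^2} \setminus B_R$; on its support $|\nabla \widetilde{\chi}_R| \leq 1/(|x|\log R)$. An integration by parts in the radial variable, together with $|\Omega \cap B_r| = O(r^2)$ from~\eqref{GrowthCondition}, yields $\int_\Omega |\nabla \widetilde{\chi}_R|^2 = O(1/\log R) \to 0$. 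Passing to the limit $R \to \infty$ and invoking monotone convergence, we deduce $\varphi^2 |\nabla w|^2 \equiv 0$ on $\Omega$; since $\varphi > 0$ everywhere, $w$ must be constant, i.e., $v \equiv C\varphi$ as claimed.
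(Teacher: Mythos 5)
Your argument is correct and reaches the same conclusion, but it closes the final limit step differently from the paper. Both proofs substitute $\sigma = v/\varphi$ (you call it $w$), conjugate the equation, test against $\chi_R^2 \sigma$, and discard the nonnegative terms $\lambda_\gamma\int\chi_R^2 v^2$ and $-\int_{\D\Omega}\chi_R^2 v(\D_\nu v+\gamma v)$ to arrive at the weighted Caccioppoli inequality
$\int_\Omega\chi_R^2\varphi^2|\nabla\sigma|^2 \leq -2\int_\Omega\chi_R\sigma\varphi^2\nabla\chi_R\cdot\nabla\sigma$.
From here the two arguments part ways. You apply Young's inequality, which absorbs the gradient factor on the right and yields $\int_\Omega\chi_R^2\varphi^2|\nabla\sigma|^2 \leq 4\Vert v\Vert_\infty^2\int_\Omega|\nabla\chi_R|^2$; since this bound discards all localization of $\nabla\sigma$, the standard linear cut-off~\eqref{DefCutOff} combined with~\eqref{GrowthCondition} only gives $O(1)$, and you correctly diagnose this and switch to the logarithmic cut-off to improve the bound to $O(1/\log R)\to 0$. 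The paper instead keeps the linear cut-off and applies Cauchy--Schwarz more carefully: since $\nabla\chi_R$ is supported in $\Omega_{2R}\setminus\Omega_R$, the cross term is bounded by $2\sqrt{\smash[b]{\int_{\Omega_{2R}\setminus\Omega_R}\chi_R^2\varphi^2|\nabla\sigma|^2}}\,\sqrt{\smash[b]{\int_\Omega v^2|\nabla\chi_R|^2}}$, where the first factor is controlled by the left-hand side itself. This self-bounding first gives a uniform bound, hence $\int_\Omega\varphi^2|\nabla\sigma|^2<\infty$ by monotone convergence, hence the annular factor tends to zero as the tail of a convergent integral, and the inequality then forces the limit to vanish. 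Both routes are legitimate and close to the original Berestycki--Caffarelli--Nirenberg technique; the paper's version avoids introducing a second cut-off at the cost of a slightly more delicate absorption argument, whereas yours is arguably more transparent but requires the logarithmic cut-off to gain the extra decay that Young's inequality gives away.
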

\begin{proof}
Let us set ${ \sigma=\frac{v}{\varphi}}$ and show that $\sigma$ is constant. Since $v$ satisfies~\eqref{LinearEquation}-\eqref{LinearEquation_Bounary}, a straightforward computation shows that $\sigma$ satisfies
\begin{equation*}
\sigma\varphi\left(\varphi\Delta \sigma+2\nabla\varphi\cdot\nabla\sigma+\sigma\left(\Delta\varphi+f'(u)\varphi\right)\right)\geq0.
\end{equation*}
Using that $\varphi$ satisfies~\eqref{EquationPrincipalEigenfunction} and that $\lambda_\gamma\geq0$, we have
\begin{equation}
\varphi\sigma^2\left(\Delta\varphi+f'(u)\varphi\right)\leq-\lambda_\gamma\varphi^2\sigma^2\leq 0,
\end{equation}
therefore,
$$
\sigma\varphi\left(\varphi\Delta \sigma+2\nabla\varphi\cdot\nabla\sigma\right)\geq0,
$$
which can be rewritten as
$${
\sigma\nabla\cdot(\varphi^2\nabla\sigma)\geq0.
}$$
Multiplying by $\chi_{R}^2$ (defined in \eqref{DefCutOff}), integrating on $\Omega$ and using the divergence theorem, we find
\begin{align*}
0&\leq \int_{\D\Omega}\chi_{R}^2\sigma\varphi^2\D_\nu\sigma-\int_\Omega\varphi^2\nabla\left(\chi_{R}^2\sigma\right)\cdot\nabla\sigma\\
&=\int_{\D\Omega}\chi_{R}^2\sigma\varphi^2\D_\nu\sigma
-\int_\Omega\varphi^2\chi_{R}^2\vert\nabla\sigma\vert^2
-2\int_\Omega\varphi^2\chi_{R}\sigma\nabla\chi_{R}\cdot\nabla\sigma.
\end{align*}
Since $\D_\nu\varphi=-\gamma\varphi$, the boundary term reads
\begin{equation}
\int_{\D\Omega}\chi_{R}^2\sigma\varphi^2\D_\nu\sigma=\int_{\D\Omega}\chi_{R}^2v\left[\D_\nu v-\gamma v\right],
\end{equation}
which is nonpositive from~\eqref{LinearEquation}.
From Cauchy-Schwarz inequality, we deduce
\begin{equation}\label{inequality}
\int_\Omega\chi_{R}^2\varphi^2\vert\nabla\sigma\vert^2
\leq 2\sqrt{\int_{\substack{\Omega_{2R}\backslash\Omega_R}}\chi_{R}^2\varphi^2\vert\nabla\sigma\vert^2}\sqrt{\int_\Omega v^2\vert\nabla\chi_{R}\vert^2},
\end{equation}
where $\Omega_R=\Omega\cap\{\vert x\vert \leq R\}$.

The assumption on the growth of the domain at infinity~\eqref{GrowthCondition} implies
\begin{equation}\label{claimBounded}
\int_\Omega v^2\vert\nabla\chi_{R}\vert^2\text{ is bounded, uniformly in }R\geq1.
\end{equation}
From \eqref{inequality}, we deduce that
$\int_\Omega \chi_{R}^2\varphi^2\vert\nabla\sigma\vert^2$ is uniformly bounded. Hence,
\begin{equation}
\lim\limits_{R\to+\infty}\left[\int_{\substack{\Omega_{2R}\backslash\Omega_R}}\chi_{R}^2\varphi^2\vert\nabla\sigma\vert^2\right] =0.
\end{equation}
At the limit as $R\to+\infty$ in~\eqref{inequality}, we find
${
\int_\Omega\varphi^2\vert\nabla\sigma\vert^2\leq0.
}$
Hence $\nabla\sigma=0$, which achieves the proof.
\end{proof}

The cornerstone of the proof is that $\sigma\nabla\cdot(\varphi^2\nabla\sigma)\geq0$ implies $\nabla \sigma=0$, where $\sigma:=\frac{v}{\varphi}$. The litterature refers to this property as a \emph{Liouville property}. Originally introduced in \cite{Berestycki1997b}, it has been extensively used and discussed~\cite{Barlow2000,Gazzola,Moschini2005,Villegas2020}. For more details, we refer to our previour paper~\cite[Section~3.2]{Nordmann2019a}. 
Note that this is the only step where \eqref{GrowthCondition} is needed. In our context, this condition is essentially optimal, as recently proved by Villegas~\cite{Villegas2020}.

\paragraph*{}
We are now ready to complete the proof of~\autoref{th:Unbounded_Domains}.
\begin{proof}[Proof of \autoref{th:Unbounded_Domains}, second and third statements]
Let $\varphi$ be a principal eigenfunction associated with $\lambda_\gamma$. Let us first prove that $v_i:=\D_{x_i}u$ is a constant multiple of $\varphi$, for all $i=1,\dots,n$. First, since $u$ is bounded, classical global Schauder's estimates guarantee that all the $v_i$ are bounded. 
Then, differentiating \eqref{EquationSemilineaire_Unbounded}, we find that $v_i$ satisfies the first equation in~\eqref{LinearEquation}.
Now, we show that all the $v_i$ satisfy the boundary condition in~\eqref{LinearEquation_Bounary}.
Notice that if $v_i$ indeed satisfies~\eqref{LinearEquation_Bounary}, then \autoref{schrodinger2} implies that $v_i$ is a constant multiple of $\varphi$ and therefore $\D_\nu v_i+\gamma v_i=0$. Hence, we know in general that $v_i\left(\D_\nu v_i+\gamma v_i\right)\geq0$. However, \autoref{Lemma_Geom_Proof_Intermediate} implies
\begin{equation}
\sum\limits_{i=1}^n v_i(\D_\nu v_i+\gamma v_i)\leq 0.
\end{equation}
We deduce that $v_i\left(\D_\nu v_i+\gamma v_i\right)=0$. Applying~\autoref{schrodinger2}, we deduce that $v_i$ is a constant multiple of $\varphi$.

The previous step implies that there exists a constant vector $\mathbf{C}\in\R^n$ such that $\nabla u\equiv \mathbf{C}\varphi$. Hence, $u$ is a planar function. If the domain is not a straight cylinder, there exists $x_0\in\D\Omega$ such that $\nu(x_0)\cdot \mathbf{C}\neq 0.$
Then, the Neumann boundary condition in \eqref{EquationSemilineaire_Unbounded} implies that $\nabla u(x_0)=0$. Since $\varphi>0$ on $\overline{\Omega}$, we deduce that $\mathbf{C}=0$, hence $u$ is constant. It completes the proof of the second statement of \autoref{th:Unbounded_Domains}.

Assume that $\Omega$ is a straight cylinder of the form $\R\times\omega$, $\omega\subset\R^{n-1}$, and that $u$ only depends on $x_1$. Since $u'$ is a constant multiple of $\varphi>0$, $u$ is monotonic.
In addition, $u$ is bounded and therefore has a limit$z^+$ when $x_1\to+\infty$. Setting $u_n(x_1)=u(x_1+n)$ and using classical elliptic estimates, we can extract a subsequence that $C^2_{loc}$-converges to a solution $u_\infty$ of \eqref{EquationSemilineaire_Unbounded} which is Robin-curvature-stable. From $u_\infty\equiv z^+$, we deduce that $z^+$ must be a stable root of $f$. Identically, when $x_1\to-\infty$, $u$ converges to a stable root of $f$, denoted $z^-$. 

If $z^+=z^-$, then $u$ is constant. Let us assume $z^-\neq z^+$,  and fix $M>0$. Multiplying $-u''=f(u)$  by $u'$ and integrating on ${x_1\in[-M,M]}$ gives
\begin{equation}
\frac{1}{2}\left(u'(-M)^2-u'(M)^2\right)=\int_{u(-M)}^{u(M)} f.
\end{equation}
Since $u'(\pm\infty)=0$ (indeed, $u'$ is integrable and $u''$ is bounded), when $M$ goes to $+\infty$ we obtain $\int_{z^-}^{z^+}f=0$. The proof of the third statement of \autoref{th:Unbounded_Domains} is thereby complete.
\end{proof}

\subsection{Proof of \autoref{th:CoroInstablePRELIMINARY_Unbounded}}
Let $u$ be a solution of~\eqref{EquationSemilineaire_Unbounded}.
Notice that~\autoref{th:Lemma_lambdaGammaInequalityBreakDown} remains true when the domain is unbounded. It implies that
\begin{equation}
a \lambda_\gamma \geq (a-1)\lambda_0+\mu_{a\gamma}- \sup f',\qquad \forall a\geq 1.
\end{equation}
If $u$ is not constant, then \autoref{th:Unbounded_Domains} implies that $\lambda_\gamma\leq 0$. Then, the proof of \autoref{th:CoroInstablePRELIMINARY} directly follows from the above inequality.

\subsection{Proof of \autoref{th:CoroStable_Unbounded}}

Let $u$ be a solution of~\eqref{EquationSemilineaire_Hetero_Unbounded} and set $v_i:=\D_{x_i} u$, for all $i\in\{1,\dots,n\}$. 
Differentiating \eqref{EquationSemilineaire_Hetero_Unbounded} with respect to $x_i$, we find that $v_i$ satisfies
\begin{equation}\label{linearization_Hetero_2}
    -\Delta v_i - \D_u f(x,u)v_i- \D_{x_i} f(x,u)=0\quad\text{in }\Omega.
\end{equation}
Multiplying by $\chi_R^2v_i$ (we recall that $\chi_R$ is defined in~\eqref{DefCutOff}) and using the divergence theorem, we find
\begin{equation*}
-\int_{\D\Omega}\chi_R^2 v_i\D_\nu v_i+\int_\Omega\nabla\left[\chi_R^2 v_i\right]\cdot \nabla v_i= \int_\Omega \D_u f(x,u)v_i^2\chi_R^2+\int_\Omega \D_{x_i} f(x,u)v_i\chi_R^2
\end{equation*}
Summing the above equation for $i\in\{1,\dots,n\}$, and using that $\left\vert\nabla\left[\chi_R v_i\right]\right\vert^2 = \nabla\left[\chi_R^2 v_i\right]\cdot\nabla v_i+\vert\nabla\chi_R\vert^2 v_i^2$, that $v_i\D_\nu v_i= \frac{1}{2}\D_\nu v_i^2$ , and that 
$\vert \nabla u\vert^2=v_1^2+\dots+v_n^2$, we obtain
\begin{equation}\label{Preuve_Unbounded_CoroStable_1}
\begin{aligned}
&-\frac{1}{2}\int_{\D\Omega}\chi_R^2 \D_\nu\left[\vert\nabla u\vert^2\right] +\sum\limits_{i=1}^n\int_\Omega \vert\chi_R^2 \nabla v_i\vert ^2
\\
&\qquad =
\int_\Omega \vert\nabla\chi_R\vert^2\vert\nabla u\vert^2
\int_\Omega \chi_R^2\D_u f(x,u)\vert\nabla u\vert^2+\int_\Omega \chi_R^2 \nabla_x f(x,u)\cdot \nabla u.
\end{aligned}
\end{equation}

Let us first estimate the left hand-side of the above expression. We use \autoref{LemmeIntermediaireVariations} to deduce that $\frac{1}{2}\D_\nu\left[\vert\nabla u\vert^2\right]\leq \gamma \vert\nabla u\vert^2=\gamma\sum_{i=1}^n v_i^2$. Therefore, recalling the definitions of $\mu_\gamma$ and $\mathcal{G}_\gamma$ from \autoref{Def:CurvatureGamma}, we have
\begin{equation}\label{Preuve_Unbounded_CoroStable_2}
\begin{aligned}
-\frac{1}{2}\int_{\D\Omega}\chi_R^2\D_\nu\left[\vert\nabla u\vert^2\right] 
+\sum\limits_{i=1}^n\int_\Omega \vert \chi_R^2\nabla v_i\vert ^2
\geq \sum_{i=1}^n \mathcal{G}_\gamma(\chi_R^2v_i)
&\geq \mu_\gamma^R \sum_{i=1}^n \int_\Omega \chi_R^2 v_i^2\\
&= \mu_\gamma^R \int_\Omega\chi_R^2\vert\nabla u\vert^2.
\end{aligned}
\end{equation}

Now, we estimate the right-hand side of~\eqref{Preuve_Unbounded_CoroStable_1} using the divergence theorem and that $u$ is a solution of~\eqref{EquationSemilineaire_Hetero_Unbounded}:
\begin{align*}
&\int_\Omega \vert\nabla\chi_R\vert^2\vert\nabla u\vert^2+
\int_\Omega  \chi_R^2\D_u f(x,u)\vert\nabla u\vert^2dx
+\int_\Omega  \chi_R^2\nabla_x f(x,u)\cdot \nabla u\ dx\\
&\qquad=\int_\Omega \vert\nabla\chi_R\vert^2\vert\nabla u\vert^2+\int_\Omega  \chi_R^2\nabla\big[f(x,u(x))\big]\cdot \nabla u\ dx
\\
&\qquad=\int_\Omega \vert\nabla\chi_R\vert^2\vert\nabla u\vert^2+\int_{\D\Omega} \chi_R^2 f(x,u(x))\D_\nu u-\int_\Omega f(x,u(x)) \nabla\cdot \big[\chi_R^2\nabla  u\big]\ dx
\\
&\qquad=\int_\Omega \vert\nabla\chi_R\vert^2\vert\nabla u\vert^2+ \int_\Omega  \chi_R^2 f(x,u(x))^2dx-2\int_\Omega \chi_R f(x,u)\nabla \chi_R\cdot\nabla u,
\shortintertext{and since $-\nabla \chi_R\cdot\nabla u\leq \vert \nabla \chi_R\vert \vert\nabla u\vert$,}
 &\qquad\leq \int_\Omega\left(\vert \nabla \chi_R\vert \vert\nabla u\vert + f(x,u)\chi_R\right)^2 dx.
\end{align*}
We complete the proof of \autoref{th:CoroStable_Unbounded} by plugging the above inequality and~\eqref{Preuve_Unbounded_CoroStable_2} into~\eqref{Preuve_Unbounded_CoroStable_1}.

\subsection{Proof of \autoref{th:Coro_Estimate_Unbounded_Limsup}}

First, notice that, if $u$ is not constant, the strong maximum principle implies that $u$ is not constant on any open set, and therefore $\int_{\Omega_R} \vert \nabla u\vert^2>0$ for all $R>0$. Hence the expression in \autoref{th:Coro_Estimate_Unbounded_Limsup} is well defined.

From~\eqref{CacciopolyInequality_Unbounded}, we have that
\begin{align*}
&\mu_\gamma^R \int_\Omega\chi_R^2\vert\nabla u\vert^2
\leq 2\int_\Omega\vert \nabla \chi_R\vert^2 \vert\nabla u\vert^2+2\int_\Omega\chi_R^2f(x,u)^2dx.
\end{align*}
To prove \autoref{th:Coro_Estimate_Unbounded_Limsup}, it amounts to show that
\begin{equation}
\liminf\limits_{R\to+\infty}\frac{\int_\Omega\vert \nabla \chi_R\vert^2 \vert\nabla u\vert^2}{\int_\Omega\chi_R^2\vert\nabla u\vert^2}=0.
\end{equation}
By contradiction, assume that the above $\liminf$ is strictly positive. We deduce that there exists $\alpha>0$ such that for all $R>0$ we have
\begin{equation}
\frac{\int_{\Omega_{2R}}\vert\nabla u\vert^2}{\int_{\Omega_R}\vert\nabla u\vert^2}\geq \alpha R^2.
\end{equation}
Here, we have used the definition of $\chi_R$ in~\eqref{DefCutOff}, in particular, that $\vert \nabla \chi_R\vert^2 =O\left(R^{-2}\right)$.

This inequality is similar to~\eqref{Contradiction_Preuve_Unbounded} in the proof of \autoref{th:Unbounded_Domains}. However, we have shown there that \eqref{Contradiction_Preuve_Unbounded} leads to a contradiction. The proof is thereby achieved.

\bibliographystyle{abbrv}
\bibliography{/Users/samuelnordmann/Dropbox/Etudes/Bibliographie/library}
\end{document}